\documentclass[11pt,reqno]{amsart}
\numberwithin{equation}{section}
\usepackage{hyperref}
\hypersetup{nesting=true,debug=true,naturalnames=true}
\usepackage{graphicx,amssymb,upref}
\usepackage[usenames,dvipsnames]{pstricks}
\usepackage{epsfig}
\usepackage{color}
\usepackage[utf8]{inputenc}
\usepackage[T1]{fontenc}
\usepackage{amsmath, amsthm}
\usepackage[english]{babel}
\usepackage{amssymb}
\usepackage{latexsym}
\usepackage{graphicx,amssymb,upref}
\usepackage[all]{xy}
\usepackage{tikz}
\usetikzlibrary{matrix} 
\usepackage{mathtools}
\usepackage{pdfpages}
\usepackage{multirow}
\usepackage{comment}
\allowdisplaybreaks
\usepackage{bm}

\newcommand{\aaa}{\mathfrak{a}}

\date{\today}

\newcounter{blist} 
  \newenvironment{blist}{\begin{list}{(\alph{blist})}{ 
  \usecounter{blist}\leftmargin2.5em\labelwidth2em\labelsep0.5em 
  \topsep0.6ex 
  \parsep0.3ex plus0.2ex minus0.1ex}}{\end{list}} 

\newtheorem{theorem}{Theorem}[section]
\newtheorem{lemma}[theorem]{Lemma}
\newtheorem{proposition}[theorem]{Proposition}
\newtheorem{corollary}[theorem]{Corollary}
\theoremstyle{definition}
\newtheorem{definition}[theorem]{Definition}
\newtheorem{example}[theorem]{Example}

\newtheorem{remark}[theorem]{Remark}

\def\Tan#1#2{T_{#1}{#2}}
\newcommand{\np}{\raisebox{0.2ex}{\scalebox{0.6}{$\,\bullet\,$}}}

\newcommand{\paff}{\mathfrak{p}}

\begin{document}

\title[Affinization: Poisson algebras]{Affinization of algebraic structures:  Poisson algebras}

\author[Brzezi\'nski]{Tomasz Brzezi\'nski}

\address{
Department of Mathematics, Swansea University,
Fabian Way,
  Swansea SA1 8EN, U.K.\ \newline \indent
Faculty of Mathematics, University of Bia{\l}ystok, K.\ Cio{\l}kowskiego  1M,
15-245 Bia\-{\l}ys\-tok, Poland}

\email{T.Brzezinski@swansea.ac.uk}

\author[Radziszewski]{Krzysztof Radziszewski}

\address{Doctoral School \&
Faculty of Mathematics, University of Bia{\l}ystok, K.\ Cio{\l}kowskiego,
15-245 Bia\-{\l}ys\-tok, Poland}

\email{K.Radziszewski@uwb.edu.pl}

\author[Ramos Pérez]{Brais Ramos Pérez}

\address{CITMAga, 15782 Santiago de Compostela, Spain.\newline \indent
 Faculty of Mathematics, Department of Mathematics, University of Santiago de Compostela, 15782 Santiago de Compostela, Spain.}

 \email{braisramos.perez@usc.es}
\thanks{\emph{Corresponding author.} Brais Ramos Pérez, \texttt{braisramos.perez@usc.es}}

\begin{abstract}
An affinization of the notion of a Poisson algebra is presented.  This is termed a Poisson affgebra and consists of an affine space together with an associative bi-affine multiplication and a bi-affine Lie bracket  that acts as an affine derivation for the associative product. The constructive relation between Poisson affgebras and Poisson algebras is described and several low-dimensional examples are studied in detail. 
\end{abstract} 

\vspace{0.2cm}

\keywords{Poisson algebra; heap; affine space; deriffation; Poisson affgebra.}

\subjclass[2020]{14R10, 17B63, 20N10}

\maketitle
%-------------------------------------------------------------------------
\section{Introduction}\label{sec.intro}
The recently formalised \textit{affinization programme of algebraic structures} has two guiding objectives. The first one is to find a common framework that connects rings with braces introduced in \cite{Rum:bra}, \cite{CedJes:bra} as tools for solving and classifying solutions of the Yang-Baxter equation; see \cite{Brz:tru}. The second one is to establish an \textit{absolute} (i.e.\ vector space independent) algebraic scheme for the Tulczyjew programme of a frame-independent formulation of analytical mechanics \cite{Tul:fra}. The present text is intended as a contribution to this aim.

The Tulczyjew programme involves replacing vector by affine spaces (hence, the \textit{affinization}). Its geometric side is covered comprehensively by the \textit{AV-geometry}, see e.g.\ \cite{GraGra:av1}, \cite{GraGra:av2}, although this approach still relies on the vector space dependent definition of an affine space. Likewise, the notion of a \textit{Lie affgebra} that arose through the AV-geometry \cite{GGU} relies on the globally fixed choice of a vector space. That vector space independent formulation and indeed generalisation of Lie affgebras has been shown in \cite{BRZ3}. In this article we intend to develop vector space independent or absolute framework for Poisson structures on affine spaces or to define and study \textit{Poisson affgebras}.

The paper is organized as follows. In Section~\ref{sec.pre} we recall algebraic structures which are necessary for defining Poisson affgebras. These include in particular heaps and the absolute formulation of affine spaces in Section~\ref{ssec.heap}, Lie affgebras and their relation to Lie algebras in Section~\ref{ssec.Lie}, and associative affgebras and their relation to extensions of associative algebras by homothetic data in Section~\ref{ssec.ass}. This part is a review of known results except for the introduction of the concept of a \textbf{quasi-commutative affgebra}, as an affgebra that captures commutativity of the product of its algebra fibres.

As the compatibility between the Lie and associative products in a Poisson algebra is provided by the condition that the former is a derivation of the latter, in Section~\ref{sec.der} we propose an affinization of derivations, introducing the notion of an \textbf{affine derivation} or a \textbf{deriffation} on an associative affgebra in Definition~\ref{def.deriffation}. This is designed in such a way that its linearization to any fibre is a derivation of the algebra product of this fibre. 

Section~\ref{sec.Poisson} contains the main results of the paper. Specifically, the notion of a \textbf{Poisson affgebra} is introduced and its relation to Poisson algebras is studied. In particular it is shown in Theorem~\ref{main.th.poisson} that any Poisson affgebra arises from a Poisson algebra with a homothetic datum (that controls the affinization of its associative product) and a generalized derivation of its underlying Lie algebra structure whose linear endomorphisms involved are derivations of the associative product. We illustrate this relation by the example of the Darboux-type Poisson bracket on the algebra of real polynomials in two variables.   The connection between Poisson algebras and affgebras allows one to relate homomorphisms of Poisson affgebras with suitable homomorphisms of Poisson algebras (see Theorem~\ref{hom.Poisson.aff}) thus allowing for potential classification of Poisson affgebras (relative to the classification of Poisson algebras). 

Further examples and classification of Poisson affgebras are presented in Section~\ref{sec.ex}. In particular, in Section~\ref{clas1dim} we describe the classification of one-dimensional Poisson affgebras arriving at five families of such affgebras. In Section~\ref{ssec.2-dim} we look at the classification of two-dimensional Poisson affgebras, illustrating it by extensive studies of an example in which the linear Poisson bracket is trivial and an example with a non-trivial linear Poisson bracket.

Throughout  the text, $\mathbb{F}$ denotes a generic field.

\section{Preliminaries}\label{sec.pre}
In this section we present the necessary preliminary notions. First we recall the notion of a Poisson algebra. As it combines a Lie and an associative structure on a common vector space, in the following subsections we recall the vector space independent definition of an affine space and then discuss Lie brackets and associative multiplications on such absolute affine spaces.
\subsection{Poisson algebras}
The following notion was first introduced in \cite{VinKra:Ham}, \cite{Lic:var} and \cite{Bra:alg}.
\begin{definition}\label{def.Poi.alg}
A \textbf{Poisson algebra} over $\mathbb{F}$ is a $\mathbb{F}$-vector space $P$ equipped with two bilinear operations $[-,-]\colon P\times P\rightarrow P$ and $-\cdot- \colon P\times P\rightarrow P$ such that:
\begin{enumerate}
    \item[(i)] $(P,[-,-])$ is a Lie algebra,
    \item[(ii)] $(P, \cdot)$ is an associative algebra,
\end{enumerate}
and the following condition is satisfied
\begin{equation}\label{Leib.rule}
    [x\cdot y,z]=x\cdot [y,z]+[x,z]\cdot y \quad \textnormal{(the Leibniz rule)}
\end{equation}
    for all $x,y,z\in P$. In other words, \eqref{Leib.rule} is equivalent to the fact that the maps $[-,z]\colon P\rightarrow P$ are derivations on $(P, \cdot)$ for all $z\in P$. It is worth noting that the antisymmetry of the Lie bracket $[-,-]$ implies that the maps $[z,-]$ are also derivations on $(P,\cdot)$ for all $z\in P$. 
    
    An $\mathbb{F}$-linear map $f\colon P\rightarrow P'$ is a {\bf morphism of Poisson algebras} if 
    \begin{gather*}%\label{morPois.alg}
        f(x\cdot y)=f(x)\cdot'f(y),\qquad f([x,y])=[f(x),f(y)]'
    \end{gather*}
    for all $x,y\in P$ or, in other words, if $f$ preserves the associative product and the Lie bracket. Let us denote by ${\sf Poiss}$ the category of Poisson algebras.
\end{definition}

Poisson algebras naturally occur in Hamiltonian mechanics, from which they originated (see \cite{VinKra:Ham}, \cite{Lic:var}).  A prototypical example is provided by the algebra of smooth functions $C^{\infty}(\mathbb{R}^{2n})$, which is an associative algebra with the pointwise product, and a Lie algebra with the bracket defined as follows:
\begin{gather}\label{darboux}[f,g]=\sum_{i=1}^{n}\left(\frac{\partial f}{\partial x_{i}}\frac{\partial g}{\partial y_{i}}-\frac{\partial f}{\partial y_{i}}\frac{\partial g}{\partial x_{i}}\right),\end{gather}
known as the \textit{Darboux bracket}. More generally, the algebra of smooth functions on a cotangent bundle $T^*M$  of a smooth manifold $M$ (i.e.\ the algebra of smooth functions on a phase space for the motion of a particle on the manifold $M$) is a Poisson algebra with the bracket that is the Darboux bracket in local coordinates. Note that $C^{\infty}(T^*M)$ is a commutative algebra, and often authors include the commutativity requirement in the definition of a Poisson algebra. In Definition~\ref{def.Poi.alg} the commutativity of multiplication is not assumed in order to maintain full algebraic generality.

\subsection{Heaps and affine spaces }\label{ssec.heap}
The following notion was introduced in \cite{Pru:the} and \cite{Bae:ein}.
\begin{definition}
A \textbf{heap} is a set $X$ together with a ternary operation $\langle-,-,-\rangle\colon X\times X\times X\rightarrow X$ which satisfies the following conditions:
\begin{subequations}\label{heapcond}
\begin{align}\label{h_asoc}
\langle\langle a,b,c\rangle,d,e\rangle= \langle a,b,\langle c,d,e\rangle\rangle\quad&\textnormal{(associativity),}\\
\label{h_Malcev1}\langle a,b,b\rangle=a \quad& \textnormal{(Mal'cev identity 1),}\\
\label{h_Malcev2}\langle a,a,b\rangle=b \quad&\textnormal{(Mal'cev identity 2),}
\end{align}
\end{subequations}
for all $a,b,c,d,e\in X$.

Let $(X,\langle -,-,-\rangle_{X})$ and $(Y,\langle -,-,-\rangle_{Y})$ be heaps. A map $f\colon X\rightarrow Y$ is called a \textbf{heap morphism} if $f$ preserves the heap ternary operation, i.e., $f(\langle a,b,c\rangle_{X})=\langle f(a),f(b),f(c)\rangle_{Y}$ for all $a,b,c\in X$.

 A heap $(X,\langle-,-,-\rangle)$ is said to be \textbf{abelian} when
\begin{gather}\label{h_ab}
    \langle a,b,c\rangle=\langle c,b,a\rangle
\end{gather}
for all $a,b,c\in X$. Under this property, \eqref{h_Malcev1} is equivalent to \eqref{h_Malcev2}.
\end{definition}
As a consequence of conditions \eqref{heapcond} and \eqref{h_ab}, when a heap is abelian the distribution of the angled bracket does not matter, and hence we write $\langle a,b,c,d,e\rangle  $ for $\langle\langle a,b,c\rangle,d,e\rangle $. All heaps appearing in this paper are assumed to be abelian.

It is worth nothing that heaps are strongly related with groups. By fixing an element $o$ in a non-empty abelian heap $X$, the binary operation $x+y\coloneqq \langle x,o,y\rangle$ induces an abelian group structure on $X$, called the \textbf{retract} of $X$ at $o$ and denoted by $X_{o}$. All the retracts at different points of a given heap are mutually isomorphic by the {\it translation} map $\tau_{o}^{e}(a)\coloneqq \langle a,o,e\rangle\colon X_{o}\rightarrow X_{e}$. In the opposite direction, any abelian group $(G,+)$ determines a unique abelian heap structure on $G$ with the operation $\langle a,b,c\rangle\coloneqq a-b+c$. Every morphism of groups is also a morphism between the corresponding
heaps, and a heap morphism $f\colon X\rightarrow Y$ uniquely defines a morphism of groups between the respective retracts $X_o\to Y_e$ by the formula $x \mapsto f(x)-f(o)$.

In the subsequent definition, following \cite{BreBrz:hea} we recall the vector-free formulation of an affine space (see also \cite{Ost: aff}).
\begin{definition}\label{affspace}
An \textbf{$\mathbb{F}$-affine space} (or simply, \textbf{affine space}) is a triple $(\mathfrak{a},\langle -,-,-\rangle, -\triangleright_{-}-)$, where $(\mathfrak{a},\langle -,-,-\rangle)$ is a non-empty abelian heap and $-\triangleright_{-}-$ is a ternary map:
\begin{align*}
-\triangleright_{-}-\colon \mathbb{F}\times\mathfrak{a}\times\mathfrak{a}&\rightarrow\mathfrak{a}\\(\alpha,a,b)&\mapsto \alpha\triangleright_{a}b,
\end{align*}
satisfying the following conditions:
\begin{enumerate}
\item[(i)] For all $\alpha\in\mathbb{F}$ and $a\in \mathfrak{a}$, the map $\alpha\triangleright_{a}-\colon\mathfrak{a}\rightarrow \mathfrak{a}$ is a heap morphism.
\item[(ii)] For all $a,b\in\mathfrak{a}$, the map $-\triangleright_{a}b\colon\mathbb{F}\rightarrow\mathfrak{a}$ is a heap morphism considering that the heap ternary operation in $\mathbb{F}$ is given by $\langle\alpha,\beta,\gamma\rangle_{\mathbb{F}}\coloneqq \alpha-\beta+\gamma$ for all $\alpha,\beta,\gamma\in\mathbb{F}$.
\item[(iii)] For all $a\in\mathfrak{a}$, the map $-\triangleright_{a}-\colon\mathbb{F}\times\mathfrak{a}\rightarrow \mathfrak{a}$ is a left action of the multiplicative monoid $\mathbb{F}$, i.e., $1_{\mathbb{F}}\triangleright_{a}b=b$ and $\alpha\triangleright_{a}(\beta\triangleright_{a}b)=(\alpha\beta)\triangleright_{a}b$, for all $\alpha,\beta\in\mathbb{F}$ and for all $b\in\mathfrak{a}$.
\item[(iv)] For all $a,b\in\mathfrak{a}$, $0_{\mathbb{F}}\triangleright_{a}b=a$.
\item[(v)] (\textbf{Base change property})
\[\alpha\triangleright_{a}b=\langle \alpha\triangleright_{c}b,\alpha\triangleright_{c}a,a\rangle\]
for all $\alpha\in\mathbb{F}$ and for all $a,b,c\in \mathfrak{a}$.
\end{enumerate}

Let $(\mathfrak{a},\langle -,-,-\rangle_{\mathfrak{a}}, -\triangleright^{\mathfrak{a}}_{-}-)$ and $(\mathfrak{b},\langle -,-,-\rangle_{\mathfrak{b}}, -\triangleright^{\mathfrak{b}}_{-}-)$ be affine spaces. A map $f\colon \mathfrak{a}\rightarrow\mathfrak{b}$ is an \textbf{affine morphism} if $f$ is a heap morphism and also the equality
\begin{equation*}%\label{affmap_cond}
f(\alpha\triangleright^{\mathfrak{a}}_{a}b)=\alpha\triangleright^{\mathfrak{b}}_{f(a)}f(b)
\end{equation*}
holds, for all $a,b\in\mathfrak{a}$ and for all $\alpha\in\mathbb{F}$.
\end{definition}
%\begin{remark}\label{morph_aff_lin}
Let $(\mathfrak{a},\langle -,-,-\rangle, -\triangleright_{-}-)$ be an affine space and let us fix $o\in\mathfrak{a}$. The retract of $\mathfrak{a}$ at $o$, $\mathfrak{a}_{o}$, is an $\mathbb{F}$-vector space with the external operation
\[\alpha a\coloneqq\alpha\triangleright_{o}a,\]
for all $\alpha\in\mathbb{F}$ and for all $a\in\mathfrak{a}.$ This vector space is called the \textbf{tangent space} of $\mathfrak{a}$ at $o$ (or simply, the \textbf{fibre} of $\mathfrak{a}$ at $o$),  and it is denoted by $T_{o}\mathfrak{a}$.
In terms of the external operation in $T_{o}\mathfrak{a}$, the operation $-\triangleright_{-}-$ is given by
\begin{equation*}%\label{relation_external_triangle}
\alpha\triangleright_{a}b=(1-\alpha)a+\alpha b.
\end{equation*}

Moreover, $T_{o}\mathfrak{a}$ acts freely and transitively over $\mathfrak{a}$ via the following action
\begin{align*}
\psi\colon \mathfrak{a}\times T_{o}\mathfrak{a}\rightarrow\mathfrak{a}\qquad (a,b)\mapsto \psi(a,b)\coloneqq \langle a,o,b\rangle.
\end{align*}

Besides, fibres of $\mathfrak{a}$ at different points are mutually isomorphic (as vector spaces) \cite[Remark~2.5]{BRZ2} because, in this situation, the translation isomorphism $\tau_{o}^{e}$ is a linear map. Therefore, the {\bf dimension} of an affine space $\aaa$ is well-defined as the dimension (as a vector space) of any of its fibres:
\[\operatorname{dim}(\aaa)\coloneqq \operatorname{dim}_{\mathbb{F}}(T_{o}\aaa)\]
for any $o\in\aaa$.

In addition, if $f\colon \mathfrak{a}\rightarrow \mathfrak{b}$ is an affine morphism, then it induces the unique linear map $\overrightarrow{f}$ between $T_{o}\mathfrak{a}$ and $T_{u}\mathfrak{b}$ defined by $\overrightarrow{f}(a)\coloneqq f(a)-f(o) = \langle f(a),f(o),u\rangle$, for all $a\in\mathfrak{a}$. 
Conversely, if $\widehat{f}\colon T_{o}\mathfrak{a}\rightarrow T_{u}\mathfrak{b}$ is a linear map and we fix an element $b\in\mathfrak{b}$, then the map 
    \begin{align*}
f\colon \mathfrak{a}\rightarrow\mathfrak{b}\qquad a\mapsto f(a)\coloneqq \widehat{f}(a)+b=\langle \widehat{f}(a),u,b\rangle
\end{align*}
is an affine map. All affine maps from $\mathfrak{a}$ to $\mathfrak{b}$ arise in this way. Note that $b = f(o)$.

To make a connection with the traditional definition of an affine space as a set $\mathfrak{a}$ with a free and transitive action of a vector space $\overrightarrow{\mathfrak{a}}$, we can set explicitly $\overrightarrow{\mathfrak{a}}= \Tan{o}{\mathfrak{a}}$ (or any other vector space isomorphic to a fibre of $\mathfrak{a}$). The action is then given by $a+v \coloneqq \langle a,o,v\rangle$, for all $a\in \mathfrak{a}$ and $v\in T_o\mathfrak{a}$, and the vector from $a$ to $b$ in $\overrightarrow{\mathfrak{a}}$ is then $\overrightarrow{ab}\coloneqq\langle o,a,b\rangle$.
%\end{remark}
\begin{definition}
    An affine space $\mathfrak{a}=(\mathfrak{a},\mu_{\mathfrak{a}})$ is said to be an {\bf affgebra} if $\mathfrak{a}$ is endowed with a bi-affine morphism $\mu_{\mathfrak{a}}\colon\mathfrak{a}\times\mathfrak{a}\rightarrow\mathfrak{a}$, which is referred to as the affine product. 

    Since $\mu_{\mathfrak{a}}(a,-)$ and $\mu_{\mathfrak{a}}(-,a)$ are heap morphisms for all $a\in \mathfrak{a}$, the equalities
    \begin{subequations}\label{truss}
        \begin{gather}
        \label{truss1}\mu_{\mathfrak{a}}(a,\langle x,y,z\rangle)=\langle\mu_{\mathfrak{a}}(a,x),\mu_{\mathfrak{a}}(a,y),\mu_{\mathfrak{a}}(a,z)\rangle,\\
        \label{truss2}\mu_{\mathfrak{a}}(\langle x,y,z\rangle,a)=\langle\mu_{\mathfrak{a}}(x,a),\mu_{\mathfrak{a}}(y,a),\mu_{\mathfrak{a}}(z,a)\rangle
    \end{gather}
    \end{subequations}
    hold for all $x,y,z\in\mathfrak{a}$. Therefore, equalities \eqref{truss} mean that any affgebra $\mathfrak{a}$ leads to a \textbf{truss} \cite{BRZ1}.
\end{definition}
\subsection{Lie affgebras}\label{ssec.Lie}
The following definition was proposed in \cite{BRZ3} as a homogeneous version of the notion defined in \cite{BRZ2}, which in turn was a vector space free or absolute formulation and extension of that of \cite{GGU}.
\begin{definition}
    Let $\aaa$ be an affgebra with affine product $\{-,-\}\colon\aaa\times\aaa\rightarrow\aaa$. If $\{-,-\}$   satisfies the following conditions:
    \begin{blist}
    \item \textit{Affine antisymmetry}: for all $a,b\in \aaa$,
    \begin{equation*}%\label{antisym}
        \langle \{a,b\},\{a,a\}, \{b,a\}\rangle = \{b,b\};
    \end{equation*}
    \item \textit{Affine Jacobi identity}, that is, for all $a,b,c\in \aaa$,
    \begin{equation*}%\label{Jacobi}
       \langle \{a,\{b,c\}\},\{a,\{a,a\}\}, \{b,\{c,a\}\}, \{b,\{b,b\}\}, \{c,\{a,b\}\} \rangle = \{c,\{c,c\}\}
    \end{equation*}
    \end{blist}
    then $(\aaa,\{-,-\})$ is called a \textbf{Lie affgebra}.
    The affine product $\{-,-\}$ in a Lie affgebra is usually named an \textbf{affine Lie bracket}.
\end{definition}
In \cite{BRZ3} the connection between these structures and usual Lie algebras was described in detail. For our further considerations, the most relevant results will be those contained in \cite[Theorem~2.6]{BRZ3} and \cite[Theorem~3.1]{BRZ3}. The former shows that, for any Lie affgebra $(\aaa,\{-,-\})$, the underlying vector space $T_o\aaa$ is a Lie algebra with the Lie bracket defined by
\begin{equation}\label{linearLiebr}
    [a,b]\coloneqq\{a,b\}-\{a,o\}+\{o,o\}-\{o,b\}=\langle\{a,b\},\{a,o\},\{o,o\},\{o,b\},o\rangle,
\end{equation}
that is, the product $[-,-]$ defined above is the linearization of the bi-affine product $\{-,-\}$ at the point $o\in \aaa$. Meanwhile, the latter serves us a receipt on how to construct a Lie affgebra from a given Lie algebra, say $(\mathfrak{g},[-,-])$, using an element $t\in \mathfrak{g}$ and a generalized derivation $(\lambda,-\mu,\lambda)$ of $\mathfrak{g}$ in the sense of \cite{LL}. More explicitly, 
\begin{equation}\label{affine Lie bracket}
    \{a,b\}\coloneqq[a,b]+\mu(a)+\lambda(b)+t
\end{equation}
is an affine Lie bracket on $\mathfrak{g}$ if and only if the linear maps $\lambda,\mu\in\operatorname{Lin}(\mathfrak{g})$ satisfy that
\begin{equation*}
    %\label{gen_der}
    [\lambda(a),b]-[a,\mu(b)]=\lambda([a,b])
\end{equation*}
for all $a,b\in\mathfrak{g}$. What is more, every Lie affgebra arises in this way.

\subsection{Extension of an algebra by a homothetic datum. Associative affgebras}\label{ssec.ass}
In this section we recall the affinization of associative algebras from \cite{AndBrzRyb:ext}.
\begin{definition}\label{def.ass.aff}
An \textbf{associative affgebra} is an affgebra $(\mathfrak{a},\np)$ whose affine product $\np$ is associative, i.e., $(a\np b)\np c=a\np(b\np c)$ for all $a,b,c\in\mathfrak{a}$.

An associative affgebra $\mathfrak{a}$ is said to be \textbf{quasi-commutative} if, for all $a,b,c\in \mathfrak{a}$,
\begin{equation}\label{quasi-com}
 \langle a\np c,a\np b,c\np b\rangle = \langle b\np c,b\np a,c\np a\rangle.  
\end{equation}
\end{definition}
Associative affgebras are a good affinization of associative algebras. This is justified by the following result, whose proof is similar to the one given in \cite[Theorem~4.3]{AndBrzRyb:ext} in the context of rings. 
\begin{proposition}\label{prop.tan.ass}
    For any element $o$ of an associative affgebra $(\mathfrak{a},\np)$, the tangent space $T_o\mathfrak{a}$ is an associative algebra with product $\cdot_o$ (denoted by juxtaposition)
    \begin{gather}\label{ass.prod.tang}
    ab = a\np b - a\np o +o\np o - o\np b=\langle a\np b, a\np o, o\np o, o\np b,o \rangle,
    \end{gather}
    for all $a,b\in \mathfrak{a}$. Furthermore, for all $e\in \mathfrak{a}$, $(T_o\mathfrak{a},\cdot_o) \cong  (T_e\mathfrak{a},\cdot_e)$ via the translation map.

    If, in addition, $\mathfrak{a}$ is quasi-commutative, then $(T_o\mathfrak{a},\cdot_o)$ is commutative.
\end{proposition}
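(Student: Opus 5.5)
The plan is to work entirely in the vector space $T_o\mathfrak{a}$. There the heap operation reads $\langle x,y,z\rangle = x-y+z$, and bi-affinity of $\np$ means that for each fixed $a$ the maps $a\np -$ and $-\np a$ are affine. Bi-affinity gives a decomposition
\[
a\np b = ab + \lambda(b) + \mu(a) + t, \qquad t = o\np o,
\]
where $\lambda(b)=o\np b - o\np o$ and $\mu(a)=a\np o-o\np o$ are linear maps on $T_o\mathfrak{a}$, and $ab$ is the product \eqref{ass.prod.tang}. Bilinearity of $ab$ is the standard fact that the linearization of a bi-affine map at $o$ is bilinear. To check it, note that $b\mapsto a\np b - a\np o$ is the linear part of the affine map $a\np-$. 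Subtracting $o\np b - o\np o$ leaves an expression that is linear in $b$, and by symmetry it is also linear in $a$. Compatibility with scalars follows from $f(\alpha\triangleright_o b)=\alpha\triangleright_{f(o)}f(b)$ for affine $f$.

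For associativity, I will expand both $(a\np b)\np c$ and $a\np(b\np c)$ using the decomposition and compare components by degree. The trilinear terms give $(ab)c$ on one side and $a(bc)$ on the other. The remaining terms are bilinear, linear and constant. They involve $\lambda$, $\mu$ and $t$, for example $\mu(a)c$, $\lambda(ab)$, $a\lambda(b)$, $\mu(a)\,c$ and $t c$. The cleanest way to isolate the trilinear part is not to expand everything. Instead, I use the fact that for a tri-affine map $F(a,b,c)$ on a vector space, its trilinear part is the alternating sum
\[
\sum_{\epsilon\in\{0,1\}^3}(-1)^{3-|\epsilon|}F(\epsilon_1 a,\epsilon_2 b,\epsilon_3 c),
\]
where the arguments are evaluated at $a$ or at $o=0$. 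Both $(a\np b)\np c$ and $a\np(b\np c)$ are tri-affine, and they are equal, so their trilinear parts coincide. It remains to check that the trilinear part of $(a\np b)\np c$ is $(ab)c$. Since $(x,c)\mapsto x\np c$ has bilinear part $xc$ and $x=a\np b$ has bilinear part $ab$, only the term $(ab)c$ is trilinear in the expansion. Every other term has lower degree in at least one variable, because $\lambda,\mu$ are linear and $t$ is constant. The same argument gives $a(bc)$ for the other side. This step is the main bookkeeping point, and the alternating-sum formulation avoids writing out all the lower-order terms.

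For the isomorphism claim, the translation $\tau_o^e(a)=a-o+e$ is a linear isomorphism $T_o\mathfrak{a}\to T_e\mathfrak{a}$. I will show directly that
\[
\tau_o^e(ab)=\langle a\np b,a\np o,o\np o,o\np b,e\rangle
\]
equals the $\cdot_e$-product of $\tau_o^e(a)=\langle a,o,e\rangle$ and $\tau_o^e(b)$. Equivalently, in the fixed vector space $T_o\mathfrak{a}$, the bilinear part of $\np$ computed at the base point $e$ agrees with the one computed at $o$, after accounting for the shift of zero. The expression
\[
(a+e)\np(b+e)-(a+e)\np e + e\np e - e\np(b+e)
\]
written in the $o$-decomposition collapses to $ab$ plus $e$-shifts. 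The linear and constant terms cancel because each appears with alternating signs, and the cross terms $ae$, $eb$, $ee$ cancel as well.

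Finally, for quasi-commutativity, I will rewrite \eqref{quasi-com} in $T_o\mathfrak{a}$ as
\[
a\np c - a\np b + c\np b = b\np c - b\np a + c\np a.
\]
Substituting the decomposition, the constants cancel, since each side has $t-t+t$. The linear terms cancel too: the left side contributes $\mu(a)+\lambda(c)-\mu(a)-\lambda(b)+\mu(c)+\lambda(b)=\mu(c)+\lambda(c)$, and the right side gives the same. What remains is $ac-ab+cb=bc-ba+ca$. Setting $c=o=0$ gives $-ab=-ba$, which is commutativity.
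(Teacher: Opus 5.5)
Your proof is correct, but it takes a different route from the paper. The paper does not argue the first two claims directly. It imports associativity of $\cdot_o$ and the translation isomorphism from the corresponding result for trusses (Theorem~4.3 of the cited work on extensions by homothetic data). For the last claim it simply observes that \eqref{quasi-com} with $c=o$, written out in heap terms, is literally the identity $a\cdot_o b=b\cdot_o a$.

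Your argument is self-contained instead:
\begin{itemize}
\item Your decomposition $a\np b=ab+\mu(a)+\lambda(b)+t$ is exactly $a\np b=ab+a\sigma_o+\sigma_o b+s_o$, with $(\sigma_o,s_o)$ as in \eqref{hom.o}.
\item Comparing only the trilinear components of the two tri-affine maps $(a\np b)\np c$ and $a\np(b\np c)$ yields associativity without touching the bimultiplication and homothetism identities. Those identities live precisely in the lower-degree components you discard. Comparing them would show that $(\sigma_o,s_o)$ is a homothetic datum, i.e.\ the converse half of the correspondence recalled after Proposition~\ref{affgebra_by_homothety}.
\item You make the $\mathbb{F}$-bilinearity of $\cdot_o$ explicit through linear parts of affine maps. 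The paper leaves this implicit, since the cited result is over $\mathbb{Z}$.
\item Your commutativity computation proves more than is asked. Because the linear and constant parts cancel identically, \eqref{quasi-com} is equivalent to $ac-ab+cb=bc-ba+ca$ for all $a,b,c$, which in turn is equivalent to commutativity of $\cdot_o$. So quasi-commutativity is exactly fibrewise commutativity. For the stated direction, however, the paper's one-line substitution $c=o$ needs no decomposition at all.
\end{itemize}

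Two cosmetic points. In the translation step, your four-term expression equals exactly $ab$ (not ``$ab$ plus $e$-shifts''); it is the fifth entry $e$ of the heap bracket defining $\cdot_e$ that produces $ab+e=\tau_o^e(ab)$. Among your sample lower-order terms, $\lambda(ab)$ and $a\lambda(b)$ should read $\mu(ab)$ and $a\lambda(c)$; this does not affect the argument.
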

\begin{proof}
    The first two statements are proven in \cite[Theorem~4.3]{AndBrzRyb:ext}. Written in terms of the heap operation, the commutativity of the product $\cdot_o$ is equivalent to \eqref{quasi-com} with $c=o$. 
    %Since all the fibre algebras are mutually isomorphic \eqref{quasi-com} is satisfied for all $c=o$.
\end{proof}
\begin{definition}\label{Def: dp}
Let $A$ be an associative algebra. A {\bf double operator} $\sigma=(\overleftarrow{\sigma},\overrightarrow{\sigma})$ on $A$ is a pair of linear endomorphisms:
\begin{align*}
    \overleftarrow{\sigma}\colon A\rightarrow A, \qquad a \mapsto \overleftarrow{\sigma}(a)\coloneqq a\sigma,\\
    \overrightarrow{\sigma}\colon A\rightarrow A, \qquad a \mapsto \overrightarrow{\sigma}(a)\coloneqq \sigma a.
\end{align*}

The double operator $\sigma$ is called a \textbf{bimultiplication} \cite{Mac:ext} if, for all $a,b\in A$, the following conditions hold:
\begin{equation}\label{bimultiplication}
    \sigma(ab)=(\sigma a)b,\quad (ab)\sigma=a(b\sigma), \quad a(\sigma b)=(a\sigma) b.
\end{equation}

If, additionally, $\sigma$ satisfies that 
\begin{equation*}%\label{homothety}
    \sigma(a\sigma)=(\sigma a)\sigma
\end{equation*}
for all $a\in A$, then it is called a \textbf{double homothetism} \cite{Red:ver}.
\end{definition}

The following simple lemma turns out to be very useful in the sequel.

\begin{lemma}\label{lem.deriv.bim}
    If $\sigma$ is a bimultiplication on an associative algebra $A$, then the map
    $$
    d^\sigma \colon A\to A, \qquad a\mapsto \sigma a-a \sigma,
    $$
    is a derivation on $A$.
\end{lemma}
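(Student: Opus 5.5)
Linearity of $d^\sigma$ is immediate, since $\overleftarrow{\sigma}$ and $\overrightarrow{\sigma}$ are linear endomorphisms of $A$ and $d^\sigma=\overrightarrow{\sigma}-\overleftarrow{\sigma}$. What remains is the Leibniz rule
$$
d^\sigma(ab)=d^\sigma(a)\,b+a\,d^\sigma(b)\qquad\text{for all } a,b\in A.
$$
I will expand both sides and reduce them to a common form using only the three bimultiplication identities \eqref{bimultiplication}. The double homothetism condition will not be needed.

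For the left-hand side, the first two identities in \eqref{bimultiplication} give
$$
d^\sigma(ab)=\sigma(ab)-(ab)\sigma=(\sigma a)b-a(b\sigma).
$$
For the right-hand side, expanding and using associativity of $A$ gives
$$
(\sigma a-a\sigma)b+a(\sigma b-b\sigma)=(\sigma a)b-(a\sigma)b+a(\sigma b)-a(b\sigma).
$$
The middle terms $-(a\sigma)b+a(\sigma b)$ cancel by the third identity $a(\sigma b)=(a\sigma)b$. What is left is $(\sigma a)b-a(b\sigma)$, which equals the left-hand side.

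There is no real obstacle in this argument. The only care needed is in the bookkeeping: $\sigma a$ and $a\sigma$ are values of two separate linear maps, not products in $A$. So every rebracketing has to be justified by one of the identities in \eqref{bimultiplication} and not by associativity alone. Associativity of $A$ is used only for products of three genuine elements of $A$, such as $(\sigma a)b$, where $\sigma a\in A$.
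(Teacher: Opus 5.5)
Your proof is correct and follows the paper's approach: the paper only says the claim is straightforward from the bimultiplication identities \eqref{bimultiplication}, and you carry out that verification explicitly. One small wording point: expanding the right-hand side uses bilinearity of the product, not associativity of $A$, and associativity is not actually needed anywhere in the argument.
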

\begin{proof}
The proof is straightforward from the associativity rules \eqref{bimultiplication}.
\end{proof}

\begin{definition}\label{Def: hd}
    Let $A$ be an associative algebra. A pair $(\sigma,s)$, where $\sigma$ is a double homothetism and $s\in A$, is called a \textbf{homothetic datum} provided
    that
    \begin{gather}\label{commuts}
        \sigma s= s\sigma, \\  \label{homothetic datum} \sigma^2=\sigma+\bar{s},
    \end{gather}
    where $\bar{s}$ is an inner homothetism, that is, $\bar{s}$ is a double operator which multiplies by $s$ on the left and on the right sides.
\end{definition}
\begin{proposition}\label{affgebra_by_homothety}
    Let $A$ be an associative algebra. View $A$ as an affine space $(A,\langle-,-,-\rangle, -\triangleright_{-}-)$, where 
    \begin{equation*}
        \langle a,b,c\rangle\coloneqq a-b+c, \qquad \alpha\triangleright_{a}b \coloneqq (1-\alpha)a+\alpha b,
    \end{equation*}
    for all $\alpha\in\mathbb{F}$ and for all $a,b,c\in A$. Then, $A$ is an associative affgebra with the affine product given by
    \begin{equation*}%\label{aff.mult}
        a{\np} b\coloneqq ab+a\sigma+\sigma b+ s,
    \end{equation*}
    for all $a,b\in A$, where $(\sigma, s)$ is an homothetic datum on A. We denote this affgebra by $\aaa(\sigma, s)$. 

    Furthermore, if $A$ is a commutative algebra, then $\aaa(\sigma, s)$ is a quasi-commutative affgebra.
\end{proposition}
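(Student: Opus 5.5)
Three things need checking: that $\np$ is bi-affine, that it is associative, and that it is quasi-commutative when $A$ is commutative.

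\textbf{Bi-affinity.} For fixed $a$, the map $b\mapsto a\np b=ab+\sigma b+(a\sigma+s)$ is a linear map plus a constant. A linear map plus a constant preserves $\langle x,y,z\rangle=x-y+z$ and the operation $\alpha\triangleright_x y=(1-\alpha)x+\alpha y$, because the constant enters with total coefficient $1-1+1=1$ and $(1-\alpha)+\alpha=1$ respectively. So $a\np -$ is affine, and symmetrically so is $-\np b$.

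\textbf{Associativity.} Expand both sides directly. The left side is
\[
(a\np b)\np c=(ab+a\sigma+\sigma b+s)c+(ab+a\sigma+\sigma b+s)\sigma+\sigma c+s .
\]
The right side is
\[
a\np(b\np c)=a(bc+b\sigma+\sigma c+s)+a\sigma+\sigma(bc+b\sigma+\sigma c+s)+s .
\]
Use associativity of $A$ together with the bimultiplication rules \eqref{bimultiplication}: $(ab)\sigma=a(b\sigma)$, $\sigma(bc)=(\sigma b)c$ and $(a\sigma)c=a(\sigma c)$. These match the terms $abc$, $(ab)\sigma$, $(a\sigma)c$ and $\sigma(bc)$. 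The double homothetism condition $\sigma(b\sigma)=(\sigma b)\sigma$ matches the mixed $b$-terms. The terms $sc$ and $as$ appear on both sides.

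The remaining terms, which I expect to be the only delicate step, are $(a\sigma)\sigma+s\sigma+\sigma c$ on the left against $a\sigma+\sigma(\sigma c)+\sigma s$ on the right. Read \eqref{homothetic datum} as $(a\sigma)\sigma=a\sigma+as$ and $\sigma(\sigma c)=\sigma c+sc$, and use $\sigma s=s\sigma$ from \eqref{commuts}. The leftover terms then become $a\sigma+as+s\sigma+\sigma c$ on both sides. Before doing this I will recheck the bookkeeping, because the extra $as$ and $sc$ produced here must not be double-counted against the $sc$ and $as$ already matched from the expansions. If the count is off, I will rewrite the product as $a\np b=(a+\sigma)(b+\sigma)-\sigma^2+s$ in a formal unitization where $\sigma$ acts. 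In that form associativity follows once $s-\sigma^2$ behaves like $-\sigma$, which is exactly what \eqref{homothetic datum} encodes.

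\textbf{Quasi-commutativity.} Assume $A$ is commutative. Compute
\[
\langle a\np c,a\np b,c\np b\rangle=ac-ab+cb+c\sigma-b\sigma+\sigma c-\sigma a+\sigma b+s .
\]
Computing the symmetric expression $\langle b\np c,b\np a,c\np a\rangle$ and subtracting, commutativity removes the $ab$, $ac$ and $bc$ terms. What survives is $\sigma$-linear and requires $a\sigma-\sigma a$, $b\sigma-\sigma b$ and $c\sigma-\sigma c$ to cancel in the right combination. That cancellation does not follow from commutativity of $A$ alone. I would therefore note that for commutative $A$ one needs $\sigma a=a\sigma$, which holds for the derivation $d^\sigma$ of Lemma~\ref{lem.deriv.bim} when it vanishes. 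Failing that, I would fall back on Proposition~\ref{prop.tan.ass}: the fibre product at $o=0$ is $ab$, and quasi-commutativity is equivalent to commutativity of this fibre product, with the translation isomorphisms carrying it to every fibre.
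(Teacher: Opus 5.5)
\textbf{The main error is in the quasi-commutativity step.} Your expansion is wrong, and the conclusion you draw from it is false. In $\langle x\np y,x\np w,z\np w\rangle$ the term $x\sigma$ occurs in the first two entries and $\sigma w$ in the last two, so both cancel. This gives
\[
\langle a\np c,a\np b,c\np b\rangle=ac-ab+cb+c\sigma+\sigma c+s,\qquad \langle b\np c,b\np a,c\np a\rangle=bc-ba+ca+c\sigma+\sigma c+s .
\]
The terms $-b\sigma-\sigma a+\sigma b$ in your formula do not exist. The difference of the two sides is $(ac-ca)+(ba-ab)+(cb-bc)$, which vanishes for commutative $A$ with no condition at all on $\sigma$; this is the ``straightforward calculation'' the paper refers to.

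If your claim that one also needs $\sigma a=a\sigma$ were right, the proposition would be false, because that condition genuinely fails in admissible examples. Take $A=\mathbb{F}$ with zero product, $\overleftarrow{\sigma}=0$, $\overrightarrow{\sigma}=\mathrm{id}$, $s=0$ (family (1A) in Section~\ref{clas1dim}). Then $\sigma a\neq a\sigma$, yet $a\np b=b$ and both sides of \eqref{quasi-com} equal $c$. Invoking $d^\sigma$ does not help, since $d^\sigma=0$ is merely a restatement of $\sigma a=a\sigma$.

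Your fallback is correct and proves the claim as stated. At $o=0$ one has $a\cdot_0 b=ab$ in $T_0\aaa(\sigma,s)$. Commutativity of $\cdot_c$ is exactly \eqref{quasi-com} for that fixed $c$; this converse is in the proof of Proposition~\ref{prop.tan.ass}, not in its statement. Finally, the translations are algebra isomorphisms $T_0\aaa(\sigma,s)\to T_c\aaa(\sigma,s)$. But this route contradicts your claim that the direct check fails, and you should have resolved that contradiction rather than presenting the two side by side.

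\textbf{Associativity.} The argument is sound, but the bookkeeping as written is off: $sc$ and $as$ do not appear on both sides. After the matchings you list, the left side retains $sc+(a\sigma)\sigma+s\sigma+\sigma c$ and the right side retains $as+a\sigma+\sigma(\sigma c)+\sigma s$. Substituting $(a\sigma)\sigma=a\sigma+as$ and $\sigma(\sigma c)=\sigma c+sc$ supplies exactly the missing $as$ on the left and $sc$ on the right. Then $s\sigma=\sigma s$ from \eqref{commuts} finishes, so no unitization is needed.

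\textbf{Bi-affinity.} Your ``linear map plus constant'' argument is correct. It is a more economical version of the paper's explicit scalar computation; the paper delegates associativity and the heap-morphism property to an earlier result on trusses.
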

\begin{proof}
It is proven in \cite[Theorem~3.6]{AndBrzRyb:ext}, that $\np$ makes $A$ into a truss, i.e.,\ an associative affgebra over $\mathbb{Z}$, so only the compatibility of the product $\np$ with the affine scalar multiplication needs to be shown. 
  %  We only need to show that the product defined in \eqref{aff.mult} is a bi-affine map and associative. At first, we prove that $-\np b\colon  A\rightarrow A$ is a heap morphism:
 %   \begin{align*}
%        \langle x,y,z\rangle \np b=& (x-y+z)b+(x-y+z)\sigma +\sigma b+s\\
 %      =& xb-yb+zb+x\sigma -y\sigma +z\sigma +\sigma b +s\\
%       =&xb+x\sigma +\sigma b+s-yb-y\sigma -\sigma b-s+zb+z\sigma +\sigma b+s\\
%       =&x\np b-y\np b+z\np b=\langle x\np b,y\np b,z\np b\rangle.
%    \end{align*}
%    
    %The affine condition \eqref{affmap_cond}
   This follows by
    \begin{align*}
        (\alpha \triangleright_{x}y)\np b=&[(1-\alpha)x+\alpha y]b+[(1-\alpha)x+\alpha y]\sigma +\sigma b+s\\
        =&(1-\alpha)xb+\alpha yb+(1-\alpha)x\sigma+\alpha y\sigma +\sigma b+s\\
        =&(1-\alpha)xb+(1-\alpha)x\sigma+\sigma b+s-\alpha \sigma b-\alpha s+\alpha yb+\alpha y\sigma+\alpha \sigma b+\alpha s\\
        =&(1-\alpha)[xb+x\sigma+\sigma b+s]+\alpha[yb+y\sigma+\sigma b+s]\\
        =&(1-\alpha)(x\np b)+\alpha(y\np b)=\alpha\triangleright_{x\np b} (y\np b).
    \end{align*}
    Therefore, the product is affine in the first argument. By similar arguments we also obtain that $a\np -$ is affine. 
\begin{comment}
    The associativity of the product is obtained as follows: On the one hand,
    \begin{align*}
        (a\np b)\np c=& (ab+a\sigma+\sigma b+s)c+(ab+a\sigma+\sigma b+s)\sigma+\sigma c+s\\
        =& abc +(a\sigma)c+(\sigma b)c+sc+(ab)\sigma +a\sigma^{2} +(\sigma b)\sigma +s\sigma +\sigma c+s,
    \end{align*}
    while, on the other hand,
    \begin{align*}
        a\np (b\np c)=& a(bc+b\sigma+\sigma c+s)+a\sigma+\sigma(bc+b\sigma+\sigma c+s)+s\\=&
        abc+a(b\sigma)+a(\sigma c)+as+a\sigma +\sigma(bc) +\sigma(b\sigma)+\sigma^{2}c+\sigma s+s\\=&
        abc+(ab)\sigma +(a\sigma)c+a(s+\sigma)+(\sigma b)c+(\sigma b)\sigma +\sigma^{2}c+\sigma s+s\;{\footnotesize \textnormal{(by \eqref{bimultiplication} and \eqref{homothety})}}\\=&abc+(ab)\sigma+(a\sigma)c+a\sigma^{2}+(\sigma b)c+(\sigma b)\sigma +\sigma c+sc+\sigma s+s \;{\footnotesize\textnormal{(by \eqref{homothetic datum})}}
        \\=& abc+(ab)\sigma+(a\sigma)c+a\sigma^{2}+(\sigma b)c+(\sigma b)\sigma +\sigma c+sc+s\sigma +s\;{\footnotesize\textnormal{(by \eqref{commuts})}}.
    \end{align*}
    Consequently, $(a\np b)\np c=a\np (b\np c)$ for all $a,b,c\in A$.
\end{comment}

The quasi-commutativity condition \eqref{quasi-com} for $\np$  follows from the commutativity of the product in $A$ by a straightforward calculation.
    \end{proof}

The previous proposition provides a method for constructing associative affgebras from associative algebras using homothetic data. Besides, as it happens in the Lie setting \cite[Theorem 3.1]{BRZ3}, any associative affgebra is realized in this way, i.e., there is a 1-1 correspondence between associative affgebras and associative algebras supplemented with a homothetic datum. Indeed, given an associative affgebra $(\mathfrak{a},\np)$ and a point $o\in\mathfrak{a}$, the pair $(\sigma_o, s_o)$, where
\begin{equation}\label{hom.o}
    a\sigma_o \coloneqq a\np o - o\np o,\quad \sigma_o a \coloneqq o\np a - o\np o, \quad s_o \coloneqq o\np o,
\end{equation}
is a homothetic datum on the associative algebra $(T_o\mathfrak{a},\cdot_{o})$, such that $(\mathfrak{a},\np) = \mathfrak{a}(\sigma_o,s_o)$. Moreover, if $\aaa(\sigma,s)$ is the affgebra constructed in Proposition~\ref{affgebra_by_homothety}, the fibre at any point $e\in A$, $T_e\mathfrak{a}(\sigma,s)$, coincides with $A$ up to isomorphism. Thus one can always view associative (and quasi-commutative) affgebras as arising from homothetic data on associative (and commutative) algebras. See concluding remarks in \cite[Section~4]{AndBrzRyb:ext}.

\section{Affinization of derivations}\label{sec.der}
The first new contribution of this work is the introduction of a natural notion of a derivation in the affine setting. In this section, we examine its main properties in order to justify the naturality of the concept.
\begin{definition}\label{def.deriffation}
Let $(\aaa,\np)$ be an associative affgebra and $D\colon \aaa \rightarrow \aaa$ an affine map. The \textbf{derivator} of $D$ is the bi-affine map
\begin{equation*}
    \Delta^D\colon \aaa \times \aaa \rightarrow \aaa, \quad (a,b)\mapsto \langle D(a)\np b,D(a\np b),a\np D(b)\rangle.
\end{equation*}

We say that $D$ is an \textbf{affine derivation} on $\mathfrak{a}$ or, for short, a \textbf{deriffation} on $\mathfrak{a}$, if, for all $o\in \aaa$, the linearization of $\Delta^{D}$ at $o\in\mathfrak{a}$, which is the bilinear map $T_o\Delta^D\colon T_o\aaa\times T_o\aaa \rightarrow T_o\aaa$, defined by
\[T_{o}\Delta^{D}(a,b)\coloneqq \Delta^{D}(a,b)-\Delta^{D}(a,o)+\Delta^{D}(o,o)-\Delta^{D}(o,b),\]
is identically $o$, for all $a,b\in \aaa$.
    
\end{definition}
\begin{lemma}\label{independence_of_chosen_point}
    In the conditions of the previous definition, if $T_o\Delta^D=o$ for some $o\in \aaa$, then $T_o\Delta^D=o$ for all $o\in \aaa$.
\end{lemma}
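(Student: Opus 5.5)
The plan is to reduce the statement to a property of the single bi-affine map $\Delta^D$. Vanishing of its linearization at a point turns out to be a point-independent condition.

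First, I will note that $\Delta^D$ is bi-affine. Indeed, $D$ is affine and $\np$ is bi-affine, so each of the maps $(a,b)\mapsto D(a)\np b$, $(a,b)\mapsto D(a\np b)$ and $(a,b)\mapsto a\np D(b)$ is bi-affine. The heap bracket of three bi-affine maps is again bi-affine, because $\langle-,-,-\rangle$ is affine in each variable on an abelian heap.

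Next, I will use the general fact that any bi-affine map $F\colon\aaa\times\aaa\to\aaa$ has, relative to a point $o$, the normal form
\[
F(a,b)=\langle T_oF(a,b),\,o,\,F(a,o),\,F(o,o),\,F(o,b)\rangle ,
\]
where $T_oF$ is bilinear on $T_o\aaa$. This is just the definition of $T_oF$ rearranged. Equivalently, $T_oF(a,b)$ is the ``second mixed difference'' $\langle F(a,b),F(a,o),F(o,o),F(o,b),o\rangle$. The condition $T_o\Delta^D\equiv o$ is therefore equivalent to
\[
\Delta^D(a,b)=\langle \Delta^D(a,o),\Delta^D(o,o),\Delta^D(o,b)\rangle\quad\text{for all } a,b\in\aaa,
\]
that is, to $\Delta^D(a,b)=\phi(a)+\psi(b)$ in some fibre, with $\phi,\psi$ affine. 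In words, the mixed bilinear part of $\Delta^D$ vanishes.

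The key step is to show that this property does not depend on $o$. I will compute the mixed difference at a new point $e$,
\[
\langle \Delta^D(a,b),\Delta^D(a,e),\Delta^D(e,e),\Delta^D(e,b)\rangle ,
\]
and substitute the displayed decomposition at $o$ for each of the four terms. The pieces $\Delta^D(a,o)$, $\Delta^D(o,b)$, $\Delta^D(e,o)$, $\Delta^D(o,e)$ and the copies of $\Delta^D(o,o)$ then cancel in the abelian heap, leaving $\langle\cdot\rangle=o$ after appending $e$ appropriately. Hence $T_e\Delta^D(a,b)=e$ for all $a,b$.

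Alternatively, I could use that the translation $\tau_o^e$ is a linear isomorphism $T_o\aaa\to T_e\aaa$ and that $T_e\Delta^D(a,b)=\tau_o^e\bigl(T_o\Delta^D(a,b)\bigr)$. The biadditive parts at $o$ and $e$ are intertwined by translation, since the quadratic part of a bi-affine map is translation-invariant. I expect the main obstacle to be purely bookkeeping: keeping track of the heap brackets, with their signs and the placement of the base points $o$ and $e$, in the cancellation. I would carry this out in the retract $T_o\aaa$, where $\langle x,y,z\rangle=x-y+z$, which makes the cancellation a routine additive computation.
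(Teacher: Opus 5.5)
Your main argument is correct and is the paper's proof: you rewrite $T_o\Delta^D\equiv o$ as $\Delta^D(a,b)=\langle\Delta^D(a,o),\Delta^D(o,o),\Delta^D(o,b)\rangle$, substitute this into the mixed difference at $e$, and cancel in the abelian heap; the bi-affinity check and the normal form are not actually needed for that cancellation. One slip in your alternative route: the correct intertwining, as in the paper's remark, is $T_e\Delta^D=\tau_o^e\circ T_o\Delta^D\circ\bigl((\tau_o^e)^{-1}\times(\tau_o^e)^{-1}\bigr)$, not $T_e\Delta^D(a,b)=\tau_o^e\bigl(T_o\Delta^D(a,b)\bigr)$, although this makes no difference for the vanishing statement.
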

\begin{proof}
    Let $o\in\aaa$ such that $T_o\Delta^D=o$. Thus, for all $a,b\in \aaa$,
    \begin{equation}\label{equaux}\Delta^D(a,b)=\Delta^{D}(a,o)-\Delta^{D}(o,o)+\Delta^{D}(o,b)=\langle \Delta^{D}(a,o),\Delta^{D}(o,o),\Delta^{D}(o,b)\rangle.\end{equation} 
    
    Using this formula, given a different $e\in\mathfrak{a}$, it is obtained that
    \begin{align*}
        \Delta^{D}(a,e)-\Delta^{D}(e,e)+\Delta^{D}(e,b)=&\langle \Delta^{D}(a,e),\Delta^{D}(e,e),\Delta^{D}(e,b)\rangle\\=&\langle\Delta^{D}(a,o),\Delta^{D}(o,o),\Delta^{D}(o,e),\\&\,\:\Delta^{D}(e,o),\Delta^{D}(o,o),\Delta^{D}(o,e),\\&\,\:\Delta^{D}(e,o),\Delta^{D}(o,o),\Delta^{D}(o,b)\rangle\;{\footnotesize\textnormal{(by \eqref{equaux})}}\\=&\langle\Delta^{D}(a,o),\Delta^{D}(o,o),\Delta^{D}(o,b)\rangle\;{\footnotesize\textnormal{(by \eqref{h_ab} and the Mal'cev identities)}}\\=&\Delta^{D}(a,b)\;{\footnotesize\textnormal{(by \eqref{equaux})}}.
    \end{align*}
    Therefore, $T_{e}\Delta^{D}=e$. 
\end{proof}
\begin{remark}
    Alternatively, the previous lemma can be proved as a consequence of the commutativity of the following diagram:

    \begin{minipage}{0.45\linewidth}
    \[\xymatrix{&T_{o}\aaa\times T_{o}\aaa\ar[d]_-{\tau_{o}^{e}\times\tau_{o}^{e}}\ar[rr]^-{T_{o}\Delta^{D}} & &T_{o}\aaa\ar[d]^-{\tau_{o}^{e}}\\&T_{e}\aaa\times T_{e}\aaa\ar[rr]^-{T_{e}\Delta^{D}} & &T_{e}\aaa.}\]
    \end{minipage}
    \begin{minipage}{0.59 \linewidth}
    \vspace{.3cm}
        Since $\tau_{o}^{e}$ is an isomorphism, it is obtained that
    \[T_{e}\Delta^{D}=\tau_{o}^{e}\circ T_{o}\Delta^{D}\circ ((\tau_{o}^{e})^{-1}\times(\tau_{o}^{e})^{-1})).\]

    Hence, if $T_{o}\Delta^{D}=o$, then
    \[T_{e}\Delta^{D}(a,b)=\tau_{o}^{e}(o)=\langle o,o,e\rangle=e.\]
    \end{minipage}
\end{remark}

In other words, Lemma \ref{independence_of_chosen_point} shows that the notion of a deriffation is independent of the chosen base point: if the derivator vanishes at some point $o\in\mathfrak{a}$, then it vanishes at any other point as well.
\begin{proposition}\label{equiv.deriffation_and_derivation}
    An affine map $D\colon \aaa \rightarrow \aaa$ is a deriffation on an associative affgebra  $\aaa$ if and only if the linearization of $D$ at any $o\in \mathfrak{a}$, $T_oD$, is a derivation on $T_o\aaa$. %for all $o\in \aaa$.
\end{proposition}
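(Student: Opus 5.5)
Fix $o\in\aaa$ and work throughout in the vector space $T_o\aaa$. By the discussion following Proposition~\ref{affgebra_by_homothety}, the affine product can be written as
\[
a\np b=ab+a\sigma_o+\sigma_o b+s_o,
\]
where juxtaposition is $\cdot_o$ and $(\sigma_o,s_o)$ is the homothetic datum \eqref{hom.o}. The affine map $D$ can likewise be written as $D(a)=\delta(a)+d$, where $\delta=T_oD\colon a\mapsto D(a)-D(o)$ is linear and $d=D(o)$. By Lemma~\ref{independence_of_chosen_point}, $D$ is a deriffation if and only if $T_o\Delta^D\equiv o$ for this single $o$. The translation isomorphism $\tau_o^e$ intertwines the linearizations of $D$, and it is an algebra isomorphism by Proposition~\ref{prop.tan.ass}. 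Hence $T_oD$ is a derivation for one $o$ if and only if $T_eD$ is a derivation for every $e$. So it suffices to prove, for a fixed $o$, that $T_o\Delta^D=0$ exactly when $\delta$ is a derivation of $(T_o\aaa,\cdot_o)$.

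The plan is to expand $\Delta^D(a,b)=D(a)\np b-D(a\np b)+a\np D(b)$ in $T_o\aaa$. Substituting the formulas gives
\begin{align*}
D(a)\np b&=\delta(a)b+db+\delta(a)\sigma_o+d\sigma_o+\sigma_o b+s_o,\\
D(a\np b)&=\delta(ab)+\delta(a\sigma_o)+\delta(\sigma_o b)+\delta(s_o)+d,\\
a\np D(b)&=a\delta(b)+ad+a\sigma_o+\sigma_o\delta(b)+\sigma_o d+s_o.
\end{align*}
We do not need to simplify the terms that are not bilinear. The linearization $T_o\Delta^D(a,b)=\Delta^D(a,b)-\Delta^D(a,0)+\Delta^D(0,0)-\Delta^D(0,b)$ (with $o$ as the zero of $T_o\aaa$) extracts exactly the purely bilinear part of the bi-affine map $\Delta^D$. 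Every term above is bilinear in $(a,b)$, linear in $a$ alone, linear in $b$ alone, or constant, and the alternating sum kills all but the bilinear ones. This leaves
\[
T_o\Delta^D(a,b)=\delta(a)b-\delta(ab)+a\delta(b).
\]
This vanishes identically if and only if $\delta(ab)=\delta(a)b+a\delta(b)$ for all $a,b$, that is, if and only if $\delta=T_oD$ is a derivation.

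The main obstacle is justifying cleanly that the alternating sum kills the non-bilinear terms, such as $\delta(a\sigma_o)$, $ad$ and $\delta(s_o)$, without expanding them. This follows from a general fact: any bi-affine map $F$ on a vector space decomposes as $F(a,b)=B(a,b)+L(a)+R(b)+c$ with $B$ bilinear and $L,R$ linear. The four-term linearization then returns $B$, since the $L$, $R$ and $c$ contributions each appear with opposite signs and cancel. I would state this decomposition briefly and then read off $B$ from the expansion above.

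Finally, the equivalence "for any $o$" in the statement follows from the base-point independence noted in the first paragraph, namely Lemma~\ref{independence_of_chosen_point} together with the translation isomorphisms.
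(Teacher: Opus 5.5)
Your proof is correct and has the same structure as the paper's. You reduce to a single base point via Lemma~\ref{independence_of_chosen_point}, then show by direct expansion that $T_o\Delta^D(a,b)=T_oD(a)\,b-T_oD(ab)+a\,T_oD(b)$ in $(T_o\aaa,\cdot_o)$. The only difference is the direction of the bookkeeping: the paper rewrites $\cdot_o$ and $T_oD$ in terms of $\np$ and $D$ via \eqref{ass.prod.tang} and regroups into the four values of $\Delta^D$, whereas you write $\np$ through $(\sigma_o,s_o)$ and $D=\delta+d$ and read off the bilinear part, which is slightly shorter. Your translation-isomorphism step is harmless but redundant, since the pointwise identity already holds at every $o$.
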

\begin{proof}
     According to Lemma \ref{independence_of_chosen_point}, we only need to show the equivalence in a chosen point $o\in \aaa$. In this setting, it is obtained that
    \begin{align*}
        T_{o}D(ab)-T_{o}D(a)b-aT_{o}D(b)=&D(ab)-D(o)-D(a)b+D(o)b-aD(b)+aD(o)\\
        %=&D(a\np b-a\np o+o\np o-o\np b)-D(o)-D(a)b+D(o)b-aD(b)+aD(o)\\
        =&D(a\np b-a\np o+o\np o-o\np b)-D(o)\\&-D(a)\np b+D(a)\np o-o\np o+o\np b\\
        &+D(o)\np b-D(o)\np o+o\np o-o\np b\\&-a\np D(b)+a\np o-o\np o+o\np D(b)\\&+a\np D(o)-a\np o+o\np o-o\np D(o)\\
        =&D(a\np b)- D(a)\np b-a\np D(b)-D(a\np o)+ D(a)\np o+a\np D(o)\\
        &-D(o\np b)+ D(o)\np b+o\np D(b)+D(o\np o)- D(o)\np o-o\np D(o)\\
        =&-\Delta^D(a,b)+\Delta^D(a,o)-\Delta^D(o,o)+\Delta^D(o,b)\\
        =&-T_{o}\Delta^D(a,b).
    \end{align*}
    Therefore, the previous equality shows that $T_{o}D$ is a derivation on $T_{o}\aaa$ iff $T_{o}\Delta^{D}$ vanishes, which means that $D$ is a deriffation on $\mathfrak{a}$.
\end{proof}
\begin{corollary}
 For any associative algebra $A$ and any homothetic datum  $(\sigma,s)$, if $ d\colon A\rightarrow A$ is a linear endomorphism and $\delta \in A$, then the affine map $D\coloneqq d+\delta$ is a deriffation on $\aaa(\sigma,s)$ if and only if $d$ is a derivation on $A$.    
\end{corollary}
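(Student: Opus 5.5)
The plan is to reduce the statement to Proposition~\ref{equiv.deriffation_and_derivation} by linearizing at a convenient base point. That proposition says $D$ is a deriffation on $\aaa(\sigma,s)$ if and only if $T_oD$ is a derivation on $(T_o\aaa(\sigma,s),\cdot_o)$ for some (equivalently every) $o$. Since $\aaa(\sigma,s)$ has underlying set $A$ with heap operation $a-b+c$, the natural choice is $o=0\in A$. Then $T_0\aaa(\sigma,s)$ is $A$ with its original vector space structure, because $\alpha\triangleright_0 a=\alpha a$ and $\langle a,0,b\rangle=a+b$.

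First I will check that the algebra product $\cdot_0$ of Proposition~\ref{prop.tan.ass} on $T_0\aaa(\sigma,s)$ coincides with the original product of $A$. Using formula \eqref{ass.prod.tang} with $o=0$,
\[
a\cdot_0 b = a\np b - a\np 0 + 0\np 0 - 0\np b .
\]
Substituting $a\np b=ab+a\sigma+\sigma b+s$, the individual terms are $a\np 0=a\sigma+s$ (since $\sigma 0=0$ by linearity of $\overrightarrow{\sigma}$), $0\np b=\sigma b+s$, and $0\np 0=s$. Everything except $ab$ cancels, so $(T_0\aaa(\sigma,s),\cdot_0)=A$ as associative algebras.

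Next I will compute the linearization of $D=d+\delta$ at $0$:
\[
T_0D(a)=D(a)-D(0)=d(a)+\delta-\delta=d(a).
\]
Here I use that the affine map $a\mapsto d(a)+\delta$ has induced linear map $\overrightarrow{D}(a)=D(a)-D(0)$. So $T_0D=d$, and Proposition~\ref{equiv.deriffation_and_derivation} gives the equivalence immediately.

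There is no real obstacle. The only point needing care is that Lemma~\ref{independence_of_chosen_point} lets us test the deriffation condition at the single point $0$. Also, the direction "deriffation implies $d$ is a derivation" uses the proposition applied at $o=0$, which is legitimate because the proposition holds for every $o$.
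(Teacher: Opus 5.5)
Your proposal is correct and follows the paper's proof. The paper also deduces the corollary from Proposition~\ref{equiv.deriffation_and_derivation}, using that an affine map is a linear part plus a constant and that the fibre of $\aaa(\sigma,s)$ is (isomorphic to) $A$. Your explicit checks that $\cdot_0$ coincides with the original product of $A$ and that $T_0D=d$ just spell out details the paper leaves implicit.
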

\begin{proof}
This statement follows from Proposition~\ref{equiv.deriffation_and_derivation} and the facts that every affine map is a sum of a linear and a constant part, and that the tangent to the associative affgebra $\aaa(\sigma,s)$ obtained in Proposition \ref{affgebra_by_homothety} is isomorphic to $A$.
\end{proof}

\begin{definition}\label{def.inner}
    Let $(\mathfrak{a},\np)$ be an associative affgebra. A bi-affine map $\iota: \mathfrak{a}\times \mathfrak{a}\to \mathfrak{a}$ is called a \textbf{left} (respectively, \textbf{right}) \textbf{inner deriffation} if there exists an element $o\in \mathfrak{a}$ such that, for all $a\in \mathfrak{a}$, the linear map $T_o \iota(a,-)\colon T_o \mathfrak{a} \to T_o \mathfrak{a}$ (resp., $T_o \iota(-,a)$) is an inner derivation on $(T_o \mathfrak{a},\cdot_o)$. A left and right inner deriffation  $\iota$  such that
    $
    T_o\iota(a,b) = ab - ba$ in $(T_o\aaa, \cdot_{o})$, for all $a,b\in \aaa$,  is called an  \textbf{inner bi-deriffation}.
\end{definition}

\begin{lemma}\label{lem.inner}
    If $\iota\colon \mathfrak{a}\times \mathfrak{a} \to \mathfrak{a}$ is a left (resp., right) inner deriffation  on $(\mathfrak{a},\np)$, then, for all $e\in \mathfrak{a}$, $T_e \iota(a,-)\colon T_e \mathfrak{a} \to T_e \mathfrak{a}$ (resp., $T_e \iota(-,a)$) is an inner derivation on $(T_e \mathfrak{a},\cdot_e)$.
\end{lemma}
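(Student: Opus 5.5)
Our plan is to transport the statement from $o$ to $e$ by the translation $\tau_o^e$, as in the Remark after Lemma~\ref{independence_of_chosen_point}. We treat the left case; the right case is symmetric. Fix $a\in\aaa$ and put $f\coloneqq\iota(a,-)\colon\aaa\to\aaa$, which is affine. By hypothesis there are $o\in\aaa$ and $c\in T_o\aaa$ with $T_of(x)=f(x)-f(o)=cx-xc$ in $(T_o\aaa,\cdot_o)$ for all $x$. We must show that $T_ef(y)=f(y)-f(e)$, computed in $T_e\aaa$, is an inner derivation of $(T_e\aaa,\cdot_e)$.

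First we check that linearizations of an affine map intertwine with translations, i.e.
\[T_ef\circ\tau_o^e=\tau_o^e\circ T_of .\]
The left side is $\langle f(\langle x,o,e\rangle),f(e),e\rangle=\langle f(x),f(o),f(e),f(e),e\rangle=\langle f(x),f(o),e\rangle$. This uses that $f$ is a heap morphism, together with abelianity and the Mal'cev identities. The result equals $\tau_o^e(\langle f(x),f(o),o\rangle)$, which is the right side. The same heap computation gives linearity compatibility, since $\tau_o^e$ is a linear isomorphism by \cite[Remark~2.5]{BRZ2}.

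Next we use Proposition~\ref{prop.tan.ass}: $\tau_o^e$ is an algebra isomorphism $(T_o\aaa,\cdot_o)\to(T_e\aaa,\cdot_e)$. Then, for $y=\tau_o^e(x)$,
\[T_ef(y)=\tau_o^e(cx-xc)=\tau_o^e(c)\cdot_e y-y\cdot_e\tau_o^e(c).\]
So $T_ef$ is the inner derivation determined by $\tau_o^e(c)\in T_e\aaa$. Since $\tau_o^e$ is surjective, this identity holds for every $y\in T_e\aaa$.

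The only point needing care is the intertwining identity for linearizations, because differences are taken in different retracts. We resolve it by the heap computation above rather than with vector notation. Everything else follows directly from Proposition~\ref{prop.tan.ass}, using that algebra isomorphisms conjugate inner derivations to inner derivations.
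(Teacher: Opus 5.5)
Your proof is correct and follows essentially the same route as the paper: both arguments transport the inner derivation from $T_o$ to $T_e$ along the translation $\tau_o^e$, which is an algebra isomorphism by Proposition~\ref{prop.tan.ass}, and both obtain the inner element $\tau_o^e(c)$ (the paper's $\tau_o^e(f(a))$). The difference is only in presentation: the paper does the transport inline as a single heap-bracket computation on $\iota(a,b)$, whereas you first isolate the intertwining identity $T_e f\circ\tau_o^e=\tau_o^e\circ T_o f$, which also lets you fix $a$ and choose $c$ directly instead of assuming the inner element depends affinely on $a$.
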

\begin{proof}
    If $\iota$ is a left inner deriffation, then there exist $o\in \mathfrak{a}$ and an affine map $f:\mathfrak{a}\to \mathfrak{a}$, such that, for all $a,b \in \mathfrak{a}$,
    $$
    \iota(a,b) = \langle f(a)\cdot _o b, b\cdot _of(a), \iota(a,o)\rangle
    $$
    in the algebra $(T_o\mathfrak{a},\cdot_{o})$. In particular, for any $e\in \mathfrak{a}$,
    $$
    \iota(a,e) = \langle f(a)\cdot_oe, e\cdot_of(a), \iota(a,o)\rangle,
    $$
    so that
    $$
    \begin{aligned}
        \iota(a,b) &= \langle f(a)\cdot _o b, f(a)\cdot_oe, e\cdot_of(a), b\cdot _of(a), \iota(a,e)\rangle\\
        &= \langle f(a)\cdot _o \tau_e^o(b), \tau_e^o(b)\cdot _of(a), \iota(a,e)\rangle \\
        &= \langle \tau_e^o \left(\tau_o^e (f(a))\cdot _e b\right), \tau_e^o\left(b\cdot _e\tau_o^e (f(a))\right), \iota(a,e)\rangle \\
        &= \langle \tau_o^e (f(a))\cdot _e b, b\cdot _e\tau_o^e (f(a)), \iota(a,e)\rangle ,
    \end{aligned}
    $$
    that is, $T_e\iota(a,-)$ is an inner derivation in $(T_e\mathfrak{a},\cdot_e)$, as required. The case of a right inner deriffation is dealt with by similar arguments.
\end{proof}

\begin{proposition}\label{prop.inner}
    Let $(\sigma, s)$ be a homothetic datum on an associative algebra $A$ and let $\aaa(\sigma,s)$ be the corresponding affgebra as in Proposition~\ref{affgebra_by_homothety}. Then, $\iota\colon A\times A\to A$ is an inner bi-deriffation on $\aaa(\sigma,s)$ if and only if there exist $l,m,t\in A$ such that, for all $a,b\in A$,
    \begin{equation}\label{inner.bi}
       \iota(a,b) = ab-ba +mb-bm +la-al +t. 
    \end{equation}
\end{proposition}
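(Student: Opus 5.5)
Since $\aaa(\sigma,s)$ is $A$ with its standard affine structure, I will work throughout at the base point $o=0\in A$. There the heap retract is the vector space $A$ itself. By \eqref{ass.prod.tang} the fibre product is $a\cdot_0 b = a\np b - a\np 0 + 0\np 0 - 0\np b$. Substituting $a\np b = ab+a\sigma+\sigma b+s$ gives
\[
(ab+a\sigma+\sigma b+s)-(a\sigma+s)+s-(\sigma b+s)=ab,
\]
so $(T_0\aaa,\cdot_0)=(A,\cdot)$. Any bi-affine map $\iota\colon A\times A\to A$ decomposes as
\[
\iota(a,b)=\beta(a,b)+\mu(a)+\lambda(b)+t,
\]
with $\beta$ bilinear, $\mu,\lambda$ linear and $t=\iota(0,0)$. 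Lemma~\ref{lem.inner} lets me check the inner conditions at $o=0$ alone. So the argument reduces to linear algebra on this decomposition.

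For the ``if'' direction, take $\iota$ as in \eqref{inner.bi}. Then $T_0\iota(a,b)=\iota(a,b)-\iota(a,0)+\iota(0,0)-\iota(0,b)=ab-ba$, since the terms $mb-bm$, $la-al$ and $t$ cancel. Next, $T_0\iota(a,-)$ is the linear map $b\mapsto \iota(a,b)-\iota(a,0)=(a+m)b-b(a+m)$. This is the inner derivation $[a+m,-]$. Symmetrically, $T_0\iota(-,a)$ is $b\mapsto (b(a+l)-(a+l)b)$ up to the analogous bookkeeping, which is the inner derivation $-[a+l,-]$. Hence $\iota$ is left and right inner with the required linearization, i.e.\ an inner bi-deriffation.

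For the ``only if'' direction, the bi-deriffation condition at $0$ gives $\beta(a,b)=T_0\iota(a,b)=ab-ba$. Left innerness says that for each $a$ the map $b\mapsto ab-ba+\lambda(b)$ is an inner derivation. Since $[a,-]$ is already inner and inner derivations form a subspace, $\lambda$ itself must be inner: $\lambda=[m,-]$ for some $m\in A$. Likewise right innerness forces $\mu=[l,-]$ for some $l\in A$, giving $\mu(a)=la-al$. Substituting back yields \eqref{inner.bi}.

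The only delicate point is definitional. The definition of left/right inner deriffation speaks of the maps $T_o\iota(a,-)$, and one must decide whether this means the linearization $b\mapsto\iota(a,b)-\iota(a,o)$ or the full linearized bilinear form. I read it as the former, consistent with the proof of Lemma~\ref{lem.inner}, where $\iota(a,b)=\langle f(a)\cdot_o b, b\cdot_o f(a), \iota(a,o)\rangle$. This reading makes $\lambda$ (respectively $\mu$) inner, since subtracting the inner $[a,-]$ leaves $\lambda$. The existence of some base point $o$ in the definition is harmless, because by Lemma~\ref{lem.inner} the condition transfers to $o=0$.
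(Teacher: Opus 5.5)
Your proposal is correct and is essentially the paper's proof: decompose $\iota$ at the base point $0$ (where $\cdot_0$ is the product of $A$), read off the bilinear part $ab-ba$ from the bi-deriffation condition, and then use that $b\mapsto \iota(a,b)-\iota(a,0)$ (resp.\ $b\mapsto\iota(b,a)-\iota(0,a)$) is inner, together with the fact that inner derivations form a subspace, to force the linear parts to be inner; the converse is a direct check. The only slip is in the ``if'' direction: $T_0\iota(-,a)$ is actually $b\mapsto (l-a)b-b(l-a)$, the inner derivation by $l-a$, not $-[a+l,-]$, and this does not affect the conclusion.
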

\begin{proof}
    In general,
    $$
    \iota(a,b) = ab-ba +\lambda(a) + \mu(b) +t,
    $$
    for some linear endomorphisms $\lambda,\mu$ of $A$ and $t\in A$. The map $\iota$ is a left inner deriffation if and only if $\iota(a,b) - \iota(a,0)$ is an inner derivation of $A$, that is, $\mu(b)$ is an inner derivation. Hence, necessarily, there exists $m\in A$ such that $\mu(b) = mb-bm$. Similarly, since $\iota$ is a right inner deriffation, there exists $l\in A$, such that $\lambda(a)=la-al$, as stated. That is, if $\iota$ is an inner bi-deriffation, then it is necessarily of the form \eqref{inner.bi}. The converse is obvious.
\end{proof}

\begin{lemma}[Commutator deriffations]\label{lem.com}
    For an associative affgebra $(\mathfrak{a}, \np)$ and a bi-affine map $f:\mathfrak{a}\times \mathfrak{a}\to \mathfrak{a}$, let  
    $$
    C^f : \mathfrak{a}\times \mathfrak{a}\to \mathfrak{a}, \qquad (a,b)\mapsto \langle a\np b , b\np a, f(a,b)\rangle. 
    $$
    Then, for all $a\in \mathfrak{a}$,  $C^f(a,-)$ (resp.,\ $C^f(-,a)$) is a deriffation if and only if $f(a,-)$ (resp.,\ $f(-,a)$) is a deriffation.
\end{lemma}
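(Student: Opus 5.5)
The plan is to reduce everything to a single fibre via Proposition~\ref{equiv.deriffation_and_derivation}. Fix $o\in\aaa$ and work in the associative algebra $(T_o\aaa,\cdot_o)$; write $+,-$ for the retract operations at $o$. Since $C^f(a,-)$ and $f(a,-)$ are affine maps $\aaa\to\aaa$, each is a deriffation if and only if its linearization at $o$ is a derivation of $T_o\aaa$. So it suffices to compare $T_oC^f(a,-)$ with $T_of(a,-)$.

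The key step is to compute the linearization of $C^f(a,-)$. By definition,
\[
T_oC^f(a,-)(b)=C^f(a,b)-C^f(a,o)=\big(a\np b-a\np o\big)-\big(b\np a-o\np a\big)+\big(f(a,b)-f(a,o)\big).
\]
Formula \eqref{ass.prod.tang} (equivalently, the description $\aaa=\aaa(\sigma_o,s_o)$ via \eqref{hom.o}) gives
\[
a\np b-a\np o=ab+\sigma_o b,\qquad b\np a-o\np a=ba+b\sigma_o .
\]
Hence
\[
T_oC^f(a,-)(b)=(ab-ba)+(\sigma_o b-b\sigma_o)+T_of(a,-)(b)=\operatorname{ad}_a(b)+d^{\sigma_o}(b)+T_of(a,-)(b).
\]

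The map $\operatorname{ad}_a=a(-)-(-)a$ is an inner derivation of $T_o\aaa$. The map $d^{\sigma_o}$ is a derivation by Lemma~\ref{lem.deriv.bim}, since $\sigma_o$ is a bimultiplication, being part of a homothetic datum. Derivations form a vector subspace of $\operatorname{Lin}(T_o\aaa)$. Therefore $T_oC^f(a,-)$ is a derivation if and only if $T_of(a,-)$ is, and this proves the first claim.

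The case of $C^f(-,a)$ is handled symmetrically. Here
\[
T_oC^f(-,a)(b)=\big(ba+b\sigma_o\big)-\big(ab+\sigma_o b\big)+T_of(-,a)(b)=-\operatorname{ad}_a(b)-d^{\sigma_o}(b)+T_of(-,a)(b),
\]
and the same argument applies.

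The main obstacle is the bookkeeping in the displayed identity for $T_oC^f(a,-)$. One must carefully expand $\langle a\np b,b\np a,f(a,b)\rangle$ minus its value at $b=o$ in the retract $T_o\aaa$. Then one must check that the constant terms $o\np o=s_o$ cancel, leaving exactly the two partial products $ab+\sigma_o b$ and $ba+b\sigma_o$. I would verify this directly from \eqref{ass.prod.tang}, using $a\np b=ab+a\sigma_o+\sigma_o b+s_o$ in $T_o\aaa$.
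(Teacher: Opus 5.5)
Your proposal is correct and follows essentially the same route as the paper. You linearize $C^f(a,-)$ at a point $o$ and split it as the inner derivation $b\mapsto ab-ba$, plus the bimultiplication derivation $d^{\sigma_o}$ (your $\sigma_o b-b\sigma_o$ equals the paper's $o\np b-b\np o$, since $s_o$ cancels), plus $T_of(a,-)$, and then conclude because derivations form a subspace. Your explicit sign-flipped computation for $C^f(-,a)$ simply fills in the ``symmetric arguments'' that the paper leaves to the reader.
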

\begin{proof}
    One easily finds that, for all $a\in \mathfrak{a}$,
    $T_oC^f(a,-)$ comes out as
    $$
    b\mapsto a\np b-b\np a+o\np a-a\np o+f(a,b)-f(a,o)=(a\cdot_{o}b - b\cdot_{o}a) + (o\np b - b\np o) + (f(a,b) - f(a,o)).
    $$
    The first bracketed expression is an inner derivation on $T_o\mathfrak{a}$, and the second one is the derivation induced by a bimultiplication (see Lemma~\ref{lem.deriv.bim} and the homothetic datum given in \eqref{hom.o}). Hence, the whole map is a derivation on $(T_o\mathfrak{a},\cdot_{o})$ if and only if the last bracketed expression, that is, the linearization of $f(a,-)$ in $T_o\mathfrak{a}$, is a derivation. 
    
    The second statement is proven by symmetric arguments.    
\end{proof}

\section{Poisson affgebras}\label{sec.Poisson}
Relying on the notion of an affine derivation proposed in the previous section, this section is devoted to the introduction of Poisson affgebras. We present several approaches to the definition, as well as a result enabling the construction of such structures from a usual Poisson algebra. 
\begin{definition}\label{def.aff.Poissona}
Let $\aaa$ be an associative affgebra with affine product $\np$. We will say that $\aaa$ is a \textbf{one-sided Poisson affgebra} if there exists an affine Lie bracket $\{-,-\}$, called \textit{affine Poisson bracket}, on $\aaa$ such that, for all $z\in \aaa$, the affine maps $\{-,z\}$ are  deriffations on $\aaa$. If, additionally, $\{z,-\}$ is a deriffation for all $z\in \aaa$, then $\aaa$ is a \textbf{two-sided Poisson affgebra} or, simply, a \textbf{Poisson affgebra}.

A \textbf{morphism of Poisson affgebras} is an affine map that preserves both the affine associative product and  the affine Poisson bracket, that is, $f\colon (\aaa,\np_{\aaa},\{-,-\}_{\aaa})\rightarrow (\mathfrak{b},\np_{\mathfrak{b}},\{-,-\}_{\mathfrak{b}})$ is a morphism of Poisson affgebras if
\[f(a\np_{\aaa}b)=f(a)\np_{\mathfrak{b}} f(b),\quad f(\{a,b\}_{\mathfrak{a}})=\{f(a),f(b)\}_{\mathfrak{b}},\]
for all $a,b\in\aaa$.
\end{definition}

Since a Poisson algebra is a combination of an associative algebra and a Lie algebra whose products are compatible via the Leibniz rule \eqref{Leib.rule}, an approach that may be followed to define Poisson affgebras is the general procedure of affinization of the Leibniz rule described in \cite{BRP}. In this sense, a Poisson affgebra would be a triple $(\aaa, \np,\{-,-\})$, where $(\aaa,\np)$ is an associative affgebra and $(\aaa,\{-,-\})$ is a Lie affgebra, such that 
\begin{equation*}
    \langle \{x,z\}\np y ,\{x\np y,z\}, x\np\{y,z\}\rangle= \mathcal{P}(x,y,z)
\end{equation*}
for all $x,y,z\in \aaa$, where $\mathcal{P}\colon \aaa \times \aaa \times \aaa \rightarrow \aaa$ is a tri-affine map, called the \textit{Poissonian}, whose linearization at any point $o\in\aaa$ vanishes, i.e., $T_{o}\mathcal{P}=o$ for all $o\in\aaa$. However, one easily finds out that $\mathcal{P}(x,y,z)=\Delta^{\{-,z\}}(x,y)$ for all $x,y,z\in \aaa$, where $\Delta^{\{-,z\}}$ is the derivator of ${\{-,z\}}:\mathfrak{a}\to\aaa$ (see Definition~\ref{def.deriffation}). Consequently, 
\[T_{o}\mathcal{P}=o\iff T_{o}\Delta^{\{-,z\}}=o\textnormal{ for all }z\in\aaa\]
or, in other words, the vanishing of $T_{o}\mathcal{P}$ is equivalent to the fact that $\{-,z\}$ is a deriffation on $\aaa$ for all $z$. Hence, the approach based on the Poissonian is equivalent to the notion of one-sided Poisson affgebras.

%If the linearization of Poissonian at any element $o\in \aaa$ vanishes then $\Tan{o}{\aaa}$ becomes Poisson algebra. If we consider the affine map $\{-,z\}:\aaa \rightarrow \aaa$, then $P(x,y,z)=\Delta^{\{-,z\}}(x,y)$ , for all $x,y,z\in \aaa$. Since, $\{-,z\}$ is a deriffation for all $z\in \aaa$ if $\Tan{o}{\Delta^{\{-,z\}}}=o$,therefore vanishing of Poissonian is equivalent to the fact that $\{-,z\}$ is a deriffation. That is, being a one-sided Poisson affgebra is equivalent to the vanishing of the corresponding Poissonian.
The next result shows why Poisson affgebras are the right affinization of Poisson algebras.
\begin{proposition}
    If $(\aaa, \np,\{-,-\})$ is a one-sided Poisson affgebra, then $(T_{o}\aaa, \cdot_{o},[-,-])$ is a Poisson algebra for all $o\in\aaa$, where $\cdot_{o}$ is defined as in \eqref{ass.prod.tang}, and $[-,-]$ as in \eqref{linearLiebr}.
\end{proposition}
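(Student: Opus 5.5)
We need three facts about the fibre $T_o\aaa$: that $(T_o\aaa,\cdot_o)$ is associative, that $(T_o\aaa,[-,-])$ is a Lie algebra, and that the Leibniz rule \eqref{Leib.rule} holds. The first is Proposition~\ref{prop.tan.ass}, and the second is \cite[Theorem~2.6]{BRZ3}, which says the linearization \eqref{linearLiebr} of an affine Lie bracket is a Lie bracket on $T_o\aaa$. Bilinearity of both operations is part of those results, since they are linearizations of bi-affine maps. So the only real content is the Leibniz rule, which we want to deduce from the deriffation property via Proposition~\ref{equiv.deriffation_and_derivation}.

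Fix $z\in\aaa$ and put $D_z\coloneqq\{-,z\}$. Since $\aaa$ is one-sided Poisson, $D_z$ is a deriffation. By Proposition~\ref{equiv.deriffation_and_derivation}, its linearization
\[
T_oD_z(a)=\{a,z\}-\{o,z\},
\]
computed in $T_o\aaa$, is a derivation of $(T_o\aaa,\cdot_o)$. Comparing with \eqref{linearLiebr},
\[
[a,z]=\{a,z\}-\{o,z\}-\bigl(\{a,o\}-\{o,o\}\bigr)=T_oD_z(a)-T_oD_o(a).
\]
So $[-,z]=T_oD_z-T_oD_o$. Here $T_oD_o$ is the linearization of the deriffation $\{-,o\}$, so it is also a derivation. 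Derivations of an associative algebra form a vector subspace of $\operatorname{Lin}(T_o\aaa)$, hence $[-,z]$ is a derivation for every $z$. This is exactly the Leibniz rule $[x\cdot_o y,z]=x\cdot_o[y,z]+[x,z]\cdot_o y$.

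The point needing care is that $[-,z]$ is not itself the linearization of $\{-,z\}$: it differs by the correction term $\{a,o\}-\{o,o\}$. The step above handles this by noting that the correction is the linearization of another deriffation, namely $\{-,o\}$. The remaining checks are routine:
\begin{itemize}
\item the heap expressions in \eqref{linearLiebr} agree with the vector-space differences in $T_o\aaa$;
\item linearization at $o$ is compatible with the retract structure, so $T_oD_z$ is genuinely the linear map $a\mapsto D_z(a)-D_z(o)$.
\end{itemize}
Finally, no two-sidedness is needed, because antisymmetry of $[-,-]$, already established, makes $[z,-]=-[-,z]$ a derivation too. The statement therefore holds for one-sided Poisson affgebras and at every $o\in\aaa$.
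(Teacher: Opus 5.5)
Your proof is correct and is essentially the paper's argument. The paper expands $[x\cdot_o y,z]-[x,z]\cdot_o y-x\cdot_o[y,z]$ into affine terms and identifies it as $-T_o\Delta^{\{-,z\}}(x,y)+T_o\Delta^{\{-,o\}}(x,y)$. That is exactly your decomposition $[-,z]=T_o\{-,z\}-T_o\{-,o\}$, read through the identity $T_oD(ab)-T_oD(a)b-aT_oD(b)=-T_o\Delta^D(a,b)$ from the proof of Proposition~\ref{equiv.deriffation_and_derivation}; citing that proposition and using that derivations form a linear subspace simply spares you the explicit 24-term expansion.
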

\begin{proof}
    Let $o\in\aaa$. Taking into account \cite[Theorem 2.6]{BRZ3}, which shows that $(T_{o}\aaa,[-,-])$ is a Lie algebra, and Proposition~\ref{prop.tan.ass}, it only remains to prove that \eqref{Leib.rule} holds. Indeed, a routine calculation yields the following:

    \begin{align}\label{tangentpoisson}
    \begin{split}
              [x\cdot_{o}y,z]-[x,z]\cdot_{o}y-x\cdot_{o}[y,z]=&-\{x,z\}\np y+\{x\np y,z\}-x\np\{y,z\}\\&+\{x,z\}\np o-\{x\np o,z\}+x\np\{o,z\}\\&-\{o,z\}\np o+\{o\np o,z\}-o\np\{o,z\}\\&+\{o,z\}\np y-\{o\np y,z\}+o\np\{y,z\}\\&+\{x,o\}\np y-\{x\np y,o\}+x\np\{y,o\}\\&-\{x,o\}\np o+\{x\np o,o\}-x\np\{o,o\}\\&+\{o,o\}\np o-\{o\np o,o\}+o\np\{o,o\}\\&-\{o,o\}\np y+\{o\np y,o\}-o\np\{y,o\}.  
    \end{split}
    \end{align}

    Since $\{-,z\}$ is a deriffation for all $z\in\aaa$, the linearization of the derivator $\Delta^{\{-,z\}}$ at any $o\in\aaa$ vanishes, that is,
    \begin{align*}
        o=T_{o}\Delta^{\{-,z\}}(x,y)=&\Delta^{\{-,z\}}(x,y)-\Delta^{\{-,z\}}(x,o)+\Delta^{\{-,z\}}(o,o)-\Delta^{\{-,z\}}(o,y)\\=&\{x,z\}\np y-\{x\np y,z\}+x\np\{y,z\}\\&-\{x,z\}\np o+\{x\np o,z\}-x\np\{o,z\}\\&+\{o,z\}\np o-\{o\np o,z\}+o\np\{o,z\}\\&-\{o,z\}\np y+\{o\np y,z\}-o\np\{y,z\}
    \end{align*}
    for all $x,y,z\in\aaa$. From the above formula, we deduce that the block formed by the first four lines of \eqref{tangentpoisson} coincides with $-T_{o}\Delta^{\{-,z\}}(x,y)$, while the block formed by the last four lines corresponds to $T_{o}\Delta^{\{-,o\}}(x,y)$. Hence,
    \[[x\cdot_{o}y,z]-[x,z]\cdot_{o}y-x\cdot_{o}[y,z]=\underbrace{-T_{o}\Delta^{\{-,z\}}(x,y)}_{=o}+\underbrace{T_{o}\Delta^{\{-,o\}}(x,y)}_{=o}=o.\qedhere\]
\end{proof}

The following result answers affirmatively the question of whether any associative affgebra may be endowed with a structure of Poisson affgebra.
\begin{proposition}
    Let $(\aaa,\np)$ be an associative affgebra and $f\colon \aaa\rightarrow \aaa$ an affine map. Then $\aaa$ is a one-sided Poisson affgebra with the affine Poisson bracket defined by \begin{equation}\label{brackfff}\{a,b\}\coloneqq \langle a\np b,b\np a,f(b)\rangle\end{equation} if and only if, for all $a,b,c\in\aaa$,
    %$f$ satisfies that
    \begin{align}\label{cond_assoc_Pois}
    \begin{split}
    \langle &f(a\np b), f(b\np a), b\np f(a), f(a)\np b,f(b\np c),f(c\np b),c\np f(b),f(b)\np c,\\&f(c\np a),f(a\np c),a\np f(c),f(c)\np a,f(a)\np a,a\np f(a),f(b)\np b, b\np f(b),f(c)\np c\rangle=c\np f(c).
    \end{split}
    \end{align}
    
    Moreover, $(\aaa,\np,\{-,-\})$ is a (two-sided) Poisson affgebra if and only if $f$ is a deriffation on $(\aaa,\np)$.
\end{proposition}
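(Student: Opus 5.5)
The plan is to work throughout in a fixed retract $T_o\aaa$. There every heap expression $\langle x_1,\dots,x_{2n+1}\rangle$ is the alternating sum $x_1-x_2+\dots+x_{2n+1}$. Bi-affine and affine maps then distribute over alternating sums whose coefficients add up to $1$. In this notation the bracket \eqref{brackfff} reads $\{a,b\}=a\np b-b\np a+f(b)$. It is bi-affine because $\np$ is bi-affine and $f$ is affine. So we must check three things: affine antisymmetry, the affine Jacobi identity, and the deriffation conditions.

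\textbf{Affine antisymmetry.} This holds for every $f$:
\[
\langle\{a,b\},\{a,a\},\{b,a\}\rangle=(a\np b-b\np a+f(b))-f(a)+(b\np a-a\np b+f(a))=f(b)=\{b,b\}.
\]

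\textbf{Deriffation conditions.} Set $g(a,b)\coloneqq f(b)$. Then $\{a,b\}=C^g(a,b)$ in the notation of Lemma~\ref{lem.com}.
\begin{itemize}
\item For fixed $z$, the map $g(-,z)$ is constant. Its linearization at $o$ is the zero map, which is a derivation. By Proposition~\ref{equiv.deriffation_and_derivation}, $g(-,z)$ is therefore a deriffation. Lemma~\ref{lem.com} then shows that every $\{-,z\}$ is a deriffation, with no condition on $f$.
\item For fixed $z$, we have $g(z,-)=f$. Lemma~\ref{lem.com} shows that $\{z,-\}$ is a deriffation for all $z$ if and only if $f$ is a deriffation. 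This gives the ``moreover'' part, once the first part is established.
\end{itemize}
Hence $\aaa$ is a one-sided Poisson affgebra exactly when $\{-,-\}$ satisfies the affine Jacobi identity. The whole first statement thus reduces to showing that the affine Jacobi identity is equivalent to \eqref{cond_assoc_Pois}.

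\textbf{Affine Jacobi identity.} We expand each double bracket in the retract, using distributivity of $\np$ and affinity of $f$:
\[
\{a,\{b,c\}\}=a\np(b\np c)-a\np(c\np b)+a\np f(c)-(b\np c)\np a+(c\np b)\np a-f(c)\np a+f(b\np c)-f(c\np b)+f(f(c)).
\]
The diagonal terms simplify: the two double products cancel, leaving
\[
\{a,\{a,a\}\}=a\np f(a)-f(a)\np a+f(f(a)).
\]
We then substitute these into the alternating sum
\[
\{a,\{b,c\}\}-\{a,\{a,a\}\}+\{b,\{c,a\}\}-\{b,\{b,b\}\}+\{c,\{a,b\}\}
\]
and compare the result with $\{c,\{c,c\}\}$. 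Three families of terms appear.
\begin{itemize}
\item The triple products are the terms of the ordinary commutator Jacobi identity. They cancel in the cyclic sum by associativity of $\np$.
\item The terms $f(f(\cdot))$ cancel between each $\{x,\{y,z\}\}$ and the corresponding diagonal term.
\item The terms involving $x\np f(y)$, $f(y)\np x$ and $f(x\np y)$ survive.
\end{itemize}
Moving $\{c,\{c,c\}\}$ to the same side and reading the surviving terms as a heap expression should reproduce exactly the $17$-fold bracket in \eqref{cond_assoc_Pois}, equated to $c\np f(c)$. This would prove the equivalence.

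The main obstacle is the bookkeeping in this last step. One has to track signs correctly and check that every term matches \eqref{cond_assoc_Pois}, including its order and parity. Before matching term by term, I would first verify this on the cancelling groups: the associativity terms and the $f\circ f$ terms.
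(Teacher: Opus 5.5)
Your proposal is correct and follows essentially the same route as the paper: antisymmetry is checked directly, the affine Jacobi identity is expanded in $T_o\aaa$, where the triple products and the $f\circ f$ terms cancel and the surviving terms match \eqref{cond_assoc_Pois} term by term, and both deriffation conditions are settled by Lemma~\ref{lem.com}, using that constant maps are deriffations. One small correction: the $f(f(\cdot))$ terms do not cancel within each pair $\{x,\{y,z\}\}$, $\{x,\{x,x\}\}$; rather, the $f(f(c))$ coming from $\{a,\{b,c\}\}$ cancels the one in $\{c,\{c,c\}\}$, not the $f(f(a))$ in $\{a,\{a,a\}\}$, so the cancellation only happens across the full cyclic sum, which does not affect your conclusion.
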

\begin{proof}
    Let us begin by showing that $\aaa$ with the affine bracket $\{-,-\}$ defined in \eqref{brackfff} is a Lie affgebra if and only if \eqref{cond_assoc_Pois} holds. On the one hand, the affine antisymmetry is straightforward. On the other hand, to compute the affine Jacobi identity note that in $T_o\aaa$:
    \begin{align*}
    \{a,\{b,c\}\}-\{a,\{a,a\}\}=&a\np b\np c-b\np c\np a+f(b\np c)-a\np c\np b+c\np b\np a-f(c\np b)\\&+a\np f(c)-f(c)\np a+f(c)\np f(c)-a\np f(a)+f(a)\np a-f(a)\np f(a).
    \end{align*}
    Then, using the above identity with the suitable simplifications, it is obtained that
    \begin{align*}
        &\{a,\{b,c\}\}-\{a,\{a,a\}\}+\{b,\{c,a\}\}-\{b,\{b,b\}\}+\{c,\{a,b\}\}-\{c,\{c,c\}\}=o\\
        \iff&f(a\np b)-f(b\np a)+b\np f(a)-f(a)\np b+f(b\np c)-f(c\np b)+c\np f(b)-f(b)\np c\\&
        +f(c\np a)-f(a\np c)+a\np f(c)-f(c)\np a+f(a)\np a-a\np f(a)+f(b)\np b-b\np f(b)\\&+f(c)\np c-c\np f(c)=o,
        \end{align*}
        which is exactly the condition \eqref{cond_assoc_Pois} written in terms of the additive structure in $T_{o}\aaa.$

        Moreover, given $b\in\aaa$, by Proposition~\ref{lem.com} the map $\{-,b\}$ is a deriffation if and only if the constant map with value $f(b)$ defines a deriffation, which is always true since all constant maps are deriffations. So, it is concluded that $(\aaa, \np,\{-,-\})$ is a one-sided Poisson affgebra.

        In contrast, fixing $a\in\aaa$, Proposition~\ref{lem.com} asserts that the affine map $\{a,-\}$ is a deriffation on $\aaa$ if and only if $f$ is a deriffation, which shows the remaining statement.
\end{proof}
\begin{remark}
    Note that a sufficient condition for \eqref{cond_assoc_Pois} to hold is that, for all $a,b\in\aaa$, 
    \begin{gather}\label{suff}\langle f(a\np b),f(b\np a),b\np f(a)\rangle=f(a)\np b.\end{gather}
    Consequently, $f={\rm id}_{\aaa}$ satisfies \eqref{suff}, and then it is an example of an affine map in the conditions of the previous proposition. Therefore, $\aaa$ with $\{a,b\}\coloneqq \langle a\np b,b\np a,b\rangle$ is always a one-sided Poisson affgebra (in fact, it was already known from \cite[Proposition~3.7]{BRZ2} that this bracket gives rise to a Lie affgebra). However, $f={\rm id}_{\aaa}$ is not a deriffation on $\aaa$, so $\aaa$ endowed with the affine bracket above is not a two-sided Poisson affgebra.

    Nevertheless, given $\xi\in\aaa$, if $f=\xi$ a constant map, then $\aaa$ with the affine bracket $\{a,b\}=\langle a\np b,b\np a,\xi\rangle$ is a two-sided Poisson affgebra.
\end{remark}

The next theorem provides one with the  Poisson affgebra version of \cite[Theorem~3.1]{BRZ3} in the Lie setting or \cite[Theorem~3.5]{BRP} for Leibniz affgebras. In essence, it proves that any Poisson affgebra is obtained from a Poisson algebra equipped with a pair of derivations and a homothetic datum.
\begin{theorem}\label{main.th.poisson}
    Let $(P,\cdot ,[-,-])$ be a Poisson algebra. Let  $(\sigma,s)$ be a homothetic datum on the associative algebra $(P,\cdot)$ and $\lambda, \mu\colon P\rightarrow P$ linear endomorphisms satisfying that $(\lambda,-\mu,\lambda)$ is a generalized derivation on $(P,[-,-])$, i.e., the equality
    \begin{equation}\label{gen_der}
        \lambda([a,b])=[\lambda(a),b]-[a,\mu(b)]
    \end{equation}
    holds. Then, the associative affgebra $\paff(\sigma,s)$ induced by the homothetic datum $(\sigma,s)$ as in Proposition~\ref{affgebra_by_homothety} is a Poisson affgebra with the affine Poisson bracket
    \begin{equation} 
        \{a,b\}=[a,b]+\mu(a)+\lambda(b)+t,
    \end{equation}
    with $t\in P$, if and only if $\lambda,\mu$ are derivations on $(P,\cdot)$. We denote the resulting Poisson affgebra by $\paff(P;\sigma, s;\mu,\lambda,t).$ Furthermore, for all $o\in P$,
\begin{gather}\label{isotan}T_{o}\paff(P;\sigma, s;\mu,\lambda,t)\cong P.\end{gather}

Conversely, given any Poisson affgebra $(\aaa,\np,\{-,-\})$ and a point $o\in\aaa$, there exist linear endomorphisms $\mu,\lambda\colon T_{o}\aaa\rightarrow T_{o}\aaa$ (which are necessarily derivations on $(T_{o}\aaa,\cdot_{o})$ satisfying \eqref{gen_der}) and a point $t\in \mathfrak{a}$ such that $\aaa=\mathfrak{p}(T_{o}\aaa;\sigma_{o},s_{o};\mu,\lambda,t)$, where $(\sigma_{o},s_{o})$ is the homothetic datum given in \eqref{hom.o}.
\end{theorem}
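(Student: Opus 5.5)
The plan is to reduce everything to statements about a single fibre, using the correspondences already recalled in Section~\ref{sec.pre}. The affine structure on $P$ will be the one from Proposition~\ref{affgebra_by_homothety}. By that proposition, $\paff(\sigma,s)$ is an associative affgebra. By \cite[Theorem~3.1]{BRZ3}, the bracket $\{a,b\}=[a,b]+\mu(a)+\lambda(b)+t$ is an affine Lie bracket, precisely because $(\lambda,-\mu,\lambda)$ satisfies \eqref{gen_der}. What remains is the compatibility: $\{-,z\}$ and $\{z,-\}$ must be deriffations for every $z\in P$.

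To check this, I will apply Proposition~\ref{equiv.deriffation_and_derivation}. It suffices to linearize at the single point $o=0\in P$. The fibre $(T_0\paff(\sigma,s),\cdot_0)$ is exactly $(P,\cdot)$: expanding \eqref{ass.prod.tang} with $o=0$ gives
\[
a\np b-a\np 0+0\np 0-0\np b=ab,
\]
since the terms in $\sigma$ and $s$ cancel. Next I linearize the two affine maps. For $\{-,z\}$ at $0$ one gets $a\mapsto [a,z]+\mu(a)$, and for $\{z,-\}$ one gets $b\mapsto [z,b]+\lambda(b)$. The maps $[-,z]$ and $[z,-]$ are derivations of $(P,\cdot)$ by the Leibniz rule \eqref{Leib.rule} and antisymmetry. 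Hence $\{-,z\}$ is a deriffation if and only if $\mu$ is a derivation, and $\{z,-\}$ is a deriffation if and only if $\lambda$ is a derivation. This gives both directions of the ``if and only if''.

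For \eqref{isotan}, the associative part of the isomorphism comes from the remark after Proposition~\ref{affgebra_by_homothety}, via translations $\tau_0^o$. For the bracket, the linearization \eqref{linearLiebr} at $0$ is $[-,-]$, since $\mu,\lambda,t$ cancel. At other points I will use that $\tau_0^o$ intertwines the linearizations, exactly as in the diagram of the remark following Lemma~\ref{independence_of_chosen_point}, or as in \cite[Theorem~2.6]{BRZ3}.

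For the converse, I fix $o\in\aaa$ and work in $T_o\aaa$. By the discussion after Proposition~\ref{affgebra_by_homothety}, the associative product is $\np=\aaa(\sigma_o,s_o)$ with \eqref{hom.o}. By \cite[Theorem~3.1]{BRZ3}, the Lie affgebra has $\{a,b\}=[a,b]+\mu(a)+\lambda(b)+t$ with
\[
\mu(a)=\{a,o\}-\{o,o\},\qquad \lambda(b)=\{o,b\}-\{o,o\},\qquad t=\{o,o\},
\]
and these satisfy \eqref{gen_der}, where $[-,-]$ is given by \eqref{linearLiebr}. By the preceding proposition, $(T_o\aaa,\cdot_o,[-,-])$ is a Poisson algebra, because a Poisson affgebra is in particular one-sided. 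Finally, the direct part, already proved, shows that $\mu$ and $\lambda$ must be derivations, since $\{-,z\}$ and $\{z,-\}$ are deriffations.

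The main obstacle is bookkeeping rather than substance: I must make sure that linearizing $\{-,z\}$ at $0$ really yields $[-,z]+\mu$, with the constant part $\lambda(z)+t$ discarded, and that this matches the notion of $T_oD$ used in Proposition~\ref{equiv.deriffation_and_derivation}. I also need to correctly identify $T_0$ of $\paff(\sigma,s)$ with $(P,\cdot)$ as an algebra.
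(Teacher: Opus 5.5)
Your proposal is correct and follows essentially the same route as the paper. Both arguments get the associative and Lie structures from Proposition~\ref{affgebra_by_homothety} and the Lie-affgebra classification. Both then apply Proposition~\ref{equiv.deriffation_and_derivation} at $o=0$, where $T_0\{-,z\}=[-,z]+\mu$ and $T_0\{z,-\}=[z,-]+\lambda$, so the Leibniz rule reduces everything to $\mu,\lambda$ being derivations. Both handle the converse with $\mu(a)=\{a,o\}-\{o,o\}$, $\lambda(a)=\{o,a\}-\{o,o\}$ and $t=\{o,o\}$. The only cosmetic difference is in \eqref{isotan}: the paper writes out the fibre operations at $o$ and uses $\varphi(a)=a-o$, which is exactly the translation $\tau_o^0$ you invoke.
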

\begin{proof}
According to Proposition~\ref{affgebra_by_homothety}, $\paff(\sigma,s)$ is an associative affgebra, and \cite[Theorem~3.1]{BRZ3} states that \eqref{affine Lie bracket} defines an affine Lie bracket on $\paff(\sigma,s)$ if and only if \eqref{gen_der} holds. Due to these facts, for $\paff(\sigma,s)$ to be a Poisson affgebra, we only need to show that the maps $\{-,z\},\{z,-\}\colon P\rightarrow P$ are deriffations on the associative affgebra $\paff(\sigma,s)$ for all $z$. By Proposition~\ref{equiv.deriffation_and_derivation}, let us prove that $T_{0}\{-,z\}$ and $T_{0}\{z,-\}$ are derivations on $(P,\cdot)=\Tan{0}{\paff(\sigma,s)}$. Indeed, for all $a,b\in P$,
\begin{align*}
    &\Tan{0}{\{-,z\}}(ab)=\Tan{0}{\{-,z\}}(a)b+a\Tan{0}{\{-,z\}}(b)\\
    \iff&\{ab,z\}-\{0,z\}=(\{a,z\}-\{0,z\})b+a(\{b,z\}-\{0,z\})\\
    \iff&[ab,z]+\mu(ab)=([a,z]+\mu(a))b+a([b,z]+\mu(b))\\
    \iff&\mu(ab)=\mu(a)b+a\mu(b)\;{\footnotesize\textnormal{(by \eqref{Leib.rule} for $P$)}}
\end{align*}
and, similarly,
\begin{align*}
    \Tan{0}{\{z,-\}}(ab)=\Tan{0}{\{z,-\}}(a)b+a\Tan{0}{\{z,-\}}(b)\iff \lambda(ab)=\lambda(a)b+a\lambda(b),
\end{align*}
i.e., $\{-,z\},\{z,-\}$ are deriffations on $\paff(\sigma,s)$ if and only if $\mu,\lambda$ are derivations on $P$.

To prove the isomorphism given in \eqref{isotan}, note that the Lie bracket $[-,-]'$ in $T_{o}\paff(P;\sigma, s;\mu,\lambda,t)$ is given by 
\[[a,b]'=[a,b]-[a,o]+[o,o]-[o,b]+o,\]
whereas the associative product $\star$ is defined as follows:
\[a\star b=ab-ao+o^{2}-ob+o.\]
Therefore, the isomorphism we are looking for is $\varphi(a)=a-o$ for all $a\in P$.

To conclude, given a Poisson affgebra $(\aaa,\np,\{-,-\})$ and a point $o\in\aaa$, it is enough to consider 
\[\mu(a)=\{a,o\}-\{o,o\},\quad\lambda(a)=\{o,a\}-\{o,o\},\quad t=\{o,o\},\]
and then proceed as in \cite[Theorem~3.5]{BRP}.
\end{proof}
\begin{remark}
    If one merely asks for $\paff(P;\sigma, s;\mu,\lambda,t)$ to be a one-sided Poisson affgebra, then it suffices that $\mu$ is a derivation on $P$.
\end{remark}
\begin{example}[Darboux's Poisson affgebra]
The preceding result allows us to construct an example of an infinite dimensional Poisson affgebra whose fibre is the Poisson algebra $\mathbb{R}[x,y]$, the ring of polynomials in two variables, with the Darboux bracket \eqref{darboux}: \[[f,g]=\frac{\partial f}{\partial x}\frac{\partial g}{\partial y}-\frac{\partial f}{\partial y}\frac{\partial g}{\partial x},\] which is a Poisson subalgebra of $C^{\infty}(\mathbb{R}^{2})$. In this example we work with only two variables to make the computations as simple as possible. 

Thus, the problem reduces to finding derivations $\mu,\lambda$ in $\mathbb{R}[x,y]$ satisfying \eqref{gen_der}. Recall that derivations on $C^{\infty}(\mathbb{R}^{2})\supset \mathbb{R}[x,y]$ are exactly the smooth vector fields on $\mathbb{R}^{2}$. Hence, $\mu,\lambda$ must be of the form 
\[\mu=\mu_{1}\frac{\partial }{\partial x}+\mu_{2}\frac{\partial }{\partial y},\qquad\lambda=\lambda_{1}\frac{\partial }{\partial x}+\lambda_{2}\frac{\partial }{\partial y},\]
with $\mu_{i},\lambda_{i}\in \mathbb{R}[x,y]$ for $i=1,2$. Therefore, to derive the conditions on $\mu$ and $\lambda$ ensuring that \eqref{gen_der} holds, suffices it to evaluate the identity \eqref{gen_der} on monomials, $M\coloneqq\{f_{m,n}(x,y)\coloneqq x^{m}y^{n}\,\colon\,m,n\in\mathbb{N}\}$, as $\mu$ and $\lambda$ are linear and the monomials generate $\mathbb{R}[x,y]$ as a real vector space. Let $f_{m,n},f_{k,l}\in M$. For the computations that follow, we shall make use of the following properties of monomials:
\begin{gather*}
    \frac{\partial f_{m,n}}{\partial x}=mf_{m-1,n},\qquad \frac{\partial f_{m,n}}{\partial y}=nf_{m,n-1},\qquad f_{m,n}f_{k,l}=f_{m+k,n+l},\\
    [f_{m,n},f_{k,l}]=(ml-nk)f_{m+k-1,n+l-1},\\
    \mu(f_{m,n})=m\mu_{1}f_{m-1,n}+n\mu_{2}f_{m,n-1},\qquad \lambda(f_{m,n})=m\lambda_{1}f_{m-1,n}+n\lambda_{2}f_{m,n-1}.
\end{gather*}
On the one hand, 
\begin{align}\label{genderDarb1}
\begin{split}
    \lambda([f_{m,n},f_{k,l}])=&(ml-nk)\lambda(f_{m+k-1,n+l-1})\\=&(ml-nk)\big[(m+k-1)\lambda_{1}f_{m+k-2,n+l-1}+(n+l-1)\lambda_{2}f_{m+k-1,n+l-2}\big].
\end{split}
\end{align}
On the other hand,
$$
%\resizebox{\linewidth}{!}{
\begin{aligned}
    [\lambda(f_{m,n}),&f_{k,l}]=[m\lambda_{1}f_{m-1,n}+n\lambda_{2}f_{m,n-1},f_{k,l}]\\=&m[\lambda_{1}f_{m-1,n},f_{k,l}]+n[\lambda_{2}f_{m,n-1},f_{k,l}]
    \\=&m\left(\frac{\partial \lambda_{1}f_{m-1,n}}{\partial x}\frac{\partial f_{k,l}}{\partial y}-\frac{\partial \lambda_{1}f_{m-1,n}}{\partial y}\frac{\partial f_{k,l}}{\partial x}\right)+n\left(\frac{\partial \lambda_{2}f_{m,n-1}}{\partial x}\frac{\partial f_{k,l}}{\partial y}-\frac{\partial \lambda_{2}f_{m,n-1}}{\partial y}\frac{\partial f_{k,l}}{\partial x}\right)\\=&m\left\{l\left(\frac{\partial\lambda_{1}}{\partial x}f_{m-1,n}+(m-1)\lambda_{1}f_{m-2,n}\right)f_{k,l-1}-k\left(\frac{\partial \lambda_{1}}{\partial y}f_{m-1,n}+n\lambda_{1}f_{m-1,n-1}\right)f_{k-1,l}\right\}\\&+n\left\{l\left(\frac{\partial \lambda_{2}}{\partial x}f_{m,n-1}+m\lambda_{2}f_{m-1,n-1}\right)f_{k,l-1}-k\left(\frac{\partial \lambda_{2}}{\partial y}f_{m,n-1}+(n-1)\lambda_{2}f_{m,n-2}\right)f_{k-1,l}\right\}\\=&\left(ml\frac{\partial \lambda_{1}}{\partial x}-nk\frac{\partial \lambda_{2}}{\partial y}\right)f_{m+k-1,n+l-1}+\big[ml(m-1)\lambda_{1}-kmn\lambda_{1}\big]f_{m+k-2,n+l-1}\\&-km\frac{\partial\lambda_{1}}{\partial y}f_{m+k-2,n+l}+nl\frac{\partial \lambda_{2}}{\partial x}f_{m+k,n+l-2}+\big[mnl\lambda_{2}-kn(n-1)\lambda_{2}\big]f_{m+k-1,n+l-2},
\end{aligned}
%}
$$
and then, using the previous identity, one obtains that
\begin{align}\label{genderDarb2}
\begin{split}
[\lambda(f_{m,n}),f_{k,l}]-&[f_{m,n},\mu(f_{k,l})]=[\lambda(f_{m,n}),f_{k,l}]+[\mu(f_{k,l}),f_{m,n}]\\=&\left\{ml\left(\frac{\partial\lambda_{1}}{\partial x}-\frac{\partial \mu_{2}}{\partial y}\right)+nk\left(\frac{\partial\mu_{1}}{\partial x}-\frac{\partial \lambda_{2}}{\partial y}\right)\right\}f_{m+k-1,n+l-1}\\&+\left\{\big[ml(m-1)-kmn\big]\lambda_{1}+\big[kn(k-1)-kml\big]\mu_{1}\right\}f_{m+k-2,n+l-1}
\\&-\left\{km\left(\frac{\partial \lambda_{1}}{\partial y}+\frac{\partial \mu_{1}}{\partial y}\right)\right\}f_{m+k-2,n+l}\\&+\left\{nl\left(\frac{\partial \lambda_{2}}{\partial x}+\frac{\partial \mu_{2}}{\partial x}\right)\right\}f_{m+k,n+l-2}\\&+\left\{\big[mln-kn(n-1)\big]\lambda_{2}+\big[nkl-ml(l-1)\big]\mu_{2}\right\}f_{m+k-1,n+l-2}.
\end{split}
\end{align}
Consequently, the required conditions are obtained by comparing the coefficients of monomials of equal degree in equalities \eqref{genderDarb1} and \eqref{genderDarb2}:
\begin{itemize}
    \item $f_{m+k-2,n+l-1}$
\begin{align*}
    &(ml-nk)(m+k-1)\lambda_{1}=\big[ml(m-1)-kmn\big]\lambda_{1}+\big[kn(k-1)-kml\big]\mu_{1}\\\iff&[kml+nk-nk^{2}](\lambda_{1}+\mu_{1})=0\iff \lambda_{1}+\mu_{1}=0.
\end{align*}
\item $f_{m+k-1,n+l-2}$
\begin{align*}
    &(ml-nk)(n+l-1)\lambda_{2}=\big[mln-kn(n-1)\big]\lambda_{2}+\big[nkl-ml(l-1)\big]\mu_{2}\\\iff& [ml^{2}-ml-nkl](\lambda_{2}+\mu_{2})=0\iff\lambda_{2}+\mu_{2}=0.
\end{align*}
\end{itemize}
Therefore, $\mu=-\lambda$ and then, the conditions obtained by matching the coefficients of the monomials $f_{m+k-2,n+l}$ and $f_{m+k,n+l-2}$ are automatically satisfied. Thus, it only remains to determine the conditions arising from the monomial $f_{m+k-1,n+l-1}$:
\begin{itemize}
    \item $f_{m+k-1,n+l-1}$
    \begin{align*}
        &ml\left(\frac{\partial\lambda_{1}}{\partial x}-\frac{\partial \mu_{2}}{\partial y}\right)+nk\left(\frac{\partial\mu_{1}}{\partial x}-\frac{\partial \lambda_{2}}{\partial y}\right)=0\\\overset{\mu=-\lambda}{\iff}&(ml-nk)\left(\frac{\partial\lambda_{1}}{\partial x}+\frac{\partial \lambda_{2}}{\partial y}\right)=0\iff \frac{\partial\lambda_{1}}{\partial x}+\frac{\partial \lambda_{2}}{\partial y}=0.
    \end{align*}
\end{itemize}

So, we conclude that $\mathbb{R}[x,y]$ is a Poisson affgebra with the affine Poisson bracket given by 
\[\{f,g\}=[f,g]+\lambda(g-f)+h,\]
where $h$ is any polynomial in $\mathbb{R}[x,y]$ and $\lambda=\lambda_{1}\frac{\partial}{\partial x}+\lambda_{2}\frac{\partial}{\partial y}$ such that, if $\lambda_{1}$ is a polynomial of the form $\lambda_{1}(x,y)=\sum_{k,l}\alpha_{kl}x^{k}y^{l}$, $\alpha_{kl}\in\mathbb{R}$, then:
\begin{align*}
   & \frac{\partial\lambda_{2}}{\partial y}(x,y)=-\frac{\partial\lambda_{1}}{\partial x}(x,y)=-\sum_{k,l}k\alpha_{kl}x^{k-1}y^{l}\\\implies& \lambda_{2}(x,y)=-\sum_{k,l}k\alpha_{kl}x^{k-1}\int y^{l}\,{\rm d}y=-\sum_{k,l}k\alpha_{kl}x^{k-1}\left(\frac{y^{l+1}}{l+1}+c(x)\right).
\end{align*}
\end{example}
\begin{theorem}\label{hom.Poisson.aff}
    Let $(P,\cdot,[-,-])$ and $(P',\ast,[-,-]')$ be Poisson algebras. An affine map $$\varphi \colon \paff(P;\sigma,s;\mu,\lambda,t) \rightarrow \paff(P';\sigma',s';\mu',\lambda',t')$$ is a morphism of Poisson affgebras if and only if there exist a Poisson algebra morphism $\psi\colon P\rightarrow P'$ and a point $q'\in P'$ such that, for all $a\in P$, the following conditions hold:
    \begin{subequations}\label{hom.a-f}
        \begin{gather}\label{hom_a}
            \psi(s)+q'=(q')^{2}+q'\sigma'+\sigma'q'+s',\\
            \label{hom_b}
            \psi(\sigma a)=q'\ast \psi(a)+\sigma'\psi(a),\\
        \label{hom_c}
            \psi(a\sigma)=\psi(a)\ast q'+\psi(a)\sigma',\\
        \label{hom_d}
            \psi(t)+q'=(\mu'+\lambda')(q')+t',\\
        \label{hom_e}
            \psi(\mu(a))=(\mu'+{\rm ad}_{q'}')(\psi(a)),\\
        \label{hom_f}
            \psi(\lambda(a))=(\lambda'-{\rm ad}'_{q'})(\psi(a)).
        \end{gather}
    \end{subequations}
\end{theorem}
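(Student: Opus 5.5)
The plan is to decompose the affine map $\varphi$ into its linear and constant parts and then compare both sides of each defining identity term by term. Since $P$ and $P'$ are vector spaces viewed as affine spaces, every affine map $\varphi\colon P\to P'$ has the form $\varphi(a)=\psi(a)+q'$, where $\psi\coloneqq\overrightarrow{\varphi}\colon P=T_0P\to P'=T_0P'$ is linear and $q'\coloneqq\varphi(0)$. This gives a bijection between affine maps $\varphi$ and pairs $(\psi,q')$. We then need to show that $\varphi$ preserves $\np$ and $\{-,-\}$ if and only if $\psi$ is a Poisson algebra morphism and \eqref{hom.a-f} holds.

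For the associative product we expand both sides using Proposition~\ref{affgebra_by_homothety}:
\[
\varphi(a\np b)=\psi(ab)+\psi(a\sigma)+\psi(\sigma b)+\psi(s)+q',
\]
\[
\varphi(a)\np'\varphi(b)=(\psi(a)+q')\ast(\psi(b)+q')+(\psi(a)+q')\sigma'+\sigma'(\psi(b)+q')+s'.
\]
Both sides are bi-affine in $(a,b)$, so equality for all $a,b$ is equivalent to equality of the bilinear parts, the parts linear in $a$ alone, the parts linear in $b$ alone, and the constants. Concretely, we evaluate at $(0,0)$, at $(a,0)$ and at $(0,b)$, and then linearize. The bilinear part gives $\psi(ab)=\psi(a)\ast\psi(b)$. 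The constant part gives \eqref{hom_a}. The part in $a$ gives $\psi(a\sigma)=\psi(a)\ast q'+\psi(a)\sigma'$, which is \eqref{hom_c}, and the part in $b$ gives \eqref{hom_b}. Each of these steps is an equivalence.

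For the bracket we proceed in the same way with $\{a,b\}=[a,b]+\mu(a)+\lambda(b)+t$:
\[
\varphi(\{a,b\})=\psi([a,b])+\psi\mu(a)+\psi\lambda(b)+\psi(t)+q',
\]
\[
\{\varphi(a),\varphi(b)\}'=[\psi(a)+q',\psi(b)+q']'+\mu'(\psi(a)+q')+\lambda'(\psi(b)+q')+t'.
\]
Bilinearity and $[q',q']'=0$ give the following components. The bilinear part is $\psi([a,b])=[\psi(a),\psi(b)]'$. The constant part is $\psi(t)+q'=\mu'(q')+\lambda'(q')+t'$, which is \eqref{hom_d}. The part in $a$ is $\psi\mu(a)=[\psi(a),q']'+\mu'\psi(a)$. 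The part in $b$ is $\psi\lambda(b)=[q',\psi(b)]'+\lambda'\psi(b)$. These become \eqref{hom_e} and \eqref{hom_f} once ${\rm ad}'_{q'}$ is read with the sign convention ${\rm ad}'_{q'}(x)=[x,q']'$, so that $[q',x]'=-{\rm ad}'_{q'}(x)$ by antisymmetry. Combining the bilinear conditions shows that $\psi$ is a Poisson algebra morphism.

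The converse is immediate: given $\psi$ and $q'$ satisfying \eqref{hom.a-f}, set $\varphi=\psi+q'$, and the same expansions show that all components agree. The only delicate points are bookkeeping. First, we must check that the component-separation argument is legitimate; it is, because a bi-affine map vanishes identically if and only if its value at $(0,0)$, its partial linearizations, and its bilinear part all vanish. Second, we must fix the sign convention for ${\rm ad}'$ so that it matches \eqref{hom_e}--\eqref{hom_f}. No substantial obstacle is expected beyond these.
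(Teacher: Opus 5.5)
Your proposal is correct and follows essentially the same route as the paper: you write $\varphi=\psi+q'$ with $q'=\varphi(0)$, expand both preservation identities, and separate them into constant, linear-in-$a$, linear-in-$b$ and bilinear parts, which the paper does by successively setting $a=b=0$, then one variable to $0$, then the other. Your convention ${\rm ad}'_{q'}(x)=[x,q']'$ is also the one the paper uses implicitly, as its later matrix for ${\rm ad}_q$ in case (2E) confirms.
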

\begin{proof}
    If $\varphi\colon \paff(P;\sigma,s;\mu,\lambda,t) \rightarrow \paff(P';\sigma',s'\mu',\lambda',t')$ is an affine map, then there exists $\psi\colon P\rightarrow P'$ a linear transformation such that $\psi(a)=\varphi(a)-\varphi(0)$. Setting $q'\coloneqq \varphi(0)$, we obtain that $\varphi=\psi+q'$.
    
    We will derive first the conditions for $\varphi$ to be a morphism of associative affgebras, that is,
    \begin{align}\label{cond.1homomorphism}
        \begin{split}
        &\varphi(a\np b)=\varphi(a)\np' \varphi(b)\\
        \iff&\psi(a\np b)+q'=(\psi(a)+q')\np'(\psi(b)+q')\\
        \iff&\psi(ab)+\psi(a\sigma)+\psi(\sigma b)+\psi(s)+q'\\
        &=\psi(a)\ast \psi(b)+\psi(a)\ast q'+q'\ast \psi(b)+q'\ast q'+\psi(a)\sigma'+q'\sigma'+\sigma'\psi(b)+\sigma'q'+s'.
        \end{split}
    \end{align}
    Setting $a=b=0$ in \eqref{cond.1homomorphism} one obtains \eqref{hom_a}, and thus \eqref{cond.1homomorphism} reduces to
    \begin{equation}\label{cond.2homomorphism}
        \psi(ab)+\psi(a\sigma)+\psi(\sigma b)=\psi(a)\ast \psi(b)+\psi(a)\ast q'+q'\ast \psi(b)+\psi(a)\sigma'+\sigma'\psi(b).
    \end{equation}
    Setting $a=0$ in \eqref{cond.2homomorphism} we obtain \eqref{hom_b}, and then \eqref{cond.2homomorphism} becomes
    \begin{equation}\label{cond.3homomorphism}
        \psi(ab)+\psi(a\sigma)=\psi(a)\ast \psi(b)+\psi(a)\ast q'+\psi(a)\sigma'.
    \end{equation}
    Finally, setting $b=0$ in \eqref{cond.3homomorphism} yields \eqref{hom_c} as well as the fact that $\psi$ preserves the associative product between the algebras $(P,\cdot)$ and $(P',\ast)$. 
    
    In order $\varphi$ to preserve the affine Poisson bracket, the following conditions must be also satisfied:
    \begin{align}\label{cond.4homomorphism}
    \begin{split}
        &\varphi(\{a,b\})=\{\varphi(a),\varphi(b)\}'\\
        \iff&\psi(\{a,b\})+q'=\{\psi(a)+q',\psi(b)+q'\}'\\
        \iff&\psi([a,b])+\psi(\mu(a))+\psi(\lambda(b))+\psi(t)+q'\\
        &=[\psi(a),\psi(b)]'+[\psi(a),q']'+[q',\psi(b)]'+\mu'(\psi(a))+\mu'(q')+\lambda'(\psi(b))+\lambda'(q')+t'.
        \end{split}
    \end{align}
    Setting $a=b=0$ in \eqref{cond.4homomorphism} leads to \eqref{hom_d}, and thus \eqref{cond.4homomorphism} simplifies to
    \begin{equation}\label{cond.5homomorphism}
        \psi([a,b])+\psi(\mu(a))+\psi(\lambda(b))=[\psi(a),\psi(b)]'+[\psi(a),q']'+[q',\psi(b)]'+\mu'(\psi(a))+\lambda'(\psi(b)).
    \end{equation}
    Setting $b=0$ in \eqref{cond.5homomorphism}, \eqref{hom_e} is obtained, and then \eqref{cond.5homomorphism} reduces to
    \begin{equation}\label{cond.6homomorphism}
        \psi([a,b])+\psi(\lambda(b))=[\psi(a),\psi(b)]'+[q',\psi(b)]'+\lambda'(\psi(b)).
    \end{equation}
    Finally, setting $a=0$ in \eqref{cond.6homomorphism} yields \eqref{hom_f} as well as the fact that $\psi$ is a morphism of Lie algebras between $(P,[-,-])$ and $(P',[-,-]')$.

    The converse is straightforward.
\end{proof}

The next corollary provides a characterization of the isomorphisms between the type of Poisson affgebras introduced in Theorem~\ref{main.th.poisson}. In contrast with the preceding case, the fact that $\varphi$ is bijective ensures that the homothetic datum $(\sigma',s')$, the derivations $\mu'$ and $\lambda'$, as well as the element $t'$, are uniquely determined. This result enables the classification of Poisson affebras up to isomorphisms in terms of their dimension, as we shall see later on in Section \ref{sec.ex}.
\begin{corollary}\label{iso.Poisson.aff}
 Let $(P,\cdot,[-,-])$ and $(P',\ast,[-,-]')$ be Poisson algebras. There exists a Poisson affgebra isomorphism $\varphi \colon \paff(P;\sigma,s;\mu,\lambda,t) \rightarrow \paff(P';\sigma',s';\mu',\lambda',t')$ if and only if there exist a Poisson algebra isomorphism $\psi\colon P\rightarrow P'$ and a point $q\in P$ such that  
 \begin{subequations}\label{iso.a-f}
     \begin{gather}\label{iso_a}
         s'=\psi(s+q+q^2-q\sigma-\sigma q),\\
\label{iso_b}
         a\sigma'=\psi(\psi^{-1}(a)\sigma-\psi^{-1}(a)q),\\
\label{iso_c}
         \sigma' a=\psi(\sigma\psi^{-1}(a)-q\psi^{-1}(a)),\\
\label{iso_d}
         t'=\psi(t+q-(\mu+\lambda)(q)),\\
\label{iso_e}
         \mu'=\psi(\mu-{\rm ad}_{q})\psi^{-1},\\
\label{iso_f}
         \lambda'=\psi(\lambda+{\rm ad}_{q})\psi^{-1},
     \end{gather}
 \end{subequations}
 for all $a\in P$.
\end{corollary}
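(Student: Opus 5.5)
The plan is to deduce the corollary directly from Theorem~\ref{hom.Poisson.aff}, by solving the conditions \eqref{hom.a-f} for the primed data once the linear part is invertible.

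\emph{Forward direction.} Let $\varphi$ be an isomorphism of Poisson affgebras. As in the proof of Theorem~\ref{hom.Poisson.aff}, write $\varphi=\psi+q'$ with $\psi(a)=\varphi(a)-\varphi(0)$ and $q'=\varphi(0)$.
\begin{itemize}
\item[(1)] Since $\varphi$ is a bijective affine map, its linear part $\psi$ is a linear bijection: $\psi$ is $\varphi$ composed with a translation. Theorem~\ref{hom.Poisson.aff} already shows that $\psi$ is a Poisson algebra morphism, so it is a Poisson algebra isomorphism.
\item[(2)] Set $q\coloneqq\psi^{-1}(q')\in P$, so that $q'=\psi(q)$. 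Then rewrite each of \eqref{hom_a}--\eqref{hom_f}, repeatedly using that $\psi$ is multiplicative and preserves brackets.
\end{itemize}
The individual conditions transform as follows.
\begin{itemize}
\item[(a)] Substitute $a=\psi^{-1}(x)$ in \eqref{hom_c}. This gives $x\sigma'=\psi(\psi^{-1}(x)\sigma)-x\ast\psi(q)=\psi(\psi^{-1}(x)\sigma-\psi^{-1}(x)q)$, which is \eqref{iso_b}. Symmetrically, \eqref{hom_b} gives \eqref{iso_c}.
\item[(b)] Insert these into \eqref{hom_a}, using $q'\sigma'=\psi(q\sigma-q^2)$, $\sigma'q'=\psi(\sigma q-q^2)$ and $(q')^2=\psi(q^2)$. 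Solving for $s'$, the $q^2$ terms combine to give $s'=\psi(s+q+q^2-q\sigma-\sigma q)$, i.e.\ \eqref{iso_a}.
\item[(c)] From \eqref{hom_e} we get $\mu'=\psi\mu\psi^{-1}-{\rm ad}'_{\psi(q)}$. Since $\psi$ is a Lie isomorphism, ${\rm ad}'_{\psi(q)}=\psi\,{\rm ad}_q\,\psi^{-1}$, which yields \eqref{iso_e}. The same argument applied to \eqref{hom_f} yields \eqref{iso_f}.
\item[(d)] Adding \eqref{iso_e} and \eqref{iso_f}, the inner terms cancel and $\mu'+\lambda'=\psi(\mu+\lambda)\psi^{-1}$. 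Substituting into \eqref{hom_d} gives $t'=\psi(t+q-(\mu+\lambda)(q))$, i.e.\ \eqref{iso_d}.
\end{itemize}

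\emph{Converse.} Given a Poisson algebra isomorphism $\psi$ and $q\in P$ satisfying \eqref{iso.a-f}, define $\varphi\coloneqq\psi+\psi(q)$ and put $q'=\psi(q)$.
\begin{itemize}
\item[(1)] The algebra in the previous steps is reversible, so conditions \eqref{hom.a-f} hold. By Theorem~\ref{hom.Poisson.aff}, $\varphi$ is therefore a morphism of Poisson affgebras.
\item[(2)] $\varphi$ is bijective because $\psi$ is.
\item[(3)] Its inverse is affine and preserves both affine operations, because applying $\varphi^{-1}$ to the identities $\varphi(a\np b)=\varphi(a)\np'\varphi(b)$ and $\varphi(\{a,b\})=\{\varphi(a),\varphi(b)\}'$ does this directly. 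Hence $\varphi$ is an isomorphism.
\end{itemize}

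\emph{Main obstacle.} The only delicate point is bookkeeping. One must keep the sign convention of ${\rm ad}'_{q'}$ consistent with the one used in the proof of Theorem~\ref{hom.Poisson.aff}: there, setting $b=0$ in \eqref{cond.5homomorphism} produces $[\psi(a),q']'$. One must also track the $q^2$ terms in \eqref{iso_a} carefully. Everything else is substitution.
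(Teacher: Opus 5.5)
Your proposal is correct and follows the paper's own route: write $\varphi=\psi+q'$, set $q=\psi^{-1}(q')$, and rearrange \eqref{hom.a-f} using the conjugation identities $\psi\,{\rm ad}_q\,\psi^{-1}={\rm ad}'_{q'}$, $\psi\circ(-\cdot q)\circ\psi^{-1}=-\ast q'$ and $\psi\circ(q\cdot -)\circ\psi^{-1}=q'\ast -$. Your explicit check that the rearrangement is reversible, and that $\psi+\psi(q)$ is a bijective morphism whose inverse is also a morphism, fills in the converse direction that the paper leaves implicit.
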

 \begin{proof} This result follows from Theorem~\ref{hom.Poisson.aff} by rearranging the conditions \eqref{hom.a-f}. In this situation, since $\varphi$ is an isomorphism, its linear part $\psi=\varphi-q'$, with $q'\coloneqq \varphi(0)$, is an isomorphism of vector spaces between $P$ and $P'$. Set $q\coloneqq \psi^{-1}(q')\in P.$ It is easy to compute that the following equalities hold:
\begin{gather*}
 \psi \circ {\rm ad}_{q}\circ \psi^{-1}={\rm ad}'_{q'},\qquad
    \psi \circ (-\cdot q)\circ \psi^{-1}=-\ast q',\qquad \psi \circ (q\cdot -)\circ \psi^{-1}=q'\ast-.
\end{gather*}
Therefore, using the previous identities one easily rearranges conditions \eqref{hom.a-f} in Theorem~\ref{hom.Poisson.aff} to obtain conditions \eqref{iso.a-f}.  
\end{proof} 

Furthermore, it is worth emphasizing that the preceding corollary forces the fibres of two isomorphic Poisson affgebras to be isomorphic as Poisson algebras.

\section{Examples and classification of Poisson affgebras}\label{sec.ex}

As mentioned above, Corollary \ref{iso.Poisson.aff} provides a method to classify Poisson affgebras, up to isomorphisms, according to their dimension. In each case, we will rely on the existing dimension-based classifications of Poisson algebras in the linear setting, since Theorem \ref{main.th.poisson} establishes that every Poisson affgebra has a Poisson algebra of the same dimension as its fibre.

\subsection{Classification of 1-dimensional Poisson affgebras}\label{clas1dim}
Let $(P,\cdot,[-,-])$ be a 1-dimensional Poisson algebra with basis $\{e\}$. In this case, the only possible Lie algebra is the abelian one, i.e., $[-,-]=0$. Therefore, \eqref{Leib.rule} holds independently of the chosen associative product. Let us assume that $e\cdot e=e^{2}=\alpha e$, $\sigma=(\overleftarrow{\sigma},\overrightarrow{\sigma})$ is a double operator on $P$ with
\[e\sigma=\overleftarrow{\sigma}(e)=le,\qquad \sigma e=\overrightarrow{\sigma}(e)=re,\]
and $s=se$ such that $\alpha,l,r,s\in \mathbb{F}$. Here and below we use the same letter and font to denote both an element of the one-dimensional vector space $\mathbb{F}$ as well as its coordinate with respect to the chosen basis $\{e\}$ for $\mathbb{F}$.
\begin{lemma}
    With the notations above, $(\sigma, s)$ is a homothetic datum on $P$ if and only if one of the following cases hold: 
    \begin{itemize}
        \item $l=r$ (in other words, $\overrightarrow{\sigma}=\overleftarrow{\sigma}$) and $l^{2}-l-s\alpha=0$.  
        \item $\alpha=0$, $s=0$ with \emph{$\big[$}$l=0$ and $r=1$ (equivalently, $\overleftarrow{\sigma}=0$ and $\overrightarrow{\sigma}={\rm id}$)\emph{$\big]$} or \emph{$\big[$}$l=1$ and $r=0$ (equivalently, $\overrightarrow{\sigma}={\rm id}$ and $\overleftarrow{\sigma}=0$)\emph{$\big]$}.
    \end{itemize}
\end{lemma}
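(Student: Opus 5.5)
The plan is to unpack Definitions~\ref{Def: dp} and~\ref{Def: hd} in the basis $\{e\}$. Every condition is bilinear or linear in its arguments, so it suffices to check it on $e$, and each one becomes a single scalar equation in $\alpha,l,r,s$. The lemma should then follow from an elementary case split on whether $l=r$.

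\emph{Step 1: the double homothetism conditions.} We evaluate the three bimultiplication identities \eqref{bimultiplication} on $a=b=e$.
\begin{itemize}
\item $\sigma(ee)=\sigma(\alpha e)=\alpha r e$ and $(\sigma e)e=r\alpha e$, so this identity holds automatically.
\item Likewise $(ee)\sigma=\alpha l e=e(e\sigma)$, which is also automatic.
\item The middle identity $a(\sigma b)=(a\sigma)b$ gives $r\alpha e=l\alpha e$, i.e.\ $\alpha(r-l)=0$.
\end{itemize}
The homothetism condition $\sigma(e\sigma)=(\sigma e)\sigma$ reads $lr=rl$, which holds trivially.

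\emph{Step 2: the homothetic datum conditions.} Condition \eqref{commuts}, $\sigma s=s\sigma$, becomes $rs=ls$, i.e.\ $s(r-l)=0$. For \eqref{homothetic datum} we compare the left and right components separately, noting that $\bar s$ acts on $e$ by $s\cdot e=e\cdot s=s\alpha e$.
\begin{itemize}
\item The left component $\overrightarrow{\sigma}^2=\overrightarrow{\sigma}+\overrightarrow{\bar s}$ gives $r^2=r+s\alpha$.
\item The right component gives $l^2=l+s\alpha$.
\end{itemize}
So $(\sigma,s)$ is a homothetic datum if and only if all four of the following hold:
\[
\alpha(r-l)=0,\qquad s(r-l)=0,\qquad r^2-r-s\alpha=0,\qquad l^2-l-s\alpha=0.
\]

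\emph{Step 3: case split.}
\begin{itemize}
\item If $l=r$, the first two equations hold automatically, and the last two coincide. This leaves exactly $l^2-l-s\alpha=0$, which is the first case of the lemma.
\item If $l\neq r$, the first two equations force $\alpha=0$ and $s=0$. The remaining equations become $r^2=r$ and $l^2=l$, so $l,r\in\{0,1\}$ with $l\ne r$. Hence $(l,r)=(0,1)$ or $(1,0)$, which is the second case. In the second alternative of the statement, the phrase in parentheses should read $\overleftarrow{\sigma}={\rm id}$ and $\overrightarrow{\sigma}=0$.
\end{itemize}
The converse holds because each listed case satisfies the four scalar equations.

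No step is a genuine obstacle. The only place needing care is the bookkeeping in Step~2: $\sigma^2$ and $\bar s$ must be read componentwise as double operators, and $s$ denotes both the element $se$ and its coordinate, so that $\bar s$ multiplies by $s\alpha$.
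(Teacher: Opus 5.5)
Your proof is correct and takes the same approach as the paper. Both reduce the axioms to scalar equations in $\alpha,l,r,s$ (the middle bimultiplication identity gives $\alpha(r-l)=0$, \eqref{commuts} gives $s(r-l)=0$, and the two components of \eqref{homothetic datum} give $r^2=r+s\alpha$ and $l^2=l+s\alpha$), then split on whether $l=r$. You check the automatically satisfied axioms and the converse explicitly, which the paper omits. You are also right that the parenthetical in the second alternative should read $\overleftarrow{\sigma}={\rm id}$ and $\overrightarrow{\sigma}=0$; this matches family (1B) later in the paper.
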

\begin{proof}
    For the sake of brevity, we will only verify the axioms that yield the conditions in the statement; all other axioms hold automatically. 
    
    Let $a,b\in P$ such that $a=ae$ and $b=be$ with $a,b\in\mathbb{F}.$ It is obtained that
    \begin{align*}
        a(\sigma b)&=(ae)(\sigma(be))=ab\,e(\sigma e)=abr\,e^{2}=abr\alpha\,e,
    \end{align*}
    whereas,
    \begin{align*}
        (a\sigma)b&=((ae)\sigma)(be)=ab\,(e\sigma)e=abl\,e^{2}=abl\alpha\,e.
    \end{align*}
    Then, \eqref{bimultiplication} holds if and only if $l=r$ or $\alpha=0$.

    If $l=r$, then 
    \begin{align*}
        a\sigma^{2}=((ae)\sigma)\sigma=al^{2}\,e
    \end{align*}
    and 
    \begin{align*}
        a(\sigma+\overline{s})=a(e\sigma+se^{2})=a(le+s\alpha e)=a(l+s\alpha)\,e.
    \end{align*}
    Consequently, \eqref{homothetic datum} holds if and only if $l^{2}=l+s\alpha$.

    In the case that $l\neq r$, necessarily $\alpha=0$, and then, by \eqref{commuts}, one easily obtains that $s=0$. Thus, arguing as in the previous computation, by \eqref{homothetic datum} it is obtained that $l^2-l=0=r^{2}-r$.
\end{proof}

Let us consider $\lambda,\mu\in\operatorname{Lin}(P)$ such that $\lambda(e)=\lambda e$ and $\mu(e)=\mu e$ with $\lambda,\mu\in\mathbb{F}.$ Since $[-,-]=0$, \eqref{gen_der} holds regardless of the values of $\lambda$ and $\mu$. Hence, Theorem \ref{main.th.poisson} claims that $\paff(P;\sigma, s;\mu,\lambda,t)$ is a Poisson affgebra if and only if $\lambda$ and $\mu$ are derivations on $(P,\cdot)$. There are two possible scenarios: if the associative product vanishes, that is, $\alpha=0$, then $\lambda$ and $\mu$ can be any linear map; however, in case that $\alpha\neq 0$, $\lambda=0=\mu$ whose proof is straightforward.

In summary, in dimension 1 there are 5 possible families of Poisson affgebras: if $\alpha=0$, i.e., $-\cdot -=0$, the following four families arise:
\begin{itemize}
    \item[(1A)] $\paff(P;\sigma, 0;\mu,\lambda,t)$ with $\sigma=(0,{\rm id})$,
    \item[(1B)] $\paff(P;\sigma, 0;\mu,\lambda,t)$ with $\sigma=({\rm id},0)$,
    \item[(1C)] $\paff(P;\sigma, s;\mu,\lambda,t)$ with $\sigma=(0,0)$,
    \item[(1D)] $\paff(P;\sigma, s;\mu,\lambda,t)$ with $\sigma=({\rm id},{\rm id})$,
\end{itemize}
whereas, if $\alpha\neq 0$, the following one is obtained:
\begin{itemize}
    \item[(1E)] $\paff(P;\sigma, s;0,0,t)$ with $\sigma=(\overleftarrow{\sigma},\overleftarrow{\sigma})$ and $\overleftarrow{\sigma}(e)=le$ such that $l^{2}-l-s\alpha=0$. 
\end{itemize}

Let us work under the assumptions of (1A). Let $\psi\colon P\rightarrow P$ be a Poisson algebra automorphism such that $\psi(e)=\psi e$ with $0\neq\psi\in \mathbb{F}$, and $q=qe$ such that $q\in\mathbb{F}$. Applying Corollary \ref{iso.Poisson.aff} with $t=te$, $t\in\mathbb{F}$, it is obtained that $s'=0$, $\overleftarrow{\sigma'}=\overleftarrow{\sigma}=0$ and $\overrightarrow{\sigma'}=\overrightarrow{\sigma}={\rm id}$ because all maps commute in dimension 1, $\mu'=\mu$ and $\lambda'=\lambda$ for the same reason, and 
\[t'=\psi(t+q-(\lambda+\mu)(q))=\psi[t+(1-\lambda-\mu)q]\,e.\]

Therefore, if $1-\lambda-\mu\neq 0,$ then we can set $q=\frac{t}{\lambda+\mu-1}$ which yields $t'=0$. In the case that $1-\lambda-\mu= 0$, $t'=\psi t\, e$. Then, if $t\neq 0$, we obtain that $t'=e$ by setting $\psi=\frac{1}{t}$; however, if $t=0$, then $t'=0$. In brief, the possible isomorphism classes of Poisson affgebras belonging to family (1A) are the following:
\[\textnormal{(1A)}\quad \paff(P;\sigma=(0,{\rm id}), 0;\mu,\lambda,t) \simeq \left\{\begin{array}{lcl}
    \paff(P;(0,{\rm id}), 0;\mu,\lambda,0) &\text{if} & 1-\lambda-\mu\neq 0,  \\
    \paff(P;(0,{\rm id}), 0;\mu,{\rm id}-\mu ,e) &\text{if} & \lambda=1-\mu\textnormal{ and }t\neq 0,\\
    \paff(P;(0,{\rm id}), 0;\mu,{\rm id}-\mu ,0) &\text{if} & \lambda=1-\mu\textnormal{ and }t= 0.
\end{array}\right.\]
An analogous argument yields the isomorphism classes for family (1B), which are the same as for (1A), but with $\sigma=({\rm id},0).$ 

Now, let us study the family (1C). Applying Corollary \ref{iso.Poisson.aff} it is obtained that $s'=\psi(s+q)$. So, $s'=0$ by setting $q=-s$. Moreover, since maps in dimension 1 commute, $\overleftarrow{\sigma'}=\overleftarrow{\sigma}=0$, $\overrightarrow{\sigma'}=\overrightarrow{\sigma}=0$, $\lambda'=\lambda$ and $\mu'=\mu$. Moreover, taking $t=te$ with $t\in\mathbb{F}$, an easy computation gives 
\[t'=\psi(t+q-(\lambda+\mu)(q))\overset{q=-s}{=}\psi(t-s+(\lambda+\mu)(s))=\psi[t+s(\lambda+\mu-1)]\,e. \]

In conclusion, if $t+s(\lambda+\mu-1)\neq 0$, then $t'=e$ by setting $\psi=[t+s(\lambda+\mu-1)]^{-1}$. In contrast, when $t+s(\lambda+\mu-1)=0$, $t'=0$ regardless the value of $\psi$. In short, we now list the possible isomorphism classes of Poisson affgebras in family (1C):
\[\textnormal{(1C)}\quad \paff(P;\sigma=(0,0), s;\mu,\lambda,t)\simeq \left\{\begin{array}{lcl}
    \paff(P;(0,0), 0;\mu,\lambda,0) &\text{if} & t+s(\lambda+\mu-1)=0,  \\
    \paff(P;(0,0), 0;\mu,\lambda,e) &\text{if} & t+s(\lambda+\mu-1)\neq 0.
\end{array}\right.\]
The same arguments as in (1C) apply to case (1D), except that $\sigma=({\rm id},{\rm id}).$

We are left with the analysis of the family (1E) only. In this case, since $\alpha\neq 0$ and the automorphism $\psi$ has to preserve the associative product, one easily obtains that $\psi={\rm id}.$ A direct application of Corollary \ref{iso.Poisson.aff} yields the following results: $\mu'=\mu=0$ and $\lambda'=\lambda=0$, and also
\begin{gather*}
    s'=s+q+q^{2}-2q\sigma =(s+q+q^{2}\alpha -2ql)\,e,\qquad
    a\sigma'=a\sigma-aq=a(l-\alpha q)\,e=\sigma'a,%\quad \sigma'a=\sigma a-qa\overset{\overrightarrow{\sigma}=\overleftarrow{\sigma}}{=}a\sigma-qa=a(l-\alpha q)\,e.
\end{gather*}
Hence, setting $q=\frac{l}{\alpha}$, $\overleftarrow{\sigma'}=0=\overrightarrow{\sigma'}$ and 
\[s'=\frac{\alpha s+l-l^{2}}{\alpha}\overset{l^{2}-l-\alpha s=0}{=}0.\]
Moreover, $t'=t+\frac{l}{\alpha}e.$ Then, we conclude that
\[\textnormal{(1E)}\quad \paff(P;\sigma=(\overleftarrow{\sigma},\overleftarrow{\sigma}), s;0,0,t)\simeq \paff(P;(0,0), 0;0,0,t+\frac{l}{\alpha}e).\]

\subsection{Classification of 2-dimensional Poisson affgebras}\label{ssec.2-dim}
In this section, we deal with the classification of complex Poisson affgebras in dimension 2. For this purpose, our starting point will be the well-known classification of complex 2-dimensional Poisson algebras \cite[Section~2.1]{GozRem}, since any complex 2-dimensional Poisson affgebra must admit one of these as its fibre. For the sake of completeness, let $(P,\cdot,[-,-])$ be a complex 2-dimensional Poisson algebra with basis $\{e_{1},e_{2}\}$. Two cases will be distiguished: either $[-,-]=0$, in which case the Lie part of $P$ is abelian, or $[-,-]\neq 0$. When $[-,-]=0$, the compatibility between the associative product and Poisson bracket holds automatically. Thus the classification of Poisson algebras reduces to considering four complex 2-dimensional associative algebras \cite{Pierce}, which will be detailed in Table \ref{tab: 1} (where only the nonzero basis products are listed): 

\begin{table}[h]
\begin{minipage}{0.5\linewidth}
\begin{center}
\begin{tabular}{|| c | c | c ||}\hline
 & Product & $\operatorname{Aut}(P)$\\\hline\hline
 (2A) & $e_{1}^{2}=e_{2}$ & $\begin{pmatrix}
     \theta & 0\\\delta&\theta^{2}
 \end{pmatrix},$ $\theta\neq 0$\\\hline
 (2B) & $\begin{array}{c} e_{1}^{2}=e_{1}\\e_{1}e_{2}=e_{2}\end{array}$ & $\begin{pmatrix}
     1 & 0\\\delta&\theta
 \end{pmatrix},$ $\theta\neq 0$\\\hline
\end{tabular}
\end{center}
\end{minipage}
\begin{minipage}{0.49\linewidth}
\begin{center}
\begin{tabular}{|| c | c | c ||}\hline
  & Product & $\operatorname{Aut}(P)$\\\hline\hline
 (2C) & $\begin{array}{c} e_{1}^{2}=e_{1}\\e_{2}e_{1}=e_{2}\end{array}$ & $\begin{pmatrix}
     1 & 0\\\delta&\theta
 \end{pmatrix},$ $\theta\neq 0$\\\hline
 (2D) & $\begin{array}{c} e_{1}^{2}=e_{1}\\e_{1}e_{2}=e_{2}=e_{2}e_{1}\end{array}$ & $\begin{pmatrix}
     1 & 0\\0&\theta
 \end{pmatrix},$ $\theta\neq 0$\\\hline
\end{tabular}
\end{center}
\end{minipage}
\vspace{.2cm}
\caption{Classification of $2$-dimensional Poisson algebras with $[-,-]=0$.}
\label{tab: 1}
\end{table}

On the other hand, if $[-,-]\neq 0,$ then it is isomorphic to the Poisson algebra described in Table~\ref{tab: 2}: 
\begin{table}[h]
\begin{center}
   \begin{tabular}{|| c | c | c ||}\hline
 & Product & $\operatorname{Aut}(P)$\\\hline\hline
 (2E) & $\begin{array}{c}
     -\cdot -=0   \\\, [e_{1},e_{2}]=e_{2} 
 \end{array}$ & $\begin{pmatrix}
     1 & 0\\\delta&\theta
 \end{pmatrix},$ $\theta\neq 0$\\\hline
\end{tabular}
\vspace{.2cm}
 \caption{Classification of $2$-dimensional Poisson algebras with $[-,-]\neq 0$.}
\label{tab: 2}
\end{center}
\end{table}

Only cases (2A) and (2E) will be discussed in detail, as they illustrate the techniques required to complete the remaining cases, which can be treated analogously. Following the same approach in Section~\ref{clas1dim}, we first determine all admissible homothetic data and the conditions on $\lambda$ and $\mu$ ensuring that they are derivations of the associative product and satisfy the generalized derivation condition \eqref{gen_der}. We then employ Corollary \ref{iso.Poisson.aff} to compute all corresponding isomorphism classes.
\subsubsection{\underline{Case \textbf{\emph{(2A):}} $\bm{[-,-]=0}$\emph{\textbf{;}} $\bm{e_{1}^{2}=e_{2}}$}}
Let us consider a double operator $\sigma=(\overleftarrow{\sigma},\overrightarrow{\sigma})$ on $P$ given by $\overleftarrow{\sigma}=\begin{pmatrix}
    \sigma_{1}&\sigma_{3}\\\sigma_{2}&\sigma_{4}
\end{pmatrix}$ and $\overrightarrow{\sigma}=\begin{pmatrix}
    \sigma_{5}&\sigma_{7}\\\sigma_{6}&\sigma_{8}
\end{pmatrix}$, together with an element $s=\begin{pmatrix}
    s_{1}\\s_{2}
\end{pmatrix}\in P$.
\begin{lemma}
With the notations above, $(\sigma,s)$ is a homothetic datum on $P$ if and only if one of the following cases hold: 

\begin{minipage}{0.5\linewidth}
\begin{itemize}
    \item $\overleftarrow{\sigma}=\overrightarrow{\sigma}=\begin{pmatrix}
    0&0\\\sigma_{2} & 0
\end{pmatrix},\quad s=\begin{pmatrix}
    -\sigma_{2}\\s_{2}
\end{pmatrix}$,
\end{itemize}
\end{minipage}
\begin{minipage}{0.49\linewidth}
\begin{itemize}
    \item $\overleftarrow{\sigma}=\overrightarrow{\sigma}=\begin{pmatrix}
    1&0\\\sigma_{2} & 1
\end{pmatrix},\quad s=\begin{pmatrix}
    \sigma_{2}\\s_{2}
\end{pmatrix}$.
\end{itemize}
\end{minipage}
\end{lemma}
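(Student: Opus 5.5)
The plan is to translate each axiom of a homothetic datum (Definitions~\ref{Def: dp} and~\ref{Def: hd}) into polynomial equations in $\sigma_1,\dots,\sigma_8,s_1,s_2$. Because the product of (2A) is so degenerate, this system can be solved by hand. I use the convention that the columns of the matrices are the images of the basis vectors:
\[
e_1\sigma=\sigma_1e_1+\sigma_2e_2,\quad e_2\sigma=\sigma_3e_1+\sigma_4e_2,\quad \sigma e_1=\sigma_5e_1+\sigma_6e_2,\quad \sigma e_2=\sigma_7e_1+\sigma_8e_2.
\]
The only nonzero basis product is $e_1e_1=e_2$, so $P\cdot P=\mathbb{C}e_2$ and $e_2$ annihilates everything. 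All conditions are bilinear in the algebra arguments, so it suffices to check them on basis vectors.

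First I impose the bimultiplication conditions \eqref{bimultiplication}. For $(ab)\sigma=a(b\sigma)$, the pair $a=b=e_1$ gives $e_2\sigma=e_1(e_1\sigma)=\sigma_1e_2$, hence $\sigma_3=0$ and $\sigma_4=\sigma_1$; the remaining basis pairs add nothing new. Symmetrically, $\sigma(ab)=(\sigma a)b$ gives $\sigma_7=0$ and $\sigma_8=\sigma_5$. The middle condition $a(\sigma b)=(a\sigma)b$ at $a=b=e_1$ reads $\sigma_5e_2=\sigma_1e_2$, so $\sigma_5=\sigma_1$. Next I check the homothetism condition $\sigma(a\sigma)=(\sigma a)\sigma$ at $a=e_1$. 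Both sides come out as $\sigma_1^2e_1+\sigma_1(\sigma_2+\sigma_6)e_2$, so it imposes nothing. At this stage $\overleftarrow{\sigma}$ and $\overrightarrow{\sigma}$ are lower triangular with diagonal $\sigma_1$, and their off-diagonal entries $\sigma_2,\sigma_6$ are still independent.

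The main step is \eqref{homothetic datum}, $\sigma^2=\sigma+\bar s$, applied on both sides. On the right, $(e_1\sigma)\sigma=\sigma_1^2e_1+2\sigma_1\sigma_2e_2$, while $e_1\sigma+e_1s=\sigma_1e_1+(\sigma_2+s_1)e_2$. Comparing coefficients gives $\sigma_1^2=\sigma_1$ and $2\sigma_1\sigma_2=\sigma_2+s_1$. On $e_2$ both sides reduce consistently to $\sigma_1^2=\sigma_1$. On the left one gets analogously $2\sigma_1\sigma_6=\sigma_6+s_1$. Subtracting the two relations yields $(2\sigma_1-1)(\sigma_2-\sigma_6)=0$. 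Since $\sigma_1\in\{0,1\}$, the factor $2\sigma_1-1=\pm1$ is never zero, which forces $\sigma_2=\sigma_6$ and $\overleftarrow{\sigma}=\overrightarrow{\sigma}$. Then $s_1=(2\sigma_1-1)\sigma_2$ gives $s_1=-\sigma_2$ when $\sigma_1=0$ and $s_1=\sigma_2$ when $\sigma_1=1$. The entry $s_2$ remains free because $e_2$ is annihilating.

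It remains to verify \eqref{commuts}, $\sigma s=s\sigma$. Since $\overleftarrow{\sigma}=\overrightarrow{\sigma}$ now, this holds automatically. For the converse direction, I will substitute the two families in the statement back into every axiom; by the computations above each one is satisfied. I expect the only delicate point to be the step showing that left and right operators coincide. Nothing in the bimultiplication or homothetism axioms links $\sigma_2$ to $\sigma_6$, so one has to use the left and right versions of \eqref{homothetic datum} together with the dichotomy $\sigma_1\in\{0,1\}$.
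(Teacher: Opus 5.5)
Your proof is correct and reaches the same two families. You share the paper's coordinate set-up and its first steps: bimultiplication gives $\sigma_3=\sigma_7=0$ and $\sigma_1=\sigma_4=\sigma_5=\sigma_8$, and the homothetism axiom then imposes nothing.

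The difference is in how you show $\overleftarrow{\sigma}=\overrightarrow{\sigma}$.

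\begin{itemize}
\item The paper starts from \eqref{commuts}, which reads $s_1(\sigma_2-\sigma_6)=0$, and splits into cases. If $s_1\neq 0$, then $\sigma_2=\sigma_6$, and \eqref{homothetic datum} gives $s_1=\mp\sigma_2$ with $\sigma_2\neq0$. If $s_1=0$, the right and left versions of \eqref{homothetic datum} separately force $\sigma_2=0$ and $\sigma_6=0$. This produces four subfamilies, which the paper then merges into the two of the statement.
\item You instead subtract the $e_2$-coefficients of the right and left versions of \eqref{homothetic datum} evaluated at $e_1$. This gives $(2\sigma_1-1)(\sigma_2-\sigma_6)=0$, and $\sigma_1\in\{0,1\}$ then forces $\sigma_2=\sigma_6$ with no case split on $s_1$.
\end{itemize}

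Your route is shorter and needs no merging step at the end. It also shows that \eqref{commuts} is redundant for this algebra, since it follows from the other axioms. The paper's ordering simply follows the axioms in the order of Definition~\ref{Def: hd}, and it ends up using both sides of \eqref{homothetic datum} in its $s_1=0$ case anyway.
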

\begin{proof}
The proof is straightforward in view of the linearity of $\overleftarrow{\sigma}$ and $\overrightarrow{\sigma}$, as well as the definition of the associative product. We therefore only specify how each of the conditions is obtained. Firstly, bimultiplication equalities \eqref{bimultiplication} lead to 
\[\sigma_{3}=\sigma_{7}=0,\qquad
    \sigma_{1}=\sigma_{4}=\sigma_{5}=\sigma_{8},\]
which automatically imply that $\sigma$ is a double homothetism with no additional conditions.

Moreover, \eqref{commuts} holds if and only if \begin{gather}\label{x1}s_{1}(\sigma_{2}-\sigma_{6})=0.\end{gather} Consequently, two possibilities arise:

\fbox{$s_{1}\neq 0$} In this case, necessarily $\sigma_{2}=\sigma_{6}$ for \eqref{x1} to hold, i.e., $\overleftarrow{\sigma}=\overrightarrow{\sigma}=\begin{pmatrix}
    \sigma_{1}&0\\\sigma_{2} & \sigma_{1}
\end{pmatrix}$. Therefore, under these conditions, \eqref{homothetic datum} is verified if and only if 
\begin{gather}
    \label{x2}\sigma_{1}(\sigma_{1}-1)=0,\\
    \label{x3}2\sigma_{1}\sigma_{2}=\sigma_{2}+s_{1}.
\end{gather}
Note that \eqref{x2} gives rise to two possible subcases: $\sigma_{1}=0$ or $\sigma_{1}=1$. On the one hand, if $\sigma_{1}=0$, then \eqref{x3} implies that $s_{1}=-\sigma_{2}.$ On the other hand, when $\sigma_{1}=1$, we obtain that $s_{1}=\sigma_{2}$.

Thus, we are left with two possible families:
\begin{gather}\label{f1A}
    \overleftarrow{\sigma}=\overrightarrow{\sigma}=\begin{pmatrix}
    0&0\\\sigma_{2} & 0
\end{pmatrix},\quad s=\begin{pmatrix}
    -\sigma_{2}\\s_{2}
\end{pmatrix} \textnormal{ such that }\sigma_{2}\neq0,
\end{gather}
and 
\begin{gather}\label{f1B}
    \overleftarrow{\sigma}=\overrightarrow{\sigma}=\begin{pmatrix}
    1&0\\\sigma_{2} & 1
\end{pmatrix},\quad s=\begin{pmatrix}
    \sigma_{2}\\s_{2}
\end{pmatrix} \textnormal{ such that }\sigma_{2}\neq0.
\end{gather}

\fbox{$s_{1}= 0$} Under this assumption, $\overleftarrow{\sigma}=\begin{pmatrix}
    \sigma_{1}&0\\\sigma_{2} & \sigma_{1}
\end{pmatrix}$ and $\overrightarrow{\sigma}=\begin{pmatrix}
    \sigma_{1}&0\\\sigma_{6}&\sigma_{1}
\end{pmatrix}$ with $s=\begin{pmatrix}
    0\\s_{2}
\end{pmatrix}$. Arguing as above, by considering \eqref{homothetic datum} with $\overleftarrow{\sigma}$, it is satisfied if and only if \eqref{x2} and 
\begin{gather}
    \label{x4} 2\sigma_{1}\sigma_{2}=\sigma_{2}
\end{gather}
hold. Hence, regardless of whether $\sigma_{1}$ is 0 or 1, \eqref{x4} implies that $\sigma_{2}=0$. Now, again independently of $\sigma_{1}\in\{0,1\}$, by computing \eqref{homothetic datum} with $\overrightarrow{\sigma}$, this equality leads to $\sigma_{6}=0$. In summary, two families arise:
\[\overleftarrow{\sigma}=\overrightarrow{\sigma}=0,\: s=\begin{pmatrix}
    0\\s_{2}
\end{pmatrix},\quad\textnormal{ or }\quad \overleftarrow{\sigma}=\overrightarrow{\sigma}={\rm id},\: s=\begin{pmatrix}
    0\\s_{2}
\end{pmatrix}.\]

Since the first subfamily above is obtained by setting $\sigma_{2}=0$ in \eqref{f1A}, while the second one is obtained by setting $\sigma_{2}=0$ in \eqref{f1B}, the four subfamilies arising in the proof reduce to the two families appearing in the statement. This remark completes the proof.
\end{proof}

In other words, the previous lemma asserts that there are only two possible families of admissible homothetic data on $P$. 

Let us now consider linear endomorphism of $P$, $\lambda=\begin{pmatrix}
    \lambda_{1}&\lambda_{3}\\\lambda_{2}&\lambda_{4}
\end{pmatrix}$ and $\mu=\begin{pmatrix}
    \mu_{1}&\mu_{3}\\\mu_{2}&\mu_{4}
\end{pmatrix}$. Since $[-,-]=0$, \eqref{gen_der} always holds independently of the choice of $\lambda$ and $\mu$. Then, according to Theorem~\ref{main.th.poisson}, $\paff(P;\sigma, s;\mu,\lambda,t)$ defines a Poisson affgebra if and only if $\lambda$ and $\mu$ are derivations on $(P,\cdot)$. Direct computation shows that this happens if and only if $\lambda_{3}=0$ and $\lambda_{4}=2\lambda_{1}$, and the same for $\mu$: $\mu_{3}=0$, $\mu_{4}=2\mu_{1}$.
In conclusion, there are two possible families of Poisson affgebras with fibre the Poisson algebra (2A):
\begin{itemize}
    \item[(FA.1)] $\paff(P;\sigma, s;\mu,\lambda,t)$ such that $\overleftarrow{\sigma}=\overrightarrow{\sigma}=\begin{pmatrix}
    0&0\\\sigma_{2} & 0
\end{pmatrix}$, $s=\begin{pmatrix}
    -\sigma_{2}\\s_{2}
\end{pmatrix}$,
\item[(FA.2)] $\paff(P;\sigma, s;\mu,\lambda,t)$ such that $\overleftarrow{\sigma}=\overrightarrow{\sigma}=\begin{pmatrix}
    1&0\\\sigma_{2} & 1
\end{pmatrix}$, $s=\begin{pmatrix}
    \sigma_{2}\\s_{2}
\end{pmatrix}$,
\end{itemize}
where $\lambda=\begin{pmatrix}
    \lambda_{1}&0\\\lambda_{2} &2\lambda_{1}
\end{pmatrix}$ and $\mu=\begin{pmatrix}
    \mu_{1}&0\\\mu_{2} &2\mu_{1}
\end{pmatrix}$, with $t=\begin{pmatrix}
    t_{1}\\t_{2}
\end{pmatrix}$ an element in $P$. We restrict our analysis of isomorphism classes to family (FA.1), as family (FA.2) can be treated analogously.

Applying Corollary \ref{iso.Poisson.aff} to (FA.1), by \eqref{iso_a} one easily obtains that $s'=\psi\begin{pmatrix}
    -\sigma_{2}+q_{1}\\s_{2}+q_{2}+q_{1}^{2}-2\sigma_{2}q_{1}
\end{pmatrix}.$ Therefore, setting $q_{1}=\sigma_{2}$ and $q_{2}=\sigma_{2}^{2}-s_{2}$ leads to $s'=0$. Let us consider $\psi=\begin{pmatrix}
    \theta&0\\\delta&\theta^{2}
\end{pmatrix}$, $\theta\neq 0$, an automorphism of $P$ whose inverse is $\psi^{-1}=\frac{1}{\theta^{3}}\begin{pmatrix}
    \theta^{2}&0\\-\delta &\theta
\end{pmatrix}$. By \eqref{iso_b} and \eqref{iso_c}, one obtains that 
\[\overleftarrow{\sigma'}=\begin{pmatrix}
    0&0\\\theta(\sigma_{2}-q_{1})&0
\end{pmatrix}=\overrightarrow{\sigma'}\:\overset{q_{1}=\sigma_{2}}{\implies}\:\overleftarrow{\sigma'}=0=\overrightarrow{\sigma'},\]
and it is important to note that this step is independent of the automorphism $\psi$. 

Moreover, taking into account that ${\rm ad}_{q}=0$ since $[-,-]=0$, by \eqref{iso_d}, \eqref{iso_e} and \eqref{iso_f} it results that
\begin{gather}\nonumber\mu'=\begin{pmatrix}
    \mu_{1}&0\\\frac{\theta^{2}\mu_{2}-\delta\mu_{1}}{\theta}&2\mu_{1}
\end{pmatrix},\qquad \lambda'=\begin{pmatrix}
    \lambda_{1}&0\\\frac{\theta^{2}\lambda_{2}-\delta\lambda_{1}}{\theta}&2\lambda_{1}
\end{pmatrix},\\ \label{tprime_general}t'=\begin{pmatrix}
    \theta[t_{1}+\sigma_{2}(1-\mu_{1}-\lambda_{1})]\\
    \delta[t_{1}+\sigma_{2}(1-\mu_{1}-\lambda_{1})]+\theta^{2}[t_{2}+(\sigma_{2}^{2}-s_{2})(1-2\mu_{1}-2\lambda_{1})-\sigma_{2}(\mu_{2}+\lambda_{2})]
\end{pmatrix}.\end{gather}

If $\mu_{1}\neq 0$, then $\mu'=\textnormal{diag}(\mu_{1},2\mu_{1})$ by setting $\delta=\frac{\theta^{2}\mu_{2}}{\mu_{1}}.$ With this choice for $\delta$, \[\lambda'=\begin{pmatrix}
    \lambda_{1}&0\\\frac{\theta}{\mu_{1}}(\lambda_{2}\mu_{1}-\mu_{2}\lambda_{1})&2\lambda_{1}
\end{pmatrix}.\] Thus, if $\lambda_{2}\mu_{1}-\mu_{2}\lambda_{1}\neq 0$, then $\lambda'=\begin{pmatrix}
    \lambda_{1}&0\\1&2\lambda_{1}
\end{pmatrix}$ taking $\theta=\frac{\mu_{1}}{\lambda_{2}\mu_{1}-\mu_{2}\lambda_{1}}$. Hence, upon substituting these values of $\theta$ and $\delta$ in \eqref{tprime_general}, we obtain that
{\small
\begin{gather}\label{tprime1}
    t'=\begin{pmatrix}
        \frac{\mu_{1}}{\lambda_{2}\mu_{1}-\mu_{2}\lambda_{1}}\left[t_{1}+\sigma_{2}(1-\mu_{1}-\lambda_{1})\right]\\
        \frac{\mu_{1}}{(\lambda_{2}\mu_{1}-\mu_{2}\lambda_{1})^{2}}\left(\mu_{2}[t_{1}+\sigma_{2}(1-\mu_{1}-\lambda_{1})]+\mu_{1}[t_{2}+(\sigma_{2}^{2}-s_{2})(1-2\mu_{1}-2\lambda_{1})-\sigma_{2}(\mu_{2}+\lambda_{2})]\right)
    \end{pmatrix}.
\end{gather}}

Whereas, if $\lambda_{2}\mu_{1}-\mu_{2}\lambda_{1}= 0$, then $\lambda'=\textnormal{diag}(\lambda_{1},2\lambda_{1})$ and $t'$ becomes
\begin{gather*}
    t'=\begin{pmatrix}
        \theta[t_{1}+\sigma_{2}(1-\mu_{1}-\lambda_{1})]\\
        \frac{\theta^{2}}{\mu_{1}}[\mu_{2}t_{1}+\mu_{1}t_{2}+(1-2\mu_{1}-2\lambda_{1})(\mu_{1}(\sigma^{2}_{2}-s_{2})+\sigma_{2}\mu_{2})]
    \end{pmatrix}.
\end{gather*}
So, in the case that $t_{1}+\sigma_{2}(1-\mu_{1}-\lambda_{1})\neq 0$, it is possible to take $\theta=[t_{1}+\sigma_{2}(1-\mu_{1}-\lambda_{1})]^{-1}$, and then 
\begin{gather}\label{tprime2}
    t'=\begin{pmatrix}
        1\\ \frac{\mu_{2}t_{1}+\mu_{1}t_{2}+(1-2\mu_{1}-2\lambda_{1})(\mu_{1}(\sigma^{2}_{2}-s_{2})+\sigma_{2}\mu_{2})}{\mu_{1}[t_{1}+\sigma_{2}(1-\mu_{1}-\lambda_{1})]^{2}}
    \end{pmatrix}.
\end{gather}
However, if $t_{1}=\sigma_{2}(\mu_{1}+\lambda_{1}-1)$, then $t'=\theta^{2}De_{2}$, where $D\coloneqq t_{2}+(1-2\mu_{1}-2\lambda_{1})(\sigma_{2}^{2}-s_{2})-\frac{\sigma_{2}\mu_{2}}{\mu_{1}}(\lambda_{1}+\mu_{1}).$ Then, two possible cases arise: $t'=0$ if $D=0$, or $t'=\Omega_{D}e_{2}$, where $\Omega_{D}$ is any element in the square class of $D$, when $D\neq 0$.

If $\mu_{1}=0$, then $\mu'=\begin{pmatrix}
    0&0\\\theta\mu_{2}&0
\end{pmatrix}$. In case $\mu_{2}\neq 0$, then $\mu'=\begin{pmatrix}
    0&0\\1&0
\end{pmatrix}$ by setting $\theta=\mu_{2}^{-1}$. Such a choice of $\theta$ yields $\lambda'=\begin{pmatrix}
    \lambda_{1}&0\\\mu_{2}^{-1}(\lambda_{2}-\delta\mu_{2}^{2}\lambda_{1})&2\lambda_{1}
\end{pmatrix}$. Now assuming that $\lambda_{1}\neq 0$, it is possible to set $\delta=\frac{\lambda_{2}}{\mu_{2}^{2}\lambda_{1}}$ which implies that $\lambda'=\textnormal{diag}(\lambda_{1},2\lambda_{1})$, and then
\begin{gather}\label{tprime3}
    t'=\begin{pmatrix}
        \mu_{2}^{-1}[t_{1}+\sigma_{2}(1-\lambda_{1})]\\
        \frac{1}{\mu_{2}^{2}\lambda_{1}}\left(\lambda_{2}[t_{1}+\sigma_{2}(1-\lambda_{1})]+\lambda_{1}[t_{2}+(\sigma_{2}^{2}-s_{2})(1-2\lambda_{1})-\sigma_{2}(\mu_{2}+\lambda_{2})]\right)
    \end{pmatrix}.
\end{gather}

Conversely, if $\lambda_{1}=0$, then $\lambda'=\begin{pmatrix}
    0&0\\\frac{\lambda_{2}}{\mu_{2}}&0
\end{pmatrix}$ and $t'=\begin{pmatrix}
    \mu_{2}^{-1}(t_{1}+\sigma_{2})\\\delta(t_{1}+\sigma_{2})+\mu_{2}^{-2}[t_{2}+\sigma_{2}^{2}-s_{2}-\sigma_{2}(\lambda_{2}+\mu_{2})]
\end{pmatrix}$. Again, this situation splits into two subcases: on the one hand, if $t_{1}+\sigma_{2}\neq 0$, then one can sets $\delta=\frac{\mu_{2}^{-2}[\sigma_{2}(\lambda_{2}+\mu_{2})+s_{2}-\sigma_{2}^{2}-t_{2}]}{t_{1}+\sigma_{2}}$ obtaining that $t'=\mu_{2}^{-1}(t_{1}+\sigma_{2})e_{1}$; on the other hand, if $t_{1}=-\sigma_{2}$, then 
\begin{equation}\label{tprime5}t'=\mu_{2}^{-2}[t_{1}^{2}+t_{2}-s_{2}+t_{1}(\lambda_{2}+\mu_{2})]e_{2}.\end{equation}

It only remains to study the case $\mu_{1}=0=\mu_{2}$, in which $\mu'=0$. Then, proceeding analogously:
\begin{itemize}
    \item If $\lambda_{1}\neq 0$, then we can set $\delta=\frac{\theta^{2}\lambda_{2}}{\lambda_{1}}$ in such a way that $\lambda'=\textnormal{diag}(\lambda_{1},2\lambda_{1})$ and 
    \begin{gather*}
        t'=\begin{pmatrix}
            \theta[t_{1}+\sigma_{2}(1-\lambda_{1})]\\
            \theta^{2}[t_{2}+(\sigma_{2}^{2}-s_{2})(1-2\lambda_{1})+\frac{\lambda_{2}}{\lambda_{1}}(t_{1}+\sigma_{2}(1-2\lambda_{1}))]
        \end{pmatrix}.
    \end{gather*}
    \begin{itemize}
        \item If $t_{1}+\sigma_{2}(1-\lambda_{1})\neq 0$, then we obtain that 
        \begin{gather}\label{tprime4}
            t'=\begin{pmatrix}
                1\\\frac{t_{2}+(\sigma_{2}^{2}-s_{2})(1-2\lambda_{1})+\lambda_{2}\lambda_{1}^{-1}(t_{1}+\sigma_{2}(1-2\lambda_{1}))}{[t_{1}+\sigma_{2}(1-\lambda_{1})]^{2}}
            \end{pmatrix}.
        \end{gather}
        by setting $\theta=[t_{1}+\sigma_{2}(1-\lambda_{1})]^{-1}$.
        \item If $t_{1}=\sigma_{2}(\lambda_{1}-1)$, then $t'=\theta^{2}E\,e_{2}$, where $E\coloneqq t_{2}+(\sigma_{2}^{2}-s_{2})(1-2\lambda_{1})-\sigma_{2}\lambda_{2}$. Hence, in case $E$ vanishes, then $t'=0$; however, if $E\neq 0$, then $t'=\Omega_{E}e_{2}$ with $\Omega_{E}$ any element in the square class of $E$.
    \end{itemize}

    \item If $\lambda_{1}=0$ and $\lambda_{2}\neq 0$, then taking $\theta=\lambda_{2}^{-1}$ one obtains that $\lambda'=\begin{pmatrix}
        0&0\\1&0
    \end{pmatrix}$ and also $t'$ becomes
    \begin{gather*}
        t'=\begin{pmatrix}
            \lambda_{2}^{-1}(t_{1}+\sigma_{2})\\
            \delta(t_{1}+\sigma_{2})+\frac{t_{2}+\sigma_{2}^{2}-s_{2}-\sigma_{2}\lambda_{2}}{\lambda_{2}^{2}}
        \end{pmatrix}.
    \end{gather*}
    Therefore, if $t_{1}+\sigma_{2}\neq 0$, then one can set $\delta=\frac{s_{2}+\sigma_{2}\lambda_{2}-\sigma_{2}^{2}-t_{2}}{\lambda_{2}^{2}(t_{1}+\sigma_{2})}$ in order to obtain that $t'=\lambda_{2}^{-1}(t_{1}+\sigma_{2})e_{1}$. However, if $t_{1}=-\sigma_{2}$, then $t'=\frac{t_{2}+t_{1}^{2}-s_{2}+t_{1}\lambda_{2}}{\lambda_{2}^{2}}e_{2}$.

    \item If $\lambda_{1}=0=\lambda_{2},$ then $\lambda'=0$ and $t'=\begin{pmatrix}
        \theta(t_{1}+\sigma_{2})\\\delta(t_{1}+\sigma_{2})+\theta^{2}(t_{2}+\sigma_{2}^{2}-s_{2})
    \end{pmatrix}$. So, if $t_{1}+\sigma_{2}\neq 0$, it is obtained that $t'=e_{1}$ by setting $\delta=\frac{\theta^{2}(s_{2}-t_{2}-\sigma_{2}^{2})}{t_{1}+\sigma_{2}}$ and $\theta=(t_{1}+\sigma_{2})^{-1}$.

    In case that $t_{1}=-\sigma_{2}$, $t'=\theta^{2} F e_{2} $, with $F\coloneqq t_{2}+\sigma_{2}^{2}-s_{2}$. Therefore, $t'$ will vanish when $F=0$, but, if $F\neq 0$, then $t'=\Omega_{F}e_{2}$, where $\Omega_{F}$ is any element in the square class of $F$. 
\end{itemize}

In conclusion, 
{\small
\[\rotatebox{90}{$\begin{array}{c}\hspace{-1.3cm}\textnormal{(FA.1)}\\\hspace{-1.3cm}\paff(P;\sigma, s;\mu,\lambda,t)\end{array}$}\simeq \left\{ \begin{array}{l c c}

\paff\left(P;\sigma', s';\mu'=\operatorname{diag}(\mu_{1},2\mu_{1}),\lambda'=\begin{pmatrix}
    \lambda_{1}&0\\1&2\lambda_{1}
\end{pmatrix},t'=\eqref{tprime1}\right) & \textnormal{if} & \begin{array}{c}\mu_{1}\neq 0,\\ \lambda_{2}\mu_{1}-\mu_{2}\lambda_{1}\neq 0,\end{array}\vspace{.1cm}\\

\paff\left(P;\sigma', s';\mu'=\operatorname{diag}(\mu_{1},2\mu_{1}),\lambda'=\operatorname{diag}(\lambda_{1},2\lambda_{1}),t'=\eqref{tprime2}\right) & \textnormal{if} & \begin{array}{c}\mu_{1}\neq 0,\\ \lambda_{2}\mu_{1}-\mu_{2}\lambda_{1}= 0,\\t_{1}+\sigma_{2}(1-\mu_{1}-\lambda_{1})\neq 0,\end{array}\vspace{.1cm}\\

\paff\left(P;\sigma', s';\mu'=\operatorname{diag}(\mu_{1},2\mu_{1}),\lambda'=\operatorname{diag}(\lambda_{1},2\lambda_{1}),t'=\Omega_{D}e_{2}\right) & \textnormal{if} & \begin{array}{c}\mu_{1}\neq 0,\\ \lambda_{2}\mu_{1}-\mu_{2}\lambda_{1}= 0,\\t_{1}=\sigma_{2}(\mu_{1}+\lambda_{1}-1),\\D\neq 0\end{array}\vspace{.1cm}\\

\paff\left(P;\sigma', s';\mu'=\operatorname{diag}(\mu_{1},2\mu_{1}),\lambda'=\operatorname{diag}(\lambda_{1},2\lambda_{1}),t'=0\right) & \textnormal{if} & \begin{array}{c}\mu_{1}\neq 0,\\ \lambda_{2}\mu_{1}-\mu_{2}\lambda_{1}= 0,\\t_{1}=\sigma_{2}(\mu_{1}+\lambda_{1}-1),\\D= 0\end{array}\vspace{.1cm}\\

\paff\left(P;\sigma', s';\mu'=\begin{pmatrix}
    0&0\\1&0
\end{pmatrix},\lambda'=\operatorname{diag}(\lambda_{1},2\lambda_{1}),t'=\eqref{tprime3}\right) & \textnormal{if} & \begin{array}{c} \mu_{1}= 0,\:\mu_{2}\neq 0\\ \lambda_{1}\neq  0,\end{array}\vspace{.1cm}\\

\paff\left(P;\sigma', s';\mu'=\begin{pmatrix}
    0&0\\1&0
\end{pmatrix},\lambda'=\begin{pmatrix}
    0&0\\\frac{\lambda_{2}}{\mu_{2}}&0
\end{pmatrix},t'=\mu_{2}^{-1}(t_{1}+\sigma_{2})e_{1}\right) & \textnormal{if} & \begin{array}{c} \mu_{1}= 0,\:\mu_{2}\neq 0\\ \lambda_{1}=  0,\: t_{1}+\sigma_{2}\neq 0,\end{array}\vspace{.1cm}\\

\paff\left(P;\sigma', s';\mu'=\begin{pmatrix}
    0&0\\1&0
\end{pmatrix},\lambda'=\begin{pmatrix}
    0&0\\\frac{\lambda_{2}}{\mu_{2}}&0
\end{pmatrix},t'=\eqref{tprime5}\right) & \textnormal{if} & \begin{array}{c} \mu_{1}= 0,\:\mu_{2}\neq 0\\ \lambda_{1}=  0,\: t_{1}=-\sigma_{2},\end{array}\vspace{.1cm}\\

\paff\left(P;\sigma', s';\mu'=0,\lambda'=\textnormal{diag}(\lambda_{1},2\lambda_{1}),t'=\eqref{tprime4}\right) & \textnormal{if} & \begin{array}{c} \mu_{1}= 0=\mu_{2}\\ \lambda_{1}\neq  0,\\t_{1}+\sigma_{2}(1-\lambda_{1})\neq 0,\end{array}\vspace{.1cm}\\

\end{array}\right.\]}

{\small
\[\rotatebox{90}{$\begin{array}{c}\hspace{-1.3cm}\textnormal{(FA.1)}\\\hspace{-1.3cm}\paff(P;\sigma, s;\mu,\lambda,t)\end{array}$}\simeq \left\{ \begin{array}{l c c}

\paff\left(P;\sigma', s';\mu'=0,\lambda'=\textnormal{diag}(\lambda_{1},2\lambda_{1}),t'=\Omega_{E}e_{2}\right) & \textnormal{if} & \begin{array}{c} \mu_{1}= 0=\mu_{2}\\ \lambda_{1}\neq  0,\\t_{1}=\sigma_{2}(\lambda_{1}-1),\: E\neq 0,\end{array}\vspace{.1cm}\\

\paff\left(P;\sigma', s';\mu'=0,\lambda'=\textnormal{diag}(\lambda_{1},2\lambda_{1}),t'=0\right) & \textnormal{if} & \begin{array}{c} \mu_{1}= 0=\mu_{2}\\ \lambda_{1}\neq  0,\\t_{1}=\sigma_{2}(\lambda_{1}-1),\: E=0,\end{array}\vspace{.1cm}\\

\paff\left(P;\sigma', s';\mu'=0,\lambda'=\begin{pmatrix}
    0&0\\1&0
\end{pmatrix},t'=\lambda_{2}^{-1}(t_{1}+\sigma_{2})e_{1}\right) & \textnormal{if} & \begin{array}{c} \mu_{1}= 0=\mu_{2}\\ \lambda_{1}= 0,\:\lambda_{2}\neq 0,\\t_{1}+\sigma_{2}\neq 0,\end{array}\vspace{.1cm}\\

\paff\left(P;\sigma', s';\mu'=0,\lambda'=\begin{pmatrix}
    0&0\\1&0
\end{pmatrix},t'=\frac{t_{2}+t_{1}^{2}-s_{2}+t_{1}\lambda_{2}}{\lambda_{2}^{2}}e_{2}\right) & \textnormal{if} & \begin{array}{c} \mu_{1}= 0=\mu_{2}\\ \lambda_{1}= 0,\:\lambda_{2}\neq 0,\\t_{1}=-\sigma_{2},\end{array}\vspace{.1cm}\\

\paff\left(P;\sigma', s';\mu'=0,\lambda'=0,t'=e_{1}\right) & \textnormal{if} & \begin{array}{c} \mu_{1}= 0=\mu_{2}\\ \lambda_{1}=  0=\lambda_{2},\\t_{1}+\sigma_{2}\neq0,\end{array}\vspace{.1cm}\\

\paff\left(P;\sigma', s';\mu'=0,\lambda'=0,t'=\Omega_{F}e_{2}\right) & \textnormal{if} & \begin{array}{c} \mu_{1}= 0=\mu_{2}\\ \lambda_{1}=  0=\lambda_{2},\\t_{1}=-\sigma_{2},\:F\neq 0,\end{array}\vspace{.1cm}\\

\paff\left(P;\sigma', s';\mu'=0,\lambda'=0,t'=0\right) & \textnormal{if} & \begin{array}{c} \mu_{1}= 0=\mu_{2}\\ \lambda_{1}=  0=\lambda_{2},\\t_{1}=-\sigma_{2},\:F= 0,\end{array}
\end{array}\right.\]}
where, in all the isomorphism classes, $\sigma'=(0,0)$ and $s'=0$.

\subsubsection{\underline{Case \textbf{\emph{(2E):}} $\bm{-\cdot-=0}$\emph{\textbf{;}} $\bm{[e_{1},e_{2}]=e_{2}}$}}
When $-\cdot -=0$, determining all admissible homothetic data (recall Definitions \ref{Def: dp} and \ref{Def: hd}) is equivalent to determining pairs of commuting idempotent endomorphisms that agree at a vector. 
\begin{lemma}\label{idemp 2x2}
    Let $A\in\mathcal{M}_{2}(\mathbb{F})$ be a $2\times 2$ matrix with coefficients in $\mathbb{F}$. The matrix $A$ is idempotent, i.e., $A^{2}=A$, if and only if $A=\begin{pmatrix}
        a & c\\b&1-a
    \end{pmatrix}$ such that $a(1-a)=bc$ for any $a,b,c\in\mathbb{F}$.
\end{lemma}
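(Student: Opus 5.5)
The plan is to compare entries directly. Write a general matrix as $A=\begin{pmatrix} a & c\\ b & d\end{pmatrix}$ and compute
\[
A^{2}=\begin{pmatrix} a^{2}+bc & c(a+d)\\ b(a+d) & d^{2}+bc\end{pmatrix}.
\]
Then $A^{2}=A$ becomes the four scalar equations
\[
a^{2}+bc=a,\qquad c(a+d)=c,\qquad b(a+d)=b,\qquad d^{2}+bc=d .
\]
The converse direction is a direct check: if $d=1-a$ and $a(1-a)=bc$, then $a+d=1$, so the off-diagonal equations hold. The first equation is exactly $a(1-a)=bc$. The fourth reads $(1-a)^{2}+bc=1-a$, which is equivalent to $bc=a(1-a)$ again.

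For the direct implication I will split into cases on the off-diagonal entries.

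\emph{Case $b\neq 0$ or $c\neq 0$.} One of the off-diagonal equations forces $a+d=1$, so $d=1-a$. The first equation then gives $a(1-a)=bc$, which is the required form.

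\emph{Case $b=c=0$.} The diagonal equations reduce to $a^{2}=a$ and $d^{2}=d$, so $a,d\in\{0,1\}$. The subcases $(a,d)=(1,0)$ and $(0,1)$ have trace $1$ and fit the stated form with $a(1-a)=0=bc$.

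The main obstacle is the remaining subcase $(a,d)\in\{(0,0),(1,1)\}$, that is, $A=0$ or $A=\mathrm{id}$. These are idempotent but have trace $0$ or $2$, so they are \emph{not} of the displayed shape. The statement as written therefore needs to be read as describing the non-trivial idempotents ($A\neq 0,\mathrm{id}$), with $0$ and $\mathrm{id}$ listed separately. I would make that correction explicit in the proof, since the later application to homothetic data on (2E) must include the choices $\overleftarrow{\sigma},\overrightarrow{\sigma}\in\{0,\mathrm{id}\}$.

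An alternative, more conceptual route is Cayley--Hamilton: $A^{2}-(\operatorname{tr}A)A+(\det A)\,\mathrm{id}=0$. Combined with $A^{2}=A$, this gives $(1-\operatorname{tr}A)A=-(\det A)\,\mathrm{id}$. If $A$ is not scalar, then $\operatorname{tr}A=1$ and $\det A=0$, and these are precisely $d=1-a$ and $a(1-a)=bc$. If $A$ is scalar, then $A\in\{0,\mathrm{id}\}$.
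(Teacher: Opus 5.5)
Your proposal is correct, and your objection to the statement is justified. The matrices $0$ and $I_{2}$ are idempotent with trace $0$ and $2$, so the ``only if'' direction is false as written. The correct statement is that the \emph{non-scalar} idempotents are exactly the matrices of the displayed form.

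The paper's proof follows your second route. It combines Cayley--Hamilton with $A^{2}=A$ to get $(1-\operatorname{tr}A)A+\det(A)I_{2}=0$, and then only observes that $\operatorname{tr}A=1$, $\det A=0$ is \emph{sufficient} for this identity. That gives the ``if'' direction only. Necessity needs exactly the step you supply: the two coefficients must vanish when $A$ and $I_{2}$ are linearly independent, i.e.\ when $A$ is not scalar, and the scalar idempotents are $0$ and $I_{2}$. Your main entrywise argument is a more elementary alternative. The case split on $b,c$ makes the exceptional matrices appear by themselves, at the cost of a little more bookkeeping than the basis-free Cayley--Hamilton route.

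You are also right that this affects the later use. The proof of the next lemma (commuting idempotents satisfy $A=B$ or $B=I_{2}-A$) relies on $\operatorname{tr}A=1$, and that lemma fails too, e.g.\ for $A=0$, $B=\operatorname{diag}(1,0)$. Hence the list of homothetic data for (2E) is incomplete. It misses, for instance:
\begin{itemize}
\item $\overleftarrow{\sigma}=\overrightarrow{\sigma}\in\{0,I_{2}\}$ with arbitrary $s$;
\item the pairs $(0,I_{2})$ and $(I_{2},0)$ with $s=0$;
\item mixed pairs such as $\overleftarrow{\sigma}=0$, $\overrightarrow{\sigma}=\operatorname{diag}(1,0)$ with $s$ a multiple of $e_{2}$.
\end{itemize}
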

\begin{proof}
    For an arbitrary matrix $A=\begin{pmatrix}
        a&c\\b&d
    \end{pmatrix}$, its characteristic polynomial is given by $$P_{A}(x)=x^{2}-\operatorname{tr}(A)x+\operatorname{det}(A).$$ By the Cayley-Hamilton theorem, $0=P_{A}(A)$. Therefore, 
    \[0=A^{2}-\operatorname{tr}(A)A+\operatorname{det}(A)I_{2}\overset{A^{2}=A}{=}(1-\operatorname{tr}(A))A+\operatorname{det}(A)I_{2}.\] For the above expression to hold true regardless of the matrix $A$ chosen, it is sufficient to require that $\operatorname{tr}(A)=a+d=1$ and $\operatorname{det}(A)=ad-bc=0$, obtaining the required result.
\end{proof}
\begin{lemma}\label{comm idemp}
    If $A,B\in\mathcal{M}_{2}(\mathbb{F})$ such that $A,B$ are idempotent and $AB=BA$, then $A=B$ or $B=I_{2}-A$.
\end{lemma}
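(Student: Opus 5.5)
The natural route is simultaneous diagonalization. An idempotent matrix $A$ satisfies $A^{2}-A=0$. Its minimal polynomial therefore divides $x(x-1)$, which has distinct roots in $\mathbb{F}$, so $A$ is diagonalizable over $\mathbb{F}$ with eigenvalues in $\{0,1\}$. The same holds for $B$. Since $AB=BA$, each eigenspace of $A$ is $B$-invariant, and $B$ restricted to it is again idempotent, hence diagonalizable. So there is an invertible $P\in\mathcal{M}_{2}(\mathbb{F})$ with
\[
A=P\operatorname{diag}(a_{1},a_{2})P^{-1},\qquad B=P\operatorname{diag}(b_{1},b_{2})P^{-1},\qquad a_{i},b_{i}\in\{0,1\}.
\]
This reduces the question to comparing the pairs $(a_{1},a_{2})$ and $(b_{1},b_{2})$ in $\{0,1\}^{2}$. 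Alternatively, one could work directly with the parametrization $A=\begin{pmatrix} a & c\\ b & 1-a\end{pmatrix}$, $a(1-a)=bc$, from Lemma~\ref{idemp 2x2}, and expand $AB=BA$ entrywise. I would only use that as a cross-check, since it does not cover the idempotents $0$ and $I_{2}$, which have trace $0$ and $2$.

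The case analysis is the crux, and it shows the statement cannot hold verbatim. Take $A=\operatorname{diag}(1,0)$ and $B=0$. Both are idempotent, they commute, $A\neq B$, and $I_{2}-A=\operatorname{diag}(0,1)\neq B$. So the lemma needs a non-degeneracy hypothesis. The natural one, and surely the one intended in the setting of Lemma~\ref{idemp 2x2} where idempotents have trace $1$, is that $A$ and $B$ both have rank one, i.e.\ $A,B\notin\{0,I_{2}\}$. I would state and prove that version and list the degenerate cases separately.

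Under the rank-one hypothesis we have $(a_{1},a_{2}),(b_{1},b_{2})\in\{(1,0),(0,1)\}$. If the pairs coincide, then $A=B$. If they differ, then $\operatorname{diag}(b_{1},b_{2})=I_{2}-\operatorname{diag}(a_{1},a_{2})$, and conjugating by $P$ gives $B=I_{2}-A$. In the degenerate cases, where $A$ or $B$ lies in $\{0,I_{2}\}$, any idempotent commutes with it. The classification used in Section~\ref{ssec.2-dim} must then also include the pairs $(A,0)$, $(A,I_{2})$, $(0,B)$ and $(I_{2},B)$ with the other matrix an arbitrary idempotent. The main obstacle is therefore not the computation but fixing the correct hypothesis, so that the subsequent enumeration of homothetic data on the algebra (2E) stays complete.
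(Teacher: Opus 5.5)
Your counterexample is valid: $A=\operatorname{diag}(1,0)$ and $B=0$ are commuting idempotents with $B\neq A$ and $B\neq I_{2}-A$. So the lemma is false as written, and your rank-one version, proved by simultaneous diagonalization, is correct.

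Your route is the same as the paper's. The paper also passes to a common eigenbasis, but it forces the diagonal forms to be $\operatorname{diag}(1,0)$ or $\operatorname{diag}(0,1)$ by citing $\operatorname{tr}(A)=\operatorname{tr}(B)=1$ from Lemma~\ref{idemp 2x2}. That lemma's ``only if'' direction is itself wrong. From $(1-\operatorname{tr}(A))A+\det(A)I_{2}=0$ one may conclude $\operatorname{tr}(A)=1$ and $\det(A)=0$ only when $A$ is not scalar, and the scalar idempotents $0$ and $I_{2}$ have traces $0$ and $2$. So the trace-one (equivalently, rank-one) hypothesis you added is exactly the one the paper uses tacitly, and its proof breaks precisely at that step.

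Your warning about the (2E) classification is also justified. There a homothetic datum is a pair of commuting idempotents $(\overleftarrow{\sigma},\overrightarrow{\sigma})$ with $\overleftarrow{\sigma}(s)=\overrightarrow{\sigma}(s)$. The paper's two families therefore omit, for example:
\begin{itemize}
\item $\sigma=(0,0)$ and $\sigma=(I_{2},I_{2})$ with arbitrary $s$;
\item $\sigma=(0,I_{2})$ and $\sigma=(I_{2},0)$ with $s=0$;
\item the mixed pairs $(0,E)$ and $(E,0)$ with $s\in\ker E$, and $(I_{2},E)$ and $(E,I_{2})$ with $s\in\operatorname{im}E$, for $E$ of rank one.
\end{itemize}
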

\begin{proof}
    If $A^{2}=A$, then its minimal polynomial $\mu_{A}(x)$ divides $x(x-1)$. Therefore, $A$ is diagonalizable and its eigenvalues are $0$ and $1$ since $\operatorname{tr}(A)=1$ as we proved in Lemma \ref{idemp 2x2}. The same occurs for $B$.

    A classical well-known result asserts that if $A$ and $B$ are commuting diagonalizable matrices (as in this case), then there exists a common basis in which they are diagonal. Hence, in such a basis, let us assume that $A=\operatorname{diag}(1,0)$. Now, $B$ must also be diagonal on that same basis, say $B=\operatorname{diag}(\alpha,\beta)$. Since $\operatorname{tr}(B)=1=\alpha+\beta$, we have only two possibilities for the matrix $B$:
    \begin{itemize}
        \item $B=\operatorname{diag}(1,0)$, in which case $A=B$,
        \item or $B=\operatorname{diag}(0,1)$, in which case $A=I_{2}-B$.\qedhere
    \end{itemize}
\end{proof}

Therefere, as a consequence of the previous lemmas, we obtain that there are two possible families of homothetic data on $P$, as it is asserted in the following result.
\begin{proposition}
    Let $(\sigma,s)$ be a homothetic datum on $P$. Then, $(\sigma,s)$ belongs to one of the following families:
    \begin{itemize}
        \item $\overleftarrow{\sigma}=\begin{pmatrix}
            \sigma_{1}&\sigma_{3}\\\sigma_{2}&1-\sigma_{1}
        \end{pmatrix}$, $\overrightarrow{\sigma}=I_{2}-\overleftarrow{\sigma}=\begin{pmatrix}
            1-\sigma_{1} &-\sigma_{3}\\-\sigma_{2}&\sigma_{1}
        \end{pmatrix}$ such that $\sigma_{1}(1-\sigma_{1})=\sigma_{2}\sigma_{3}$ and $s=0$. 

        \item $\overleftarrow{\sigma}=\overrightarrow{\sigma}=\begin{pmatrix}
            \sigma_{1}&\sigma_{3}\\\sigma_{2}&1-\sigma_{1}
        \end{pmatrix}$ such that $\sigma_{1}(1-\sigma_{1})=\sigma_{2}\sigma_{3}$, with no conditions on $s$.
    \end{itemize}
\end{proposition}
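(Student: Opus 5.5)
Since the associative product of (2E) vanishes, I would first translate the axioms of a homothetic datum (Definitions~\ref{Def: dp} and \ref{Def: hd}) into matrix conditions. With $-\cdot-=0$, every product $ab$, $(\sigma a)b$, $a(\sigma b)$, $\sigma(ab)$, etc.\ is zero. Hence the bimultiplication conditions \eqref{bimultiplication} hold for any pair of linear maps $(\overleftarrow{\sigma},\overrightarrow{\sigma})$. The remaining conditions must be read as identities between linear maps. The double homothetism condition $\sigma(a\sigma)=(\sigma a)\sigma$ says $\overrightarrow{\sigma}\circ\overleftarrow{\sigma}=\overleftarrow{\sigma}\circ\overrightarrow{\sigma}$, so the two matrices commute. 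The inner homothetism $\bar{s}$ is zero because it multiplies by $s$. So \eqref{homothetic datum}, read componentwise on the left and on the right, gives $\overleftarrow{\sigma}^{2}=\overleftarrow{\sigma}$ and $\overrightarrow{\sigma}^{2}=\overrightarrow{\sigma}$. Finally, \eqref{commuts} reads $\overrightarrow{\sigma}(s)=\overleftarrow{\sigma}(s)$, i.e.\ the two idempotents agree at the vector $s$. This is exactly the reformulation announced before Lemma~\ref{idemp 2x2}.

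Next I would apply Lemma~\ref{comm idemp} to the commuting idempotents $\overleftarrow{\sigma},\overrightarrow{\sigma}$. It gives either $\overrightarrow{\sigma}=\overleftarrow{\sigma}$ or $\overrightarrow{\sigma}=I_{2}-\overleftarrow{\sigma}$. One caveat: the proof of Lemma~\ref{comm idemp} uses $\operatorname{tr}=1$, so it tacitly excludes the idempotents $0$ and $I_{2}$. I would therefore treat those separately. If $\overleftarrow{\sigma}\in\{0,I_2\}$, then any idempotent commutes with it, and this case must be either absorbed or noted. I expect the intended reading, consistent with Lemma~\ref{idemp 2x2} as stated, is rank-one idempotents. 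I would nevertheless flag that the degenerate choices $0$, $I_2$ (and mixed pairs) need a separate check. This bookkeeping is the main obstacle. Lemma~\ref{idemp 2x2} as stated also only captures trace-one idempotents, so the proof should state that assumption explicitly or extend the list.

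In the rank-one case, Lemma~\ref{idemp 2x2} lets me write $\overleftarrow{\sigma}=\begin{pmatrix}\sigma_1&\sigma_3\\ \sigma_2&1-\sigma_1\end{pmatrix}$ with $\sigma_1(1-\sigma_1)=\sigma_2\sigma_3$. If $\overrightarrow{\sigma}=\overleftarrow{\sigma}$, then \eqref{commuts} is automatic and $s$ is arbitrary, which gives the second family. If $\overrightarrow{\sigma}=I_2-\overleftarrow{\sigma}$, then \eqref{commuts} becomes $(I_2-\overleftarrow{\sigma})s=\overleftarrow{\sigma}s$, i.e.\ $s=2\overleftarrow{\sigma}s$. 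Applying $\overleftarrow{\sigma}$ to both sides gives $\overleftarrow{\sigma}s=2\overleftarrow{\sigma}s$, so $\overleftarrow{\sigma}s=0$ and hence $s=0$ (in characteristic $\neq2$, fine over $\mathbb{C}$). This gives the first family.
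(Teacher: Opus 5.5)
Your reduction to commuting idempotents $\overleftarrow{\sigma},\overrightarrow{\sigma}$ with $\overleftarrow{\sigma}(s)=\overrightarrow{\sigma}(s)$ is exactly the paper's. When both maps have rank one, you follow the paper's route via Lemma~\ref{comm idemp}. To get $s=0$ when $\overrightarrow{\sigma}=I_2-\overleftarrow{\sigma}$, you apply $\overleftarrow{\sigma}$ to $s=2\overleftarrow{\sigma}s$. This is a coordinate-free version of the paper's computation $\det(2\overleftarrow{\sigma}-I_2)=4[\sigma_1(1-\sigma_1)-\sigma_2\sigma_3]-1=-1$. It needs no assumption on the characteristic: $\overleftarrow{\sigma}s=2\overleftarrow{\sigma}s$ forces $\overleftarrow{\sigma}s=0$ over any field.

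The caveat you leave open is not bookkeeping. The degenerate cases cannot be absorbed, and they make the statement as written false. Take $\sigma=(0,0)$ with arbitrary $s\in P$. The two maps commute, are idempotent and agree at $s$, and $\bar s=0$, so this is a homothetic datum. But $\operatorname{tr}\overleftarrow{\sigma}=0\neq1$, so it lies in neither family. The same happens for:
\begin{itemize}
\item $(I_2,I_2)$ with any $s$;
\item $(0,I_2)$ and $(I_2,0)$ with $s=0$;
\item $(0,E)$ and $(E,0)$, with $E$ a rank-one idempotent and $s\in\ker E$;
\item $(I_2,E)$ and $(E,I_2)$, with $s\in\operatorname{im}E$.
\end{itemize}

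These are precisely the cases the paper's lemmas drop. The Cayley--Hamilton argument in Lemma~\ref{idemp 2x2} only yields $(1-\operatorname{tr}A)A=-\det(A)I_2$, which also admits $A\in\{0,I_2\}$, so only the ``if'' direction is proved. Lemma~\ref{comm idemp} fails for $A=0$, $B=\operatorname{diag}(1,0)$. Compare the one-dimensional families (1C) and (1D), where the analogous data $\sigma=(0,0)$ and $(\mathrm{id},\mathrm{id})$ do appear for the zero product.

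So your argument proves the proposition only under the extra hypothesis that neither $\overleftarrow{\sigma}$ nor $\overrightarrow{\sigma}$ equals $0$ or $I_2$. Instead of guessing an ``intended reading'', you should exhibit the counterexample and complete the list with the families above. Your $s=0$ argument already covers $(0,I_2)$ and $(I_2,0)$, and the mixed cases follow at once from $\overleftarrow{\sigma}(s)=\overrightarrow{\sigma}(s)$.
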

\begin{comment}
   \begin{proof}
    Owing to the previous lemmas, the first family is already completely determined. Therefore, it suffices to prove that $s$ necessarily vanishes in the second family. Indeed, by a direct computation, using the explicit form of the matrices $\overleftarrow{\sigma}$ and $\overrightarrow{\sigma}$, it follows that $(s_{1},s_{2})$ must satisfy the following homogeneous system of linear equations:
    \[W\begin{pmatrix}
        s_{1}\\s_{2}
    \end{pmatrix}=\begin{pmatrix}
        0\\0
    \end{pmatrix},\quad W\coloneqq \begin{pmatrix}
        2\sigma_{1}-1&2\sigma_{3}\\2\sigma_{2}&1-2\sigma_{1}
    \end{pmatrix},\]
    whose unique solution is $s=0$ since the coefficient matrix is invertible, as shown below:
    \[\operatorname{det}(W)=4[\underbrace{\sigma_{1}(1-\sigma_{1})-\sigma_{2}\sigma_{3}}_{=0}]-1=-1\neq 0.\]
\end{proof} 
\end{comment}

Furthermore, according to Theorem \ref{main.th.poisson}, constructing a Poisson affgebra with fibre $P$ also requires the existence of linear endomorphisms $\lambda=\begin{pmatrix}
    \lambda_{1}&\lambda_{3}\\\lambda_{2}&\lambda_{4}
\end{pmatrix}$ and $\mu=\begin{pmatrix}
    \mu_{1}&\mu_{3}\\\mu_{2}&\mu_{4}
\end{pmatrix}$ giving rise to a generalized derivation of the Lie algebra $(P,[-,-])$, and derivations of the associative algebra $(P,\cdot)$. Since $-\cdot -=0$, the latter condition is automatically satisfied by every linear map. Therefore, it suffices to verify the conditions under which \eqref{gen_der} is satisfied. Exploiting the bilinearity of the Lie bracket and the linearity of $\lambda$ and $\mu$, one readily obtains that
\begin{center}
    $\lambda$ and $\mu$ satisfy \eqref{gen_der} $\iff$  $\lambda=\begin{pmatrix}
        \lambda_{1} & 0\\\lambda_{2}&\lambda_{4}
    \end{pmatrix}$, $\mu=\begin{pmatrix}
        0&0\\-\lambda_{2} & \lambda_{1}-\lambda_{4}
    \end{pmatrix}$.
\end{center}

As a consequence of the preceding analysis, we have obtained two families of admissible Poisson affgebras with fibre the Poisson algebra (2E):
\begin{itemize}
    \item[(FE.1)] $\paff(P;\sigma, 0;\mu,\lambda,t)$ with $\overleftarrow{\sigma}=\begin{pmatrix}
            \sigma_{1}&\sigma_{3}\\\sigma_{2}&1-\sigma_{1}
        \end{pmatrix}$, $\overrightarrow{\sigma}=I_{2}-\overleftarrow{\sigma}=\begin{pmatrix}
            1-\sigma_{1} &-\sigma_{3}\\-\sigma_{2}&\sigma_{1}
        \end{pmatrix}$ such that $\sigma_{1}(1-\sigma_{1})=\sigma_{2}\sigma_{3}$.
            \item[(FE.2)] $\paff(P;\sigma, s;\mu,\lambda,t)$ with $\overleftarrow{\sigma}=\overrightarrow{\sigma}=\begin{pmatrix}
            \sigma_{1}&\sigma_{3}\\\sigma_{2}&1-\sigma_{1}
        \end{pmatrix}$ such that $\sigma_{1}(1-\sigma_{1})=\sigma_{2}\sigma_{3}$.
\end{itemize}

For illustrative purposes, we carry out the classification up to isomorphisms for the family (FE.1), although an entirely analogous argument applies to the family (FE.2). In view of the characterization of isomorphisms established in Corollary \ref{iso.Poisson.aff} and the fact that $\rm{ad}_{q}=\begin{pmatrix}
    0&0\\q_{2}&-q_{1}
\end{pmatrix}$, if $\psi=\begin{pmatrix}
    1&0\\
    \delta&\theta
\end{pmatrix}\in\operatorname{Aut}(P)$ with inverse $\psi^{-1}=\theta^{-1}\begin{pmatrix}
    \theta&0\\-\delta&1
\end{pmatrix}$, then equations \eqref{iso_d}, \eqref{iso_e}, and \eqref{iso_f} imply that
\begin{gather}
  \label{tprime2general}  t'=\begin{pmatrix}
        t_{1}+(1-\lambda_{1})q_{1}\\\delta[t_{1}+(1-\lambda_{1})q_{1}]+\theta[t_{2}+(1-\lambda_{1})q_{2}]
    \end{pmatrix},\\
    \nonumber \mu'=\begin{pmatrix}
        0&0\\-\theta(\lambda_{2}+q_{2})-\delta(\lambda_{1}-\lambda_{4}+q_{1})&\lambda_{1}-\lambda_{4}+q_{1}
    \end{pmatrix},\quad\lambda'=\begin{pmatrix}
        \lambda_{1}&0\\\delta(\lambda_{1}-\lambda_{4}+q_{1})+\theta(\lambda_{2}+q_{2})&\lambda_{4}-q_{1}
    \end{pmatrix}.
\end{gather}
Inspecting the above expressions, we see that, by taking $q_{1}=\lambda_{4}-\lambda_{1}$ and $q_{2}=-\lambda_{2}$, it is obtained that $\mu'=0$ and $\lambda'=\operatorname{diag}(\lambda_{1},\lambda_{1}).$

Concerning the associative part, and taking into account that $-\cdot -=0$, equations \eqref{iso_a}, \eqref{iso_b} and \eqref{iso_c} yield
\begin{gather}
    \nonumber
    s'=\psi(s)\overset{s=0}{=}0,\\
    \label{sigmaleftp2gen}
    \overleftarrow{\sigma'}=\psi\overleftarrow{\sigma}\psi^{-1}=\frac{1}{\theta}\begin{pmatrix}
        \theta\sigma_{1}-\delta\sigma_{3}&\sigma_{3}\\\theta\delta(2\sigma_{1}-1)+\theta^{2}\sigma_{2}-\delta^{2}\sigma_{3}&\delta\sigma_{3}+\theta(1-\sigma_{1})
    \end{pmatrix},\\
    \label{sigmarightp2gen}
    \overrightarrow{\sigma'}=\psi\overrightarrow{\sigma}\psi^{-1}=\frac{1}{\theta}\begin{pmatrix}
        \theta(1-\sigma_{1})+\delta\sigma_{3}&-\sigma_{3}\\
        \theta\delta(1-2\sigma_{1})+\delta^{2}\sigma_{3}-\theta^{2}\sigma_{2}&\theta\sigma_{1}-\delta\sigma_{3}
    \end{pmatrix}.
\end{gather}

Then, assuming that $\sigma_{3}\neq 0$, one can set $\delta=\theta\sigma_{1}\sigma_{3}^{-1}$. Consequently, \eqref{sigmaleftp2gen} and \eqref{sigmarightp2gen} become 
\begin{gather*}
    %\label{sigmaleftp2gen2}
    \overleftarrow{\sigma'}=\begin{pmatrix}
        0&\frac{\sigma_{3}}{\theta}\\\frac{\theta}{\sigma_{3}}\left[ \sigma_{2}\sigma_{3}-\sigma_{1}(1-\sigma_{1})\right]&1
    \end{pmatrix}\overset{\sigma_{1}(1-\sigma_{1})=\sigma_{2}\sigma_{3}}{=}\begin{pmatrix}
        0&\frac{\sigma_{3}}{\theta}\\0&1
    \end{pmatrix},
    \\
    \overrightarrow{\sigma'}=\begin{pmatrix}
        1&-\frac{\sigma_{3}}{\theta}\\-\frac{\theta}{\sigma_{3}}\left[\sigma_{2}\sigma_{3}-\sigma_{1}(1-\sigma_{1})\right]&0
    \end{pmatrix}\overset{\sigma_{1}(1-\sigma_{1})=\sigma_{2}\sigma_{3}}{=}\begin{pmatrix}
        1&-\frac{\sigma_{3}}{\theta}\\0&0
    \end{pmatrix}.
\end{gather*}
So, setting $\theta=\sigma_{3}$, it leads to $\overleftarrow{\sigma'}=\begin{pmatrix}
    0&1\\0&1
\end{pmatrix}$ and $\overrightarrow{\sigma'}= \begin{pmatrix}
    1&-1\\0&0
\end{pmatrix}$. Upon substituting the corresponding values of $q_{1}$, $q_{2}$, $\theta$ and $\delta$ into \eqref{tprime2general}, one finds that 
\begin{equation}\label{t2primegeneral2}
    t'=\begin{pmatrix}
        t_{1}+(1-\lambda_{1})(\lambda_{4}-\lambda_{1})\\
       \sigma_{1}t_{1}+\sigma_{3}t_{2}+(1-\lambda_{1})[\sigma_{1}(\lambda_{4}-\lambda_{1})-\sigma_{3}\lambda_{2}] 
    \end{pmatrix}.
\end{equation}

Now, in case that $\sigma_{3}=0$, by the condition $\sigma_{1}(1-\sigma_{1})=\sigma_{2}\sigma_{3}=0$, we get that $\sigma_{1}=0$ or $\sigma_{1}=1$. At first, we consider the case $\sigma_{1}=0$. Substituting the corresponding values into \eqref{sigmaleftp2gen} and \eqref{sigmarightp2gen}, we find that
\begin{gather*}
    \overleftarrow{\sigma'}=\begin{pmatrix}
        0&0\\\theta\sigma_{2}-\delta&1
    \end{pmatrix},\qquad\overrightarrow{\sigma'}=\begin{pmatrix}
        1&0\\\delta-\theta\sigma_{2}&0
    \end{pmatrix}.
\end{gather*}
Hence, setting $\delta=\theta\sigma_{2}$, we obtain that $\overleftarrow{\sigma'}=\begin{pmatrix}
    0&0\\0&1
\end{pmatrix}$, $\overrightarrow{\sigma'}=\begin{pmatrix}
    1&0\\0&0
\end{pmatrix}$ and $t'=\begin{pmatrix}
    t_{1}+(1-\lambda_{1})(\lambda_{4}-\lambda_{1})\\\theta\gamma
\end{pmatrix}$, where $\gamma\coloneqq \sigma_{2}t_{1}+t_{2}+(1-\lambda_{1})[\sigma_{2}(\lambda_{4}-\lambda_{1})-\lambda_{2}]$. Therefore, if $\gamma=0$, then $t'=\begin{pmatrix}
    t_{1}+(1-\lambda_{1})(\lambda_{4}-\lambda_{1})\\0
\end{pmatrix}$; however, if $\gamma\neq 0$ and $\theta=\gamma^{-1}$, then $t'=\begin{pmatrix}
    t_{1}+(1-\lambda_{1})(\lambda_{4}-\lambda_{1})\\1
\end{pmatrix}$.

It remains to consider the final case: $\sigma_{3}=0$ and $\sigma_{1}=1$. Substituting these values into \eqref{sigmaleftp2gen} and \eqref{sigmarightp2gen}, it is obtained that
\[\overleftarrow{\sigma'}=\begin{pmatrix}
    1&0\\\delta+\theta\sigma_{2}&0
\end{pmatrix},\qquad \overrightarrow{\sigma'}=\begin{pmatrix}
    0&0\\-\delta-\theta\sigma_{2}&1
\end{pmatrix}.\]
Thus, by taking $\delta=-\theta\sigma_{2}$, $\overleftarrow{\sigma'}=\begin{pmatrix}
    1&0\\0&0
\end{pmatrix}$ and $\overrightarrow{\sigma'}=\begin{pmatrix}
    0&0\\0&1
\end{pmatrix}$ with $t'=\begin{pmatrix}
    t_{1}+(1-\lambda_{1})(\lambda_{4}-\lambda_{1})\\\theta\gamma'
\end{pmatrix}$, where $\gamma'\coloneqq t_{2}-\sigma_{2}t_{1}+(\lambda_{1}-1)[\sigma_{2}(\lambda_{1}-\lambda_{4})+\lambda_{2}]$. Consequently, when $\gamma'=0$, $t'=\begin{pmatrix}
    t_{1}+(1-\lambda_{1})(\lambda_{4}-\lambda_{1})\\0
\end{pmatrix}$; although, if $\gamma'\neq 0$, then $t'=\begin{pmatrix}
    t_{1}+(1-\lambda_{1})(\lambda_{4}-\lambda_{1})\\1
\end{pmatrix}$ setting $\theta=(\gamma')^{-1}$.

The possible isomorphism classes are summarized below:
{\footnotesize
\[
\rotatebox{90}{$\begin{array}{c}\hspace{-1cm}\textnormal{(FE.1)}\\\hspace{-1cm}\paff(P;\sigma, 0;\mu,\lambda,t)\end{array}$}\simeq \left\{ \begin{array}{l c c}
    \paff\left(P;\overleftarrow{\sigma'}=\begin{pmatrix}
    0&1\\0&1
\end{pmatrix},\overrightarrow{\sigma'}=\begin{pmatrix}
    1&-1\\0&0
\end{pmatrix}, s'=0;\mu',\lambda',t'=\eqref{t2primegeneral2}\right)&\textnormal{if} &\sigma_{3}\neq 0,\vspace{.1cm}\\

\paff\left(P;\overleftarrow{\sigma'}=\begin{pmatrix}
    0&0\\0&1
\end{pmatrix},\overrightarrow{\sigma'}=\begin{pmatrix}
    1&0\\0&0
\end{pmatrix}, s'=0;\mu',\lambda',t'=\begin{pmatrix}
    t_{1}+(1-\lambda_{1})(\lambda_{4}-\lambda_{1})\\0
\end{pmatrix}\right)&\textnormal{if} &\begin{array}{c}\sigma_{3}= 0=\sigma_{1},\\\gamma=0,\end{array}\vspace{.1cm}\\

\paff\left(P;\overleftarrow{\sigma'}=\begin{pmatrix}
    0&0\\0&1
\end{pmatrix},\overrightarrow{\sigma'}=\begin{pmatrix}
    1&0\\0&0
\end{pmatrix}, s'=0;\mu',\lambda',t'=\begin{pmatrix}
    t_{1}+(1-\lambda_{1})(\lambda_{4}-\lambda_{1})\\1
\end{pmatrix}\right)&\textnormal{if} &\begin{array}{c}\sigma_{3}= 0=\sigma_{1},\\\gamma\neq0,\end{array}\vspace{.1cm}\\

\paff\left(P;\overleftarrow{\sigma'}=\begin{pmatrix}
    1&0\\0&0
\end{pmatrix},\overrightarrow{\sigma'}=\begin{pmatrix}
    0&0\\0&1
\end{pmatrix}, s'=0;\mu',\lambda',t'=\begin{pmatrix}
    t_{1}+(1-\lambda_{1})(\lambda_{4}-\lambda_{1})\\0
\end{pmatrix}\right)&\textnormal{if} &\begin{array}{c}\sigma_{3}= 0;\,\sigma_{1}=1,\\\gamma'=0,\end{array}\vspace{.1cm}\\

\paff\left(P;\overleftarrow{\sigma'}=\begin{pmatrix}
    1&0\\0&0
\end{pmatrix},\overrightarrow{\sigma'}=\begin{pmatrix}
    0&0\\0&1
\end{pmatrix}, s'=0;\mu',\lambda',t'=\begin{pmatrix}
    t_{1}+(1-\lambda_{1})(\lambda_{4}-\lambda_{1})\\1
\end{pmatrix}\right)&\textnormal{if} &\begin{array}{c}\sigma_{3}= 0;\,\sigma_{1}=1,\\\gamma'\neq0,\end{array}
\end{array}\right.
\]
}
where $\mu'=0$ and $\lambda'=\operatorname{diag}(\lambda_{1},\lambda_{1}).$

\section*{Funding}
The research  of Tomasz Brzezi\'nski and Krzysztof Radziszewski is partially supported by the National Science Centre, Poland, through the WEAVE-UNISONO (Grant no.\ 2023/05/Y/ST1/00046). 

The research of Brais Ramos Pérez is partially supported by Ministerio de Ciencia e Innovación, Spain (Grant no. PID2024-155502NB-I00) and Xunta de Galicia (Grant no. ED431C 2023/31 and ED481A-2023-023).

\bibliographystyle{amsalpha}

\end{document}